\newtheorem{theorem}{Theorem}[section]
\newtheorem{proposition}[theorem]{Proposition}
\newtheorem{lemma}[theorem]{Lemma}
\newtheorem{dfn}[theorem]{Definition}
\newtheorem{definition}[theorem]{Definition}
\newtheorem{rem}[theorem]{Remark}
\def\Z{\ensuremath{\mathbb Z}}
\def\C{\ensuremath{\mathbb C}}
\def\P{\ensuremath{\mathbb P}}
\def\R{\ensuremath{\mathbb R}}
\begin{document}
\baselineskip=15pt

\title[Quasi-parabolic Higgs bundles and null hyperpolygon spaces]{Quasi-parabolic Higgs bundles and null hyperpolygon spaces}

\author[L. Godinho]{Leonor Godinho}

\address{Departamento de Matem\'atica, Centro de An\'alise Matem\'atica, Geometria e Sistemas din\^a\-micos, Instituto Superior
T\'ecnico, Universidade de Lisboa, Av. Rovisco Pais, 1049-001 Lisbon, Portugal}

\email{lgodin@math.tecnico.ulisboa.pt}

\author[A. Mandini]{Alessia Mandini}

\address{Universidade Federal Fluminense, IME-GMA, Niter\'oi, RJ, Brazil}

\email{alessia\_mandini@id.uff.br}

%\email{alessia_mandini@id.uff.bf}

\subjclass[2000]{14D20, 14H60, 53C26, 53D20}

\keywords{Polygons, Minkowski space, quasi-parabolic bundle, Higgs field, hyperpolygons}
\thanks{}
\date{}

\begin{abstract} We introduce the moduli space of quasi-parabolic $SL(2,\C)$-Higgs bundles over a compact Riemann surface $\Sigma$  and consider a natural involution, studying its  fixed point locus when $\Sigma$ is $\C \P^1$ and establishing an identification with a moduli space of null polygons in Minkowski $3$-space. 
\end{abstract}

\maketitle

\section{Introduction}

Moduli spaces of Higgs bundles over a compact Riemann surface $\Sigma$ were introduced by Hitchin in \cite{Hi1} and have innumerable applications in many areas of mathematics and mathematical physics.  In particular, Hitchin establishes  in \cite{Hi1} a correspondence between isomorphism classes of rank-$2$  $SL(2,\C)$-Higgs bundles over $\Sigma$ and the moduli space of representations of the fundamental group of $\Sigma$ in $SL(2,\C)$. Since then these spaces have been generalized in several different ways. 

Simpson in \cite{Si1,Si2} extended these spaces to higher dimensions and, in another direction, to Higgs bundles on punctured Riemann surfaces, introducing the definition of parabolic Higgs bundles. Here he established a correspondence between isomorphism classes of these bundles and representations of the fundamental group of the punctured surface with fixed holonomy around the punctures.

On the other hand, in \cite{Hi2}, Hitchin replaced $SL(2,\C)$ by any real reductive group $G$, introducing the notion of a $G$-Higgs bundle. 
In particular, he showed  that, when $G^\C$ is a complex semisimple Lie group and $G$ is the split real form of $G^\C$, the moduli space of representations of the fundamental group of $\Sigma$ in $G$ has at least one connected component homeomorphic to a Euclidean space (often referred to as Hitchin components). 
%When $G=SL(2,\R)$ this component can be identified with the Teichm\"{u}ller space. 
The definition of $G$-Higgs bundles was further extended in \cite{BGM} to include parabolic $G$-Higgs bundles. 

In \cite{G-P} Garc\'ia-Prada considers involutions in the moduli space of $SL(n,\C)$-Higgs bundles and studies their fixed point sets. For example, some of the connected components of the sets obtained coincide with the moduli space of  $SL(n,\C)$-bundles and others are formed by $SL(n,\R)$-Higgs bundles. The study of involutions and their fixed point sets was further pursued in \cite{BGH}.

The fixed point sets of involutions on  moduli spaces of $G$-Higgs bundles have attracted much attention recently as they are a source of branes \cite{KW}. Indeed, as explained in \cite{HT} and \cite{DP}, the moduli space of $G$-Higgs bundles for a semisimple group $G$ and the moduli space of ${}^LG$-Higgs bundles, for  the Langlands dual ${}^LG$ of $G$, are mirror manifolds in the sense of  Strominger,  Yau and Zaslow \cite{SYZ} and the fixed point sets of involutions on these spaces are branes, i.e. special subvarieties  which can be of one of two types: $A$-branes (which are Lagrangian subvarieties) and $B$-branes  (which are complex subvarieties).
Since the moduli space of $G$-Higgs bundles is hyperk\"{a}hler it admits three complex structures\footnote{It admits a complex structure $I$ coming from the complex structure on the Riemann surface $\Sigma$, a complex structure $J$ coming from the one on the group $G^\C$ and a third one $K=I J$. The three satisfy the quaternionic equations and are compatible with a common metric.}, so it is possible to have branes that are simultaneously of type $A$ and $B$ with respect to different complex structures.

The study of involutions on the moduli space of parabolic $SL(2,\C)$-Higgs bundles on $\C P^1$ was introduced in \cite{BFGM}, where the natural  involution
\begin{equation}\label{eq:inv_0}
(E,\Phi)\to (E,-\Phi)
\end{equation}
was considered. It is holomorphic with respect to the complex structure $I$ arising from the complex structure on $\C \P^1$ and anti-holomorphic with respect to $J$ and $K$.   Using the isomorphism constructed in \cite{GM} between the moduli space of hyperpolygons (a hyperk\"{a}hler quotient of $T^* \C^{2n}$) and the moduli space of parabolic $SL(2,\C)$-Higgs bundles over $\C \P^1$ with trivial underlying vector bundle $E$, the authors study the fixed point set of \eqref{eq:inv_0} (formed by $(B,A,A)$-branes). In particular, it is shown that the non-compact  components of this manifold  correspond to $SL(2,\R)$-representations of the fundamental group of the punctured sphere (cf. \cite{BGM}) and can be identified with moduli spaces of polygons in Minkowski $3$-space with edges of fixed Minkowski lengths, with some of the edges lying in future pseudospheres and the others in past pseudospheres. This provided a nice geometrical interpretation of these $(B,A,A)$-branes in terms of moduli spaces of another related problem. 

Mirror symmetry is often associated to wall-crossing behavior. As the Langland dual of  $SL(2,\C)$ is $PGL(2,C)=SL(2,\C)/\mathbb{Z}_2$, the moduli space $H(\alpha)$ of  $SL(2,\C)$-parabolic Higgs bundles over $\C \P^1$ with parabolic weight vector $\alpha$ can be identified with its mirror.\footnote{In general, the mirror $\hat{H}(\alpha)$ of the moduli space $H(\alpha)$ of parabolic $SL(2,\C)$-Higgs bundles over a Riemann surface of genus $g$ is  $H(\alpha)/\mathbb{Z}_2^{2g}$.} In this setting, one can think of the mirror of  $\hat{H}(\alpha)$ as the moduli space $\hat{H}(-\alpha)$ of  $SL(2,\C)$-Higgs bundles over $\C \P^1$ where the parabolic weights are interchanged and the stability condition is changed accordingly.  When passing from one space to its mirror, one should expect  a wall crossing with  an intermediate step where one considers a new moduli space of $G$-Higgs bundles: the \emph{moduli space of quasi-parabolic $SL(2,\C)$-Higgs bundles over $\C \P^1$}. Quasi-parabolic  bundles were introduced by Mukai in \cite{M} and by Mehta and Seshadri in \cite{MS}. Here we generalize their definition to quasi-parabolic Higgs bundles with an appropriate definition of stability. One should think of these spaces as  limit spaces of parabolic Higgs bundles where  the two parabolic weights at each parabolic point are allowed to coincide.

%   Under Langlands duality the mirrors of these $(B,A,A)$-branes should be $(B,B,B)$-branes \cite{KW}. When passing from one type of brane to its mirror, one should expect an intermediate step where one considers a new moduli space of $G$-Higgs bundles: the \emph{moduli space of quasi-parabolic $SL(2,\C)$-Higgs bundles over $\C \P^1$}. Quasi-parabolic  bundles were introduced by Mukai in \cite{M} and by Mehta and Seshadri \cite{MS}. Here we generalize their definition to quasi-parabolic Higgs bundles with an appropriate definition of stability. One should see these spaces as  limit spaces of parabolic Higgs bundles where  the two parabolic weights at each parabolic point are allowed to coincide. 

In this space we consider the holomorphic involution
\begin{equation}\label{eq:inv0}
(E, \Phi) \,\longmapsto\, (E^*,\Phi^t)
\end{equation}
and give a detailed description of its fixed point set when restricted to the (generic) case when the underlying rank-$2$ vector bundle over $\C \P^1$ is trivial. We obtain $2^{n-1}-(n+1)$ connected components (where $n$ is the number of quasi-parabolic points) 
composed of quasi-parabolic $SL(2,\C)$-Higgs bundles that admit a direct sum decomposition $E=L_0\oplus L_1$ where $L_0$ and $L_1$ are trivial bundles over $\C \P^1$ and the $1$-dimensional flag components of $E$ coincide with the fibers of $L_0$ over some of the quasi-parabolic points and with the fibers of $L_1$ over the others.

To obtain this characterization of the involution fixed point set we establish a correspondence between classes of quasi-parabolic stable Higgs bundles over $\C \P^1$ and regular points of  a special singular hyperk\"{a}hler quotient of $T^* \C^{2n}$ by
$$
K \,:= \, \Big({\rm SU}(2) \times 
{\rm U}(1)^n\Big)/ (\Z/2\Z),
$$
here called \emph{null hyperpolygons}.

We further establish a correspondence between the components of the fixed point set of \eqref{eq:inv0} and moduli spaces of null polygons in Minkowski $3$-space, consisting of $SU(1,1)$-equivalence classes of closed polygons  in $\R^{2,1}$ (i.e. $\R^3$ equipped with the Minkowski inner 
product 
$$
v \circ w \,=\, -x_1x_2-y_1y_2+t_1t_2\, ,
$$
for $v=(x_1,y_1,t_1)$ and $w\,=\,(x_2,y_2,t_2)$) with some of the edges in the future light cone and the others in the past. This gives a geometrical interpretation of the $(B,A,A)$-branes obtained in terms of moduli spaces in a related space.

The paper is organized as follows. Null hyperpolygons are defined in Section~\ref{Nullhyperpolygons} as the set of regular points of a singular hyperk\"{a}hler quotient of $T^* \C^{2n}$ by the group $K$ and as a GIT quotient by $K^{\C}$. In Section~\ref{QPHBs} we define quasi-parabolic Higgs bundles and establish an isomorphism between the moduli space of null hyperpolygons and the moduli space $\mathcal{H}^n_0$ of quasi-parabolic $SL(2,\C)$-Higgs bundles over $\C \P^1$ at a divisor $D$ consisting of  $n$ marked points, where the underlying vector bundle is holomorphically trivial. In Section~\ref{sec:inv} we consider the involution in \eqref{eq:inv0} and study its fixed-point set, showing that it is formed by quasi-parabolic $SL(2,\R)$-Higgs bundles. For that we use the isomorphism of Section~\ref{QPHBs}. We then define the spaces of null polygons in Minkowski $3$-space in Section~\ref{sec:NullMink} and, in Section~\ref{sec:Mink}, we show that the connected components of the fixed point set of the involution \eqref{eq:inv0} can be identified with null polygon  spaces in $\R^{2,1}$. Finally, in Section~\ref{sec:ex}, we study the example of quasi-parabolic $SL(2,\C)$-Higgs bundles over  $\C \P^1$ with four quasi-parabolic points.

\noindent{\bf Acknowledgements}
The authors would like to thank the referee for many valuable suggestions and remarks. Both authors were partially supported by FCT/Portugal through project 
PTDC/MAT-PUR/29447/2017.

\medskip
\noindent
%\textbf{Acknowledgements.}\, We thank O. Garc\'{i}a-Prada for suggesting the study
%of the hyperpolygon description of the fixed-point set of
%the natural involution of the moduli space of parabolic Higgs bundles and the referees for helpful comments.

\section{Null Hyperpolygon spaces} \label{Nullhyperpolygons}

In this section we extend the definition of hyperpolygon spaces introduced in \cite{K2}.  

%Then, in Section~\ref{ss:circle action}, we introduce a circle action on these spaces and compute the corresponding isotropy weights.

As usual, let $n$ be a positive integer and let us consider a star-shaped quiver $\mathcal Q$ with vertex and arrow  sets parametrized respectively by $I\cup 
\{0\} $ and $I$, where
$$I:=\, \{1,\ldots,n\}.$$ 
Moreover, assume that,  for each $i\, \in\, I$, the corresponding 
arrow goes from $i$ to $0$. The representations of $\mathcal 
Q$ with $V_i \,=\, \C $ for $i \,\in\, I$ and $V_0\,=\, 
\C^2$ are parametrized by
$$
E(\mathcal Q, V)\,:=\, \bigoplus_{i\in I}
\textrm{Hom} (V_i, V_0) \,=\, \C^{2n}\, .
$$
Using the standard diagonal action of ${\rm U}(2)\times{\rm U}(1)^n$ on
$E(\mathcal Q, V)$, one obtains  a  hyper\--Hamiltonian action of ${\rm U}(2)\times{\rm 
U}(1)^n$ on the cotangent bundle $T^*E(\mathcal Q, V)\,=\, T^*\C^{2n}$ for the natural hyperk\"ahler structure on $T^*E(\mathcal Q, V)$ \cite{K2}. Note that $T^*E(\mathcal Q, V)$  can be  identified with the space of representations of the doubled quiver (the quiver with the same set of vertices, where we add an arrow in the opposite direction for every arrow of $\mathcal{Q}$).

The hyper\--Hamiltonian action of ${\rm U}(2)\times{\rm 
U}(1)^n$ is not effective since every point in $T^*E(\mathcal Q, V)$ is fixed by the diagonal circle
$$
\{(\lambda\cdot \text{Id}_{\C^{2}}, \lambda,\ldots ,\lambda):\, 
\lvert \lambda\rvert \,=\, 1\}\,\subset\, {\rm U}(2) \times {\rm U}(1)^n.
$$
Therefore one considers the 
quotient group
$$K \,:=\, \Big( {\rm U}(2) \times {\rm U}(1)^n\Big)/{\rm U}
(1)\,=\, \Big({\rm SU}(2) \times 
{\rm U}(1)^n\Big)/(\Z /2\Z)\, ,$$
where $\Z /2\Z$ acts by  multiplication of each factor  by $-1$.

%As $T^* \C^2 \,=\, (\C^{2})^* \times \C^2$ can be identified with the space of quaternions, 
%The cotangent bundle $T^*E(\mathcal Q, V) \,=\, T^* \C^{2n}$ has 
%a natural hyperk\"ahler structure \cite{K2, H}.
%The hyperk\"ahler quotient of $T^* \C^{2n}$ 
%by $K $ can be explicitly described as follows.
Let us  consider coordinates
 $(p,q)$ on $T^* \C^{2n}$, where $p=(p_1, \ldots, p_n)$ 
is the $n$-tuple of row vectors 
$$p_i =\left( \begin{array}{ll} a_i &
b_i\end{array}\right) \in (\C^2)^*$$ 
and 
$q= (q_1, \ldots, q_n)$ is the $n$-tuple of column vectors 
$q_i =\Big( \begin{array}{c}
c_i \\ d_i 
\end{array} \Big) \in \C^2$. Then,
 the  action of $K$ on $ T^*\C^{2n}$ is given by 
$$ (p,q) \cdot [A; e_1, \ldots, e_n]= 
\Big( (e_1^{-1}p_1 A, \ldots, e_n^{-1}p_n A), ( A^{-1} q_1 e_1, \ldots, A^{-1} 
q_n e_n ) \Big).$$
It  is hyper-Hamiltonian\footnote{For the symplectic forms $\omega_I$, $\omega_J$, $\omega_K$ associated to the standard triple of complex structures $I,J,K$ on  $T^*\C^{2n}$ that satisfy the quaternionic relations.} with hyperk\"ahler moment map
$$ \mu_{HK}:= \mu_{\R} \oplus \mu_{\C} : T^* \C^{2n} \longrightarrow 
\big(\mathfrak{su}(2)^* \oplus \R^n\big) \oplus \big(\mathfrak{sl}(2, 
\C)^* \oplus (\C^n)^*\big),\, $$
where $\mu_{\R}$, the real moment map,  is  
\begin{equation} \label{real} 
\mu_{\R} (p,q)\,=\,\frac{1}{2} \sum_{i=1}^n (q_i q_i^* -p_i^* p_i )_0 
\oplus \Big(-\frac{1}{2} (|q_1|^2 -|p_1|^2), \ldots, 
-\frac{1}{2} (|q_n|^2 -|p_n|^2) \Big)\, ,
\end{equation} 
with $(\,)_0$ representing the traceless part, and $\mu_{\C}$, the complex moment map,  is given by
\begin{equation} \label{complex}
\mu_{\C} (p,q)\,=\,-\sqrt{-1}  \sum_{i=1}^n (q_i p_i)_0 \oplus (\sqrt{-1}p_1 q_1, 
\ldots,\sqrt{-1} p_n q_n)\, .
\end{equation}

Let us consider the set
\begin{equation}\label{eq:P0n}
\mathcal{P}_0^n:=\left\{(p,q)\in \mu_{HK}^{-1}\left((0,0),(0,0)\right):\, \lvert p_i\rvert^2 + \lvert q_i\rvert^2 \neq 0, \,\, \forall i=1,\ldots,n \right\}.
\end{equation}

Note that an element $(p,q)\,\in\,T^*\C^{2n}$ is in $\mu_{\C}^{-1} (0,0)$ if 
and only if
\begin{equation}\label{eq:complex0}
p_i \, q_i\,=\,0 \quad \text{and} \quad \sum_{i=1}^n (q_i p_i)_0\, =\, 0,
\end{equation}
that is,  if and only if
\begin{equation}\label{complex1}
 a_i c_i + b_i d_i = 0
\end{equation}
and 
\begin{equation}\label{complex2}
\sum_{i=1}^n a_i c_i =0, \quad
\sum_{i=1}^n a_i d_i =0, \quad
\sum_{i=1}^n b_i c_i =0\, .
\end{equation}

Similarly, $(p,q)$ is in $\mu_{\R}^{-1} (0, 0) $ if and only if 
$$
\lvert q_i\rvert  = \lvert p_i\rvert \quad \text{and} \quad 
\sum_{i=1}^n \big(q_i q_i^* -p_i^* p_i \big)_0 \,=\,0\, ,
$$
i.e., if and only if
\begin{equation}\label{real1}
|c_i|^2 +|d_i|^2 =  |a_i|^2  + |b_i|^2 
\end{equation}
and 
\begin{equation}\label{real2}
\sum_{i=1}^n |c_i|^2 = \sum_{i=1}^n  |a_i|^2, \quad \sum_{i=1}^n |b_i|^2 =\sum_{i=1}^n  |d_i|^2,  \quad
\sum_{i=1}^n a_i \, \bar{b_i} - \bar{c_i}\, d_i \,=\,0\, .
\end{equation}

\
\begin{proposition}~\label{prop:p0}
The group $K$ acts freely on $\mathcal{P}_0^n$.
\end{proposition}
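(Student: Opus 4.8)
The plan is to compute the stabilizer of an arbitrary point $(p,q)\in\mathcal{P}_0^n$ and show it is trivial. First I would choose a representative $(A;e_1,\ldots,e_n)\in{\rm SU}(2)\times{\rm U}(1)^n$ of a stabilizing element of $K$ and unwind the fixed-point condition using the explicit formula for the $K$-action: since each $e_i$ is a scalar, $(p,q)\cdot[A;e_1,\ldots,e_n]=(p,q)$ is equivalent to
$$
p_i A \,=\, e_i\, p_i \qquad\text{and}\qquad A\, q_i \,=\, e_i\, q_i \qquad \text{for all } i\in I,
$$
so that $p_i$ is a left eigenvector and $q_i$ a right eigenvector of $A$, both with the \emph{same} eigenvalue $e_i$.

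Before using this, I would record the single input coming from membership in $\mathcal{P}_0^n$ that makes the argument work: since $(p,q)\in\mu_{\R}^{-1}(0)$, relation \eqref{real1} gives $|p_i|=|q_i|$, and combined with $|p_i|^2+|q_i|^2\neq 0$ this forces $p_i\neq 0$ \emph{and} $q_i\neq 0$ for every $i$. Hence each $p_i$ and each $q_i$ is a genuine nonzero eigenvector of $A$.

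The core is then a short linear-algebra observation in ${\rm SU}(2)$. Suppose $A\neq\pm\mathrm{Id}$; then $A$ has two distinct eigenvalues and $\C^2$ is the direct sum of the two one-dimensional eigenlines. A left eigenvector with eigenvalue $e$ necessarily annihilates the right eigenline for the other eigenvalue; so if moreover $p_i\, q_i = 0$ — which is exactly the complex moment map equation \eqref{complex1}, using that $q_i$ spans the $e_i$-eigenline — then $p_i$ annihilates all of $\C^2$, i.e. $p_i=0$, contradicting the previous step. Therefore $A=\pm\mathrm{Id}$. Finally, $A\,q_i=e_i q_i$ with $q_i\neq 0$ forces $e_i=1$ for all $i$ when $A=\mathrm{Id}$ and $e_i=-1$ for all $i$ when $A=-\mathrm{Id}$; in both cases $(A;e_1,\ldots,e_n)$ represents the identity of $K=\big({\rm SU}(2)\times{\rm U}(1)^n\big)/(\Z/2\Z)$, so the stabilizer is trivial.

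The argument is elementary and I do not expect a deep obstacle; the points needing care are the bookkeeping with the $\Z/2\Z$ quotient — making sure the branch $A=-\mathrm{Id}$ genuinely lands on the identity of $K$ rather than on a nontrivial element — and remembering that it is the \emph{real} moment map condition, not the complex one, that rules out $p_i=0$ or $q_i=0$, which is what lets the contradiction close.
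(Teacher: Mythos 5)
Your proof is correct and follows essentially the same route as the paper's: the fixed-point condition makes $p_i$ and $q_i$ (nonzero by \eqref{real1} together with $\lvert p_i\rvert^2+\lvert q_i\rvert^2\neq 0$) eigenvectors of $A$ with common eigenvalue $e_i$, the two-distinct-eigenvalue case is excluded via the moment map equations, and $A=\pm\mathrm{Id}$ with $e_i=\pm 1$ is the identity of $K$ after the $\Z/2\Z$ quotient. The only cosmetic difference is that at the contradiction step you invoke only $p_iq_i=0$ from \eqref{complex1} (having already secured $p_i,q_i\neq 0$), whereas the paper combines \eqref{complex1} with \eqref{real1} there; both close the argument.
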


\begin{proof} If
$$(p,q) \cdot [A; e_1, \ldots, e_n]= (p,q)$$
for some $[A; e_1, \ldots, e_n]\in K$ and $ (p,q)\in T^* \C^{2n}$, then
$$
e_i^{-1} \, p_i \,A = p_i \quad \text{and} \quad A^{-1} \, q_i \, e_i = q_i 
$$
for $i=1,\ldots, n$, and so
$$
A  \, p_i^*= e_i \, p_i^* \, \quad \text{and} \quad A\, q_i = e_i \,q_i
$$
for $i=1,\ldots, n$. 

Since $q_i \neq 0$ and $p_i\neq 0$ (as $\lvert p_i \rvert = \lvert q_i\rvert$ and  $\lvert p_i \rvert^2+\lvert q_i\rvert^2\neq 0$ on $\mathcal{P}_0^n$), we have that $q_i$ and $p_i^*$ are eigenvectors of $A$ with eigenvalue $e_i$. If $A$ has two different eigenvalues $\lambda$ and $\lambda^{-1}$, we may assume that 
$$
A=\left[\begin{array}{cc} \lambda & 0\\ 0 &  \lambda^{-1}  \end{array}\right]
$$
and so, if $S:=\{i\in I: \, e_i= \lambda\}$, we have that
$$
q_i=\left( \begin{array}{l} c_i \\ 0\end{array} \right), \quad p_i=\left( \begin{array}{ll} a_i & 0 \end{array} \right), \quad \text{for} \quad i\in S,
$$
and
$$
q_i=\left( \begin{array}{l} 0 \\ d_i\end{array} \right), \quad p_i=\left( \begin{array}{ll} 0 & b_i  \end{array} \right), \quad \text{for} \quad i\in S^c,
$$
for some $a_i,b_i,c_i,d_i\in \C\setminus\{0\}$.

By moment map conditions \eqref{complex1} and  \eqref{real1}, we conclude that $p_i=q_i=0$ for every $i\in I$, which is impossible in $\mathcal{P}_0^n$.

We conclude that $A\in SU(2)$ has only one eigenvalue and so, since $\det A=1$, we have $A=\pm \text{Id}$. Moreover, since, for every $i\in I$, we have that $e_i$ is an eigenvalue of $A$ (as $p_i,q_i\neq 0$ for every $i$), we conclude that 
$$
[A;e_1,\ldots, e_n]= [\text{Id};1,\ldots,1].
$$ 
\end{proof}

\begin{rem}
The points in $\mu_{HK}^{-1}\left((0,0),(0,0)\right)$ for which $p_i=q_i=0$ for $i$ in some subset $S\subsetneq \{1,\ldots, n\}$ are fixed by a subtorus of $K$ of dimension $\lvert S \rvert$. If $(p,q)=(0,0)$ then this point is fixed by $K$. 
\end{rem}

Since $K$ acts freely on  $\mathcal{P}_0^n$, we have that $0$ is a regular value of the restriction of $\mu_{HK}$ to $\mathcal{P}_0^n$ and so  $\mathcal{P}_0^n$ is a smooth manifold of dimension $5(n -3)$. Moreover, by Proposition~\ref{prop:p0}, we obtain the following result. 
\begin{proposition} For $n\geq 3$ the hyperk\"{a}hler quotient
\begin{equation}\label{def:nullhp}
X_0^n:=\mathcal{P}_0^n / K
\end{equation}
is a non-empty smooth  hyperk\"{a}hler manifold of dimension $4(n-3)$.
\end{proposition}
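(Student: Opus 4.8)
The plan is to derive the statement formally from Proposition~\ref{prop:p0} together with the standard hyperk\"ahler reduction package, the only point needing genuine work being non-emptiness. First I would note that $\mathcal{P}_0^n=\mu_{HK}^{-1}((0,0),(0,0))\cap\mathcal{U}$, where
\[
\mathcal{U}:=\bigl\{(p,q)\in T^*\C^{2n}:\ \lvert p_i\rvert^2+\lvert q_i\rvert^2\neq 0\ \text{for all}\ i\bigr\}
\]
is an open, $K$-invariant subset (each $\lvert p_i\rvert$ and $\lvert q_i\rvert$ is $K$-invariant, the $SU(2)$-factor acting unitarily and the $U(1)$-factors having modulus $1$). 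By Proposition~\ref{prop:p0} the $K$-action on $\mathcal{P}_0^n$ is free; as already recorded above, this forces $d(\mu_{HK})_m$ to be surjective for every $m\in\mathcal{P}_0^n$, so $0$ is a regular value of $\mu_{HK}|_{\mathcal{U}}$ and $\mathcal{P}_0^n$ is a smooth submanifold of $\mathcal{U}$ of real dimension $8n-3(n+3)=5n-9$. Since $K$ is compact the free $K$-action is also proper, hence $\mathcal{P}_0^n\to X_0^n$ is a principal $K$-bundle and $X_0^n$ is a smooth Hausdorff manifold of dimension $(5n-9)-(n+3)=4(n-3)$; and by the Hitchin--Karlhede--Lindstr\"om--Ro\v{c}ek theorem the hyperk\"ahler structure of $T^*\C^{2n}$ descends to $X_0^n$, its three symplectic forms being those induced by the restrictions of $\omega_I,\omega_J,\omega_K$ to $\mathcal{P}_0^n$.

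It then remains to produce one point of $\mathcal{P}_0^n$. Let $\zeta$ be a primitive $n$-th root of unity and, for $i=1,\dots,n$, set
\[
q_i:=\begin{pmatrix}\zeta^{i}\\ \zeta^{-i}\end{pmatrix},\qquad p_i:=\begin{pmatrix}1 & -\zeta^{2i}\end{pmatrix}.
\]
Then $p_iq_i=\zeta^{i}-\zeta^{2i}\zeta^{-i}=0$, so \eqref{complex1} holds; $\lvert p_i\rvert^2=\lvert q_i\rvert^2=2$, so \eqref{real1} holds and $\lvert p_i\rvert^2+\lvert q_i\rvert^2\neq0$; and after substitution \eqref{complex2} and \eqref{real2} reduce to the vanishing of $\sum_i\zeta^{i}$, $\sum_i\zeta^{-i}$, $\sum_i\zeta^{3i}$ and $\sum_i\zeta^{-2i}$, all of which hold provided $n$ divides none of $1,2,3$. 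Thus $(p,q)\in\mathcal{P}_0^n$ for every $n\geq4$. The only delicate condition is $\sum_i\zeta^{3i}=0$ (it enters through $\sum_ib_ic_i=-\sum_i\zeta^{3i}$), which fails for $n=3$; in fact, writing $p_i=\varepsilon_i(d_i,-c_i)$ with $\lvert\varepsilon_i\rvert=1$ (forced by \eqref{complex1}, \eqref{real1} and $q_i\neq0$), \eqref{complex2} becomes $\sum_i\varepsilon_i(c_id_i,d_i^2,c_i^2)=0$, and since $[c:d]\mapsto[cd:d^2:c^2]$ embeds $\P^1$ as a conic in $\P^2$ (so any three distinct points of the image are linearly independent), this forces all the lines $\C q_i$ to coincide, which contradicts \eqref{real2}. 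Hence $\mathcal{P}_0^3=\emptyset$, and the non-emptiness assertion is to be understood for $n\geq4$ --- consistently with the later identification of $X_0^n$ with a space of non-degenerate closed null polygons in $\R^{2,1}$, which need at least four edges.

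The one step where work is really needed is this non-emptiness: one must exhibit a single configuration satisfying all the complex \emph{and} real moment-map equations simultaneously while keeping every $(p_i,q_i)$ nonzero, which the root-of-unity ansatz above achieves in closed form. Everything else --- smoothness and Hausdorffness of $X_0^n$, the descent of the hyperk\"ahler structure, and the dimension count --- is mechanical once Proposition~\ref{prop:p0} is available.
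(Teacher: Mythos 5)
Your argument is correct, and it does strictly more than the paper does. The paper's entire justification is the sentence preceding the proposition (free action $\Rightarrow$ $0$ is a regular value $\Rightarrow$ smooth quotient, together with the standard descent of the hyperk\"ahler structure), which is exactly the part you call mechanical; note that your intermediate count $8n-3(n+3)=5n-9$ for $\dim_{\R}\mathcal{P}_0^n$ is the correct one (the paper's ``$5(n-3)$'' is a slip, though the final $4(n-3)$ is right). The paper offers no non-emptiness argument at all, so your root-of-unity configuration is a genuine addition, and it checks out: $p_iq_i=0$, $\lvert p_i\rvert^2=\lvert q_i\rvert^2=2$, and the remaining equations \eqref{complex2}, \eqref{real2} reduce to the vanishing of $\sum_i\zeta^{i}$, $\sum_i\zeta^{-i}$, $\sum_i\zeta^{3i}$, $\sum_i\zeta^{-2i}$, which holds precisely when $n$ divides none of $1,2,3$, i.e.\ for all $n\geq 4$. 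More importantly, your observation that $\mathcal{P}_0^{3}=\varnothing$ is correct and exposes an error in the statement as printed: writing $p_i=\varepsilon_i(d_i,-c_i)$ with $\varepsilon_i\neq 0$ (forced by \eqref{complex1}, \eqref{real1} and $p_i,q_i\neq 0$), the equations \eqref{complex2} become a linear relation with nonzero coefficients among the Veronese images $(c_id_i,d_i^2,c_i^2)$ of the lines $\langle q_i\rangle$, and since a line in $\P^2$ meets that conic in at most two points this forces the three lines $\langle q_i\rangle$ to coincide, after which \eqref{real2} forces $p=q=0$ --- exactly the degeneration the paper itself excludes in the proof of Theorem~\ref{git}(1). (Equivalently, under the isomorphism of Theorem~\ref{thm:main}: three nonzero traceless nilpotent $2\times 2$ residues summing to zero must share their kernel, so no stable object exists for $n=3$; this is also consistent with the appendix, where the equations \eqref{eq:sumzero}--\eqref{eq:sumzero3} have no solution with all $\lambda_i\in\C^*$ when $n=3$.) So the hypothesis in the proposition should be $n\geq 4$; for $n=3$ the quotient is a manifold of dimension $0$ only in the vacuous sense, and the ``non-empty'' claim fails.
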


We call $X_0^n$ the space of {\bf null hyperpolygons}.

This  space can be described as an algebro-geometric quotient by the complexified group
$$K^{\C}\,:=\, ({\rm SL}(2,\C) \times (\C^*)^n) /(\Z/2\Z),$$
of $K$. For that, we first need to give a suitable definition of stability.

%invariant theoretic quotients. To elaborate this, we need the 
%stability criterion, developed by 
%Nakajima \cite{N2,N3} for general quiver varieties and adapted by Konno \cite[Definition 4.1]{K2} to the hyperpolygon context. We will recall this below.
%
%Let $\alpha$ be generic. 

Given $(p,q)\in T^* \C^{2n}$, a  subset $S\,\subset\,\{1, \ldots, n\}$ is called 
%\emph{short} if
%\begin{equation}\label{eq:short}
%\varepsilon_S(\alpha) \,<\,0
%\end{equation}
%and \emph{long} otherwise (see \eqref{eq:epsilon} for the definition
%of $\varepsilon_S(\alpha)$). Note that $S$ is short if and only if $S^c$ is long.
%Given $(p,q)\,\in\, T^* \C^{2n}$ and a subset $S\,\subset\,\{1, \cdots, n\}$, we 
\emph{straight} at $(p,q)$ if $q_i$ is proportional to $q_j$ for all $i, j \,\in\, S$. 

\begin{dfn}\label{stability} A point $(p,q)\,\in \,T^* \C^{2n}$ is called
\emph{stable} 
if the following two conditions hold:
\begin{itemize}
\item[\emph{(i)}] $q_i, p_i \,\neq\, 0$ for all $i\in \{1,\ldots, n\}$ and
\item[\emph{(ii)}] the set $\{1,\ldots, n \}$ is not straight at $(p,q)$.
\end{itemize}
\end{dfn}

%\begin{rem}\label{maximalstraight}
%{\rm Note that it is enough to verify (ii) in Definition~\ref{alpha-stability} for all 
%maximal straight sets, 
%that is, for those that are not contained in any other straight set at $(p,q)$.}
%\end{rem}

Let $\mu_{\C}^{-1} (0,0)^{\text{st}}$ denote the set of points in 
$\mu_{\C}^{-1} (0,0)$ which are stable. Then we have the following result.

\begin{proposition}
The group $K^{\C}$ acts freely on $ \mu_{\C}^{-1} (0,0)^{\text{\rm st}}$.
\end{proposition}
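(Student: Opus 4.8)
The plan is to show directly that the stabilizer in $K^{\C}$ of any $(p,q)\in\mu_{\C}^{-1}(0,0)^{\mathrm{st}}$ is trivial, following the scheme of Proposition~\ref{prop:p0} but with the real moment map replaced by the complex moment map equations \eqref{complex1}--\eqref{complex2} together with the stability condition (ii). Suppose $[A;e_1,\ldots,e_n]\in K^{\C}$ fixes $(p,q)$. Then $e_i^{-1}p_iA=p_i$ and $A^{-1}q_ie_i=q_i$, i.e. $p_iA=e_ip_i$ and $Aq_i=e_iq_i$ for every $i$. By stability condition (i) we have $p_i\neq 0$ and $q_i\neq 0$ for all $i$, so $q_i$ is a right eigenvector and $p_i$ a left eigenvector of $A$, both with eigenvalue $e_i$; in particular each $e_i$ is an eigenvalue of $A$, and since $A\in SL(2,\C)$ its eigenvalues form a pair $\{\lambda,\lambda^{-1}\}$.

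I would first rule out the case $\lambda\neq\lambda^{-1}$. Diagonalizing, we may assume $A=\mathrm{diag}(\lambda,\lambda^{-1})$; then for each $i$ the one-dimensional right and left eigenspaces for the eigenvalue $e_i\in\{\lambda,\lambda^{-1}\}$ are spanned by the same coordinate vector, so $p_i$ and $q_i$ are both supported on a single common coordinate and $p_iq_i$ equals the product of their (nonzero) entries, hence is nonzero. This contradicts \eqref{complex1}. Therefore $A$ has a repeated eigenvalue $\mu$, and $\det A=1$ forces $\mu=\pm 1$; moreover $e_i=\mu$ for all $i$.

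It remains to treat $A=\mu(\mathrm{Id}+N)$ with $N$ nilpotent. If $N\neq 0$ the eigenspace of $A$ is one-dimensional, so all the $q_i$ lie in it and are therefore proportional; that is, $\{1,\ldots,n\}$ is straight at $(p,q)$, contradicting stability condition (ii). Hence $N=0$, so $A=\mu\,\mathrm{Id}$ and $[A;e_1,\ldots,e_n]=[\mu\,\mathrm{Id};\mu,\ldots,\mu]=[\mathrm{Id};1,\ldots,1]$ in $K^{\C}=(SL(2,\C)\times(\C^*)^n)/(\Z/2\Z)$, which is the identity element. This proves that the action is free. The argument is short; the only points requiring care are the bookkeeping of the $\Z/2\Z$ identification in the final step (so that $\mu=-1$ still yields the identity) and, more conceptually, the observation that — unlike in Proposition~\ref{prop:p0}, where $\mu_\R^{-1}(0,0)$ alone sufficed — here one genuinely needs both the vanishing $p_iq_i=0$ and the non-straightness condition (ii), since neither alone rules out the other bad case.
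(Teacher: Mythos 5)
Your proof is correct and follows essentially the same route as the paper's: an eigenvector analysis of the stabilizing element, with the distinct-eigenvalue case killed by $p_iq_i=0$ together with stability (i), the non-diagonalizable case killed by stability (ii), and the residual $A=\pm\mathrm{Id}$ absorbed by the $\Z/2\Z$ quotient. If anything, your handling of the first case is slightly cleaner than the paper's, which also invokes the real moment map condition \eqref{real1} even though that is not available on $\mu_{\C}^{-1}(0,0)^{\mathrm{st}}$ — as you note, \eqref{complex1} and condition (i) alone already give the contradiction.
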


\begin{proof}  If 
$$(p,q) \cdot [A; e_1, \ldots, e_n]= (p,q)$$
for some $[A; e_1, \ldots, e_n]\in K^{\C}$ and $ (p,q)\in  \mu_{\C}^{-1} (0,0)^{\text{st}}$, then
%$$
%e_i^{-1} \, p_i \,A = p_i \quad \text{and} \quad A^{-1} \, q_i \, e_i = q_i 
%$$
%for $i=1,\ldots, n$, and so
$$
A ^T \, p_i^t= e_i \, p_i^t \, \quad \text{and} \quad A\, q_i = e_i \,q_i
$$
for $i=1,\ldots, n$.  Since, by  stability condition $(i)$, we have $q_i,p_i  \neq 0$ for all $i$,  we conclude that $q_i$ is an eigenvector of $A$ with eigenvalue $e_i$ and   $p_i^t$ is an  eigenvector of $A^T$ also with eigenvalue $e_i$.  If $A$ has two different eigenvalues $\lambda$ and $\lambda^{-1}$, we may assume that 
$$
A=\left[\begin{array}{cc} \lambda & 0\\ 0 &  \lambda^{-1}  \end{array}\right]
$$
and so, if $S:=\{i\in I: \, e_i= \lambda\}$, we have that
$$
q_i=\left( \begin{array}{l} c_i \\ 0\end{array} \right), \quad p_i=\left( \begin{array}{ll} a_i & 0 \end{array} \right), \quad \text{for} \quad i\in S,
$$
and
$$
q_i=\left( \begin{array}{l} 0 \\ d_i\end{array} \right), \quad p_i=\left( \begin{array}{ll} 0 & b_i  \end{array} \right), \quad \text{for} \quad i\in S^c,
$$
for some $a_i,b_i,c_i,d_i\in \C\setminus\{0\}$. By moment map conditions \eqref{complex1} and  \eqref{real1}, we conclude that $p_i=q_i=0$ for every $i\in I$, which is impossible by stability condition $(i)$ in Definition~\ref{stability}.

%A similar argument to the one presented in the proof of Proposition~\ref{prop:p0} shows that $A\in SL(2,\C)$ has only one eigenvalue $\lambda=\pm 1$.

If $A$ is not diagonalizable then the eigenspace $U$ of its unique eigenvalue $\lambda$ has dimension $1$ and $q_i\in U$ for all $U$, implying that $\{1, \ldots, n\}$ is straight at $(p,q)$, which is impossible by  stability condition $(ii)$. We conclude that $A$ is diagonalizable with only one eigenvalue and then, since $\det A=1$, we have
$$
[A;e_1,\ldots, e_n]= [\text{Id};1,\ldots,1] \in K^{\C}.
$$ 
\end{proof}

Before describing $X_0^n$ as a GIT quotient we need the following definition.

\begin{definition} For each $(p,q)\in T^* \C^{2n}$, let  $d_{(p,q)}: K^{\C} \to \R$  be the function given by
\begin{equation}\label{def:function}
d_{(p,q)}([A;e_1,\ldots, e_n])= \frac{1}{4} \sum_{i=1}^n \left( \lvert \hat{q}_i \rvert^2 - \lvert q_i \rvert^2 + \lvert \hat{p}_i \rvert^2 - \lvert p_i \rvert^2 \right),
\end{equation}
where $ \hat{q}_i := A^{-1} q_i\, e_i$ and $\hat{p}_i = e_i^{-1} p_i \,A$.
\end{definition}

These functions satisfy a useful property.

\begin{lemma}\label{lemma:1}
For each $(p,q)\in T^* \C^{2n}$ let  $d_{(p,q)}: K^{\C} \to \R$  be the function in \eqref{def:function}. Then 

$d_{(p,q)}( [A;e_1,\ldots, e_n] \cdot  [B;\tilde{e}_1,\ldots, \tilde{e}_n] )$ 
\begin{align*}
 =  d_{(p,q)\cdot [A;\, e_1,\ldots, e_n]}(  [B;\tilde{e}_1,\ldots, \tilde{e}_n])+ d_{(p,q)}( [A;e_1,\ldots, e_n] )
\end{align*}
for any $ [A;e_1,\ldots, e_n],  [B;\tilde{e}_1,\ldots, \tilde{e}_n] \in K^{\C}$.
\end{lemma}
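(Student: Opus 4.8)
The plan is to compute both sides of the claimed identity directly from the definition \eqref{def:function} and exhibit the cocycle-type cancellation. First I would set up notation: write $g = [A; e_1, \ldots, e_n]$ and $h = [B; \tilde e_1, \ldots, \tilde e_n]$, and let $(p',q') := (p,q) \cdot g$ denote the translated point, with components $q'_i = A^{-1} q_i e_i$ and $p'_i = e_i^{-1} p_i A$ (these are exactly the hatted quantities appearing in $d_{(p,q)}(g)$). The key observation is that the $K^\C$-action is an honest right action, so that $(p,q) \cdot (g h)$ has components obtained from $(p',q')$ by applying $h$; concretely, the $q$-component of $(p,q) \cdot (gh)$ is $B^{-1} q'_i \tilde e_i = B^{-1} A^{-1} q_i e_i \tilde e_i$, and similarly for $p$. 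I would record this matching of the hatted quantities under composition as a short preliminary computation.

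Next I would expand the left-hand side $d_{(p,q)}(gh)$ using \eqref{def:function}: it is $\frac14 \sum_i \bigl( |\widehat{\widehat q_i}|^2 - |q_i|^2 + |\widehat{\widehat p_i}|^2 - |p_i|^2 \bigr)$, where $\widehat{\widehat q_i}$ denotes the $q$-component of $(p,q)\cdot(gh)$. I would then insert the telescoping trick: add and subtract $|q'_i|^2$ and $|p'_i|^2$ inside each summand, i.e.
$$
|\widehat{\widehat q_i}|^2 - |q_i|^2 = \bigl(|\widehat{\widehat q_i}|^2 - |q'_i|^2\bigr) + \bigl(|q'_i|^2 - |q_i|^2\bigr),
$$
and likewise for the $p$-terms. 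By the matching recorded in the previous step, $\widehat{\widehat q_i}$ is precisely the $q$-component of $(p',q') \cdot h$, so $\frac14 \sum_i \bigl(|\widehat{\widehat q_i}|^2 - |q'_i|^2 + |\widehat{\widehat p_i}|^2 - |p'_i|^2\bigr)$ is exactly $d_{(p',q')}(h) = d_{(p,q)\cdot g}(h)$, while $\frac14 \sum_i \bigl(|q'_i|^2 - |q_i|^2 + |p'_i|^2 - |p_i|^2\bigr)$ is exactly $d_{(p,q)}(g)$. Summing over $i$ and regrouping gives the identity.

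This argument is essentially bookkeeping, so there is no serious obstacle; the only point requiring care is the verification that the $K^\C$-action composes correctly, i.e. that $(p,q)\cdot(gh) = \bigl((p,q)\cdot g\bigr)\cdot h$ at the level of the explicit formulas for $\hat p_i, \hat q_i$ (one must be careful with the order of the matrix factors $A$, $B$ and with the fact that $p_i$ is a row vector acted on by right multiplication while $q_i$ is a column vector acted on by left multiplication). Once the action is confirmed to be a right action — which also justifies that $d_{(p,q)}$ is well defined on $K^\C$ rather than only on $SL(2,\C)\times(\C^*)^n$, since the $\Z/2\Z$ acts trivially on all the norms $|p_i|^2, |q_i|^2$ — the telescoping is immediate and the lemma follows.
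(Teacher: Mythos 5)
Your proposal is correct and follows essentially the same route as the paper: expand $d_{(p,q)}(gh)$ from the definition, use that $(p,q)\cdot(gh)=((p,q)\cdot g)\cdot h$ so that $\widehat{\widehat q_i}=B^{-1}(A^{-1}q_ie_i)\tilde e_i$ and $\widehat{\widehat p_i}=\tilde e_i^{-1}(e_i^{-1}p_iA)B$, and then add and subtract the intermediate norms to telescope the sum into $d_{(p,q)\cdot g}(h)+d_{(p,q)}(g)$. No gaps.
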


\begin{proof}
$$
d_{(p,q)}( [A; \, e_1,\ldots, e_n] \cdot  [B;\,\tilde{e}_1,\ldots, \tilde{e}_n] ) =  \frac{1}{4} \sum_{i=1}^n \left( \lvert \hat{q}_i \rvert^2 - \lvert q_i \rvert^2 + \lvert \hat{p}_i \rvert^2 - \lvert p_i \rvert^2 \right),
$$
where $ \hat{q}_i := (A B)^{-1} q_i e_i \tilde{e}_i = B^{-1} (A^{-1} q_i e_i) \tilde{e_i}$ and $\hat{p}_i = \tilde{e}_i^{-1}(e_i^{-1} p_i A) B$. Then,
\begin{align*}
& d_{(p,q)}( [A;e_1,\ldots, e_n] \cdot  [B;\tilde{e}_1,\ldots, \tilde{e}_n] ) \\ &  =  
\frac{1}{4} \sum_{i=1}^n \left( \lvert  B^{-1} (A^{-1} q_i e_i) \tilde{e_i}\rvert^2 - \lvert A^{-1} q_i e_i  \rvert^2 + \lvert \tilde{e}_i^{-1}(e_i^{-1} p_i A) B \rvert^2 -  \lvert e_i^{-1} p_i A\rvert^2 \right) \\ &
+ \frac{1}{4} \sum_{i=1}^n   \left( \lvert A^{-1} q_i e_i  \rvert^2- \lvert q_i \rvert^2 +\lvert e_i^{-1} p_i A\rvert^2- \lvert p_i \rvert^2 \right) \\ &
= d_{(p,q)\cdot [A;\, e_1,\ldots, e_n]}(  [B;\, \tilde{e}_1,\ldots, \tilde{e}_n])+ d_{(p,q)}( [A; \, e_1,\ldots, e_n] ). 
\end{align*}
\end{proof}

By the polar decomposition (a particular case of the Cartan decomposition), we can write every matrix in $SL(2,\C)$ as 
$$
A = e^{\sqrt{-1} S} R
$$
with $R\in SU(2)$  and $S\in \frak{su}(2)$ (see, for example, \cite[Theorem 6.1]{Sa}), and every element  $[A; e_1,\ldots, e_n]\in  K^{\C} \simeq  \text{exp} (\sqrt{-1} \frak{k})  \cdot K$ as
\begin{equation}\label{polardec}
[A;e_1,\ldots, e_n]=  [e^{\sqrt{-1} S};  e^{\sqrt{-1} \hat{\xi}_1},\ldots,   e^{\sqrt{-1} \hat{\xi}_n}] \cdot [R; \hat{e}_1,\ldots, \hat{e}_n],
\end{equation}
with $R\in SU(2)$, $\hat{e}_i \in S^1$, $S\in \frak{su}(2)$ and $\hat{\xi}_i=\sqrt{-1}\xi_i\in \sqrt{-1} \R$, for every $i=1,\ldots,n$. Then, using Lemma~\ref{lemma:1}, we have
\begin{align*}
& d_{(p,q)}( [A;e_1,\ldots, e_n]  )  = d_{(p,q)}( [e^{\sqrt{-1} S} ; e^{\sqrt{-1} \hat{\xi}_1},\ldots,   e^{\sqrt{-1} \hat{\xi}_n}] \cdot [R;\hat{e}_1,\ldots, \hat{e}_n] ) \\ & =  d_{(p,q)\cdot  [e^{\sqrt{-1} S} ;  e^{-\xi_1},\ldots,   e^{-\xi_n}]} ([R;\hat{e}_1,\ldots, \hat{e}_n])  +  d_{(p,q)}([e^{\sqrt{-1} S};  e^{\sqrt{-1} \hat{\xi}_1},\ldots,   e^{\sqrt{-1} \hat{\xi}_n}]  ) \\ 
%& = d_{(p,q)\cdot [R,\hat{e}_1,\ldots, \hat{e}_n] } ([e^{\sqrt{-1} S} , e^{-\xi_1},\ldots,   e^{-\xi_n}])  \\ 
& =  d_{(p,q)} ([e^{\sqrt{-1} S} ; e^{-\xi_1},\ldots,   e^{-\xi_n}]),
\end{align*}
since $R\in SU(2)$ and  $ \hat{e}_i\in S^1$. Hence, $d_{(p,q)}$ induces a function on $K^{\C}/K$.
%It is then enough to consider the restriction of $d_{(p,q)}$ to $\text{exp} (\sqrt{-1} \frak{k})$.
%\begin{align*}
%d_{(p,q)}( [A,e_1,\ldots, e_n]  ) & = d_{(p,q)}( [R,\hat{e}_1,\ldots, \hat{e}_n] \cdot [e^{\sqrt{-1} S} , e^{\sqrt{-1} \hat{\xi}_1},\ldots,   e^{\sqrt{-1} \hat{\xi}_n}]) \\ & =  d_{(p,q)\cdot [R,\hat{e}_1,\ldots, \hat{e}_n] } ([e^{\sqrt{-1} S} ,  e^{-\xi_1},\ldots,   e^{-\xi_n}])  +  d_{(p,q)}([R,\hat{e}_1,\ldots, \hat{e}_n] ) \\ & = d_{(p,q)\cdot [R,\hat{e}_1,\ldots, \hat{e}_n] } ([e^{\sqrt{-1} S} , e^{-\xi_1},\ldots,   e^{-\xi_n}])  \\ & =  d_{(p,q)} ([e^{\sqrt{-1} S} , e^{-\xi_1},\ldots,   e^{-\xi_n}]),
%\end{align*}
%since $\lvert R^{-1}q_i \, \hat{e}_i \rvert^2=\lvert q_i\rvert^2$ and  $\lvert \hat{e}_i ^{-1}  p_i   \, R \rvert^2=\lvert p_i\rvert^2$.
%It is then enough to consider the restriction of $d_{(p,q)}$ to $\text{exp} (\sqrt{-1} \frak{k})$.

\begin{lemma}\label{lemma2}
For each $(p,q)\in T^* \C^{2n}$ let  $d_{(p,q)}: K^{\C} \to \R$  be the function in \eqref{def:function}.

Then,  for  $(S , \xi_1,\ldots, \xi_n) \in \mathfrak{su}(2)\oplus \R^n$ and $t\in \R$, we have 
\begin{enumerate}
\item[(1)] 
$ \frac{d}{d t} \left(d_{(p,q)}(  [e^{t \sqrt{-1} S} ; e^{-t \xi_1},\ldots,   e^{- t \xi_n}])\right) = - \sqrt{-1} \langle \mu_{\R} (\hat{p},\hat{q}), (S,\sqrt{-1}\xi_1,\ldots,\sqrt{-1} \xi_n) \rangle$

\noindent with $\hat{q}:=(\hat{q}_1,\ldots,\hat{q}_n)$ and $\hat{p}:=(\hat{p}_1,\ldots,\hat{p}_n)$, where 
$$\hat{q}_i:=e^{-t \sqrt{-1} S}\, q_i \, e^{-t \xi_i}\quad \text{and} \quad \hat{p}_i:=  e^{t \xi_i}\, p_i \,e^{t \sqrt{-1} S}\quad \text{for} \quad i=1,\ldots,n;
$$ 
\item[(2)] $\frac{d^2}{d t^2} \left(d_{(p,q)}(  [e^{t \sqrt{-1} S} ; e^{-t \xi_1},\ldots,   e^{- t \xi_n}])\right) \geq 0$.
\end{enumerate}
\end{lemma}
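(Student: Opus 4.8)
The plan is to compute the two derivatives directly. Write $g_t := [e^{t\sqrt{-1}S};e^{-t\xi_1},\ldots,e^{-t\xi_n}] \in K^{\C}$. By the cocycle property of Lemma~\ref{lemma:1}, $d_{(p,q)}(g_{t+s}) = d_{(p,q)\cdot g_t}(g_s) + d_{(p,q)}(g_t)$, so it suffices to differentiate the first summand in $s$ at $s=0$; this reduces the computation of $\frac{d}{dt}d_{(p,q)}(g_t)$ to the computation of $\frac{d}{ds}\big|_{s=0} d_{(\hat p,\hat q)}(g_s)$, where $(\hat p,\hat q) = (p,q)\cdot g_t$ has the components displayed in the statement. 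From the definition \eqref{def:function}, $d_{(\hat p,\hat q)}(g_s) = \tfrac14\sum_i\big(|e^{-s\sqrt{-1}S}\hat q_i e^{-s\xi_i}|^2 - |\hat q_i|^2 + |e^{s\xi_i}\hat p_i e^{s\sqrt{-1}S}|^2 - |\hat p_i|^2\big)$.

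First I would differentiate the $q$-terms: since $|e^{-s\sqrt{-1}S}\hat q_i e^{-s\xi_i}|^2 = e^{-2s\xi_i}\,\langle e^{-s\sqrt{-1}S}\hat q_i,\, e^{-s\sqrt{-1}S}\hat q_i\rangle$, and $e^{-s\sqrt{-1}S}$ is \emph{not} unitary (here $\sqrt{-1}S$ is Hermitian since $S\in\mathfrak{su}(2)$), one gets at $s=0$ the derivative $-2\xi_i|\hat q_i|^2 + \langle(-\sqrt{-1}S - (-\sqrt{-1}S)^*)\hat q_i,\hat q_i\rangle = -2\xi_i|\hat q_i|^2 - 2\langle \sqrt{-1}S\,\hat q_i,\hat q_i\rangle$, using $(\sqrt{-1}S)^* = \sqrt{-1}S$. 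The $p$-terms (row vectors) give symmetrically $+2\xi_i|\hat p_i|^2 + 2\langle\hat p_i,\hat p_i\sqrt{-1}S\rangle = 2\xi_i|\hat p_i|^2 + 2\langle\sqrt{-1}S\,\hat p_i^*,\hat p_i^*\rangle$. Assembling, $\frac{d}{dt}d_{(p,q)}(g_t) = \tfrac14\sum_i\big(-2\xi_i(|\hat q_i|^2 - |\hat p_i|^2) - 2\langle\sqrt{-1}S,\,\hat q_i\hat q_i^* - \hat p_i^*\hat p_i\rangle\big)$, and matching this against \eqref{real} — recalling that the $\mathfrak{su}(2)$-component of $\mu_{\R}$ is $\tfrac12\sum_i(\hat q_i\hat q_i^* - \hat p_i^*\hat p_i)_0$ and the $\R^n$-component has $i$-th entry $-\tfrac12(|\hat q_i|^2 - |\hat p_i|^2)$, together with the pairing $\langle\cdot,\cdot\rangle$ on $\mathfrak{su}(2)^*\oplus\R^n$ and the trace-free reduction absorbing the scalar part of $\sqrt{-1}S$ against the trivial pairing — yields exactly $-\sqrt{-1}\langle\mu_{\R}(\hat p,\hat q),(S,\sqrt{-1}\xi_1,\ldots,\sqrt{-1}\xi_n)\rangle$. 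This is part (1).

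For part (2), I would differentiate the expression from part (1) once more. Applying the cocycle identity again, it is enough to observe that $t\mapsto d_{(p,q)}(g_t)$ is convex, equivalently that $t\mapsto \sum_i\big(|e^{-t\sqrt{-1}S}q_i e^{-t\xi_i}|^2 + |e^{t\xi_i}p_i e^{t\sqrt{-1}S}|^2\big)$ is convex. Each summand is of the form $\|e^{tL}v\|^2$ for a fixed vector $v$ and a fixed (not necessarily skew-Hermitian) endomorphism $L$ — namely $L = -\sqrt{-1}S - \xi_i\,\mathrm{Id}$ acting on $q_i$, and the adjoint-type action $p_i\mapsto e^{t\xi_i}p_i e^{t\sqrt{-1}S}$ on the row vector $p_i$. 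The function $f(t) = \|e^{tL}v\|^2 = \langle e^{tL}v, e^{tL}v\rangle$ satisfies $f''(t) = \|(L + L^*)e^{tL}v\|^2 + 2\,\mathrm{Re}\langle (L^2 - L^*L)e^{tL}v, e^{tL}v\rangle$; the cleaner route is to write $f(t) = \langle e^{t(L+L^*)}\cdot(\text{unitary conjugate})\,\rangle$ — more precisely, after absorbing the skew-Hermitian part of $L$ into a unitary change of frame one reduces to $f(t) = \langle e^{tH}w(t), w(t)\rangle$ with $H = L+L^*$ Hermitian and $w(t)$ unit-norm, whence $f''(t)\ge 0$ follows from $\frac{d^2}{dt^2}\langle e^{tH}w,w\rangle = \langle H^2 e^{tH}w, w\rangle = \|H e^{tH/2}w\|^2 \ge 0$ for fixed $w$. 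Summing over $i$ and over the $p$- and $q$-contributions gives convexity, hence (2).

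The main obstacle is bookkeeping: getting the unitary-versus-Hermitian split right in the $p$-terms, where $p_i$ is a \emph{row} vector acted on on the right by $A$, so that the relevant norm is $\|p_i^* \text{-type}\|$ and the operator is $e^{t\sqrt{-1}S^{T}}$ or its conjugate; one must be careful that $S\in\mathfrak{su}(2)$ makes $\sqrt{-1}S$ Hermitian so that its square contributes with the correct sign, and that the traceless projection $(\,)_0$ in \eqref{real} exactly matches the fact that the scalar part of $\sqrt{-1}S$ pairs trivially. Once the identification of the pairing is pinned down, part (1) is a one-line Leibniz computation and part (2) is the standard convexity of $t\mapsto\langle e^{tH}w,w\rangle$ for Hermitian $H$.
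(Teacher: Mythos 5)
Your overall strategy is sound and, for part (1), genuinely different in organization from the paper's. The paper diagonalizes $S=ADA^{-1}$ with $D=\mathrm{diag}(-\sqrt{-1}x,\sqrt{-1}x)$, rewrites $d_{(p,q)}(g_t)$ as an explicit sum $\frac14\sum_i\big(e^{2t(x+\xi_i)}|a_i|^2+e^{-2t(x-\xi_i)}|b_i|^2+e^{-2t(x+\xi_i)}|c_i|^2+e^{2t(x-\xi_i)}|d_i|^2\big)+\mathrm{Const}$, and then verifies both (1) and (2) by comparing with the coordinate expression of $\mu_{\R}$ and by inspection of the (manifestly non-negative) second derivative. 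You instead use the cocycle identity of Lemma~\ref{lemma:1} with $g_{t+s}=g_t\cdot g_s$ to reduce everything to a Leibniz computation at $s=0$, exploiting that $\sqrt{-1}S$ is Hermitian. This is cleaner and avoids diagonalization; what it costs you is that the precise normalization of the pairing $\langle\cdot,\cdot\rangle$ on $\mathfrak{su}(2)^*\oplus\R^n$ is only fixed implicitly in the paper by the coordinate computation, so to fully close part (1) you would still need to pin down that constant (the diagonalization does this automatically). Note also a sign slip: the derivative of $\langle e^{sL}v,e^{sL}v\rangle$ at $s=0$ is $\langle (L+L^*)v,v\rangle$, not $\langle (L-L^*)v,v\rangle$ as written; your final expression $-2\langle\sqrt{-1}S\,\hat q_i,\hat q_i\rangle$ is nevertheless the correct one.

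Part (2) as written has a genuine problem of exposition. Convexity of $t\mapsto\|e^{tL}v\|^2$ is \emph{false} for a general endomorphism $L$ (if the Hermitian and skew-Hermitian parts of $L$ do not commute, the norm can oscillate), and the proposed fix --- ``absorbing the skew-Hermitian part of $L$ into a unitary change of frame'' to reduce to $\langle e^{tH}w(t),w(t)\rangle$ --- is not a valid reduction: $e^{tL}$ does not factor as $e^{tH_0}$ times a unitary unless the two parts commute, and even granting the factorization your final inequality requires $w$ to be independent of $t$. None of this detour is needed: in your situation $L=-\sqrt{-1}S-\xi_i\,\mathrm{Id}$ acting on $q_i$ (and $\sqrt{-1}S+\xi_i\,\mathrm{Id}$ acting on $p_i^*$) is already Hermitian, since $S\in\mathfrak{su}(2)$ makes $\sqrt{-1}S$ Hermitian and $\xi_i$ is real. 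So each summand is exactly $\langle e^{2tL}v,v\rangle$ with $L$ Hermitian and $v$ fixed, whose second derivative is $4\|Le^{tL}v\|^2\ge0$. State that and part (2) is done; this is the same ``sum of squares'' conclusion the paper reads off from \eqref{eq:secder}.
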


\begin{proof}
Since $S\in \mathfrak{su}(2)$ is diagonalizable and we can take an orthonormal basis of eigenvectors of $S$, we have $S=A D A^{-1}$ with $A\in SU(2)$ and 
$$
D = \left[ \begin{array}{cc}  - \sqrt{-1} \, x& 0 \\ 0 &  \sqrt{-1}\, x  \end{array}\right],
$$
for some $x>0$ (note that $\text{Tr}(S)=0)$.

Then, writing 
$$
A^{-1} q_i=\left( \begin{array}{l} c_i \\ d_i \end{array} \right), \quad\text{and} \quad p_i A = \left( \begin{array}{ll} a_i & b_i \end{array} \right), 
$$
with $a_i,b_i,c_i,d_i\in \C$, we obtain, for $ [e^{t \sqrt{-1} S} ; e^{-t \xi_1},\ldots,   e^{- t \xi_n}]\in \text{exp} (\sqrt{-1} \frak{k})$, 
\begin{align*}
d_{(p,q)}(  [e^{t \sqrt{-1} S} ; e^{-t \xi_1},\ldots,   e^{- t \xi_n}]) & =  
 \frac{1}{4} \sum_{i=1}^n \left( \lvert \hat{q}_i \rvert^2 - \lvert q_i \rvert^2 + \lvert \hat{p}_i \rvert^2 - \lvert p_i \rvert^2 \right),
\end{align*}
with
$$
\hat{q}_i = A  \left[ \begin{array}{cc}  e^{- t x} & 0 \\ 0 & e^{t x}  \end{array}\right] \,  A^{-1} q_i \, e^{-t \xi_i}= e^{-t \xi_i} A \left( \begin{array}{l} e^{- t x}  c_i \\ e^{t x} \, d_i \end{array} \right)
$$
and 
$$
\hat{p}_i = e^{t \xi_i} \, p_i\,  A  \left[ \begin{array}{cc}   e^{t x} & 0 \\ 0 & e^{- t x}  \end{array}\right] \,  A^{-1} = e^{t \xi_i}  \left( \begin{array}{ll} e^{t x}  a_i & e^{-t x}  b_i \end{array} \right) A^{-1}.
$$
We conclude that
\begin{align}\label{functiondt}\nonumber
& d_{(p,q)}(  [e^{t \sqrt{-1} S} ; e^{-t \xi_1},\ldots,   e^{- t \xi_n}]) \\ & =  \frac{1}{4} \sum_{i=1}^n \left( e^{2t(x+\xi_i)}\lvert a_i \rvert^2 +e^{-2t(x-\xi_i)}\lvert b_i \rvert^2 + e^{-2t(x+\xi_i)} \lvert c_i \rvert^2 + e^{2t(x-\xi_i)}\lvert d_i \rvert^2 \right) + \text{Const}.
\end{align}
On the other hand, by \eqref{real} we have 

$$
\mu_{\R} (\hat{p},\hat{q})\,=\,\frac{1}{2} \sum_{i=1}^n (\hat{q}_i \hat{q}_i^* -\hat{p}_i^* \hat{p}_i )_0 
\oplus \Big(-\frac{1}{2} (\lvert \hat{q}_1\rvert ^2 -\lvert \hat{p}_1\rvert^2), \ldots, 
-\frac{1}{2} (|\hat{q}_n|^2 -|\hat{p}_n|^2) \Big)\, ,
$$
with  
$$
(\hat{q}_i \hat{q}_i^* -\hat{p}_i^* \hat{p}_i )_0 = A  \left[ \begin{array}{cc}   e^{-2t(x+\xi_i)} \lvert c_i \rvert^2 -  e^{2t(x+\xi_i)}\lvert a_i\rvert^2  &  e^{-2t \xi_i}c_i\, \bar{d}_i - \ e^{2t \xi_i}\bar{a}_i\, b_i \\ e^{-2t \xi_i}\bar{c}_i\, d_i - \ e^{2t \xi_i}\, \bar{b}_ia_i & e^{2t(x-\xi_i)} \lvert d_i \rvert^2 -  e^{-2t(x-\xi_i)}\lvert b_i\rvert^2 \end{array}\right]_0 A^{-1}
$$
and 
$$
-\frac{1}{2} (\lvert \hat{q}_i\rvert ^2 -\lvert \hat{p}_i\rvert^2)=-\frac{1}{2} \left(  e^{-2t(x+\xi_i)} \lvert c_i \rvert^2 + e^{2t(x-\xi_i)} \lvert d_i \rvert^2
-  e^{2t(x+\xi_i)}\lvert a_i\rvert^2  -  e^{-2t(x-\xi_i)}\lvert b_i\rvert^2\right).
$$
Hence,
\begin{align}\label{derivative}
\sqrt{-1}&\, \langle \mu_{\R} (\hat{p},\hat{q}),  (S,\sqrt{-1}\xi_1,\ldots, \sqrt{-1}\xi_n) \rangle   =  -\frac{1}{2} \sum_{i=1}^n \left( (x+\xi_i) e^{2t(x+\xi_i)}\lvert a_i \rvert^2 \right. \\ &\hspace{1cm} \left.  - (x-\xi_i) e^{-2t(x-\xi_i)}\lvert b_i \rvert^2 - (x+\xi_i) e^{-2t(x+\xi_i)} \lvert c_i \rvert^2 + (x-\xi_i) e^{2t(x-\xi_i)}\lvert d_i \rvert^2 \right)\nonumber \\ & 
 \hspace{1cm} = - \frac{d}{d t} \left(d_{(p,q)}(  [e^{t \sqrt{-1} S} , e^{-t \xi_1},\ldots,   e^{- t \xi_n}])\right) \nonumber
\end{align}
and  $(1)$ follows.

Differentiating the RHS of \eqref{derivative} we obtain
\begin{align}\label{eq:secder}
\frac{d^2}{d t^2}  \left(d_{(p,q)}(  [e^{t \sqrt{-1} S} ; e^{-t \xi_1},\ldots,   e^{- t \xi_n}])\right) & =
  \sum_{i=1}^n \left( (x+\xi_i)^2 e^{2t(x+\xi_i)}\lvert a_i \rvert^2  + (x-\xi_i)^2 e^{-2t(x-\xi_i)}\lvert b_i \rvert^2  \right. 
\\ & \hspace{.2cm} \left. +\, (x+\xi_i)^2 e^{-2t(x+\xi_i)} \lvert c_i \rvert^2 + (x-\xi_i)^2 e^{2t(x-\xi_i)}\lvert d_i \rvert^2 \right). \nonumber
\end{align}
and $(2)$ follows.

\end{proof}

\begin{lemma}\label{lemma3}
For each $(p,q)\in T^* \C^{2n}$ let  $d_{(p,q)}: K^{\C} \to \R$  be the function in \eqref{def:function}. Then
\begin{enumerate}
\item[(1)] $[A; e_1,\ldots, e_n] \in K^{\C}$ is a critical point of $d_{(p,q)}$ if and only if 
$$(p,q)\cdot [A; e_1,\ldots, e_n] \in \mu_{\R}^{-1}(0,0);$$
\item[(2)] If $(p,q)\in \mu_{\C}^{-1} (0,0)^{\text{st}}$ then $d_{(p,q)}$ induces a strictly convex function on $ K^{\C} /K$.
\end{enumerate}
\end{lemma}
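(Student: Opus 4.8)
The plan is to reduce both statements to statements at the identity of $K^{\C}$ (equivalently, at the base point of $K^{\C}/K$): the cocycle identity of Lemma~\ref{lemma:1} moves a general point $g=[A;e_1,\ldots,e_n]$ there, and then the two derivative formulas of Lemma~\ref{lemma2} do the work.

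\textbf{Part (1).} Fix $g=[A;e_1,\ldots,e_n]\in K^{\C}$. By Lemma~\ref{lemma:1}, $h\mapsto d_{(p,q)}(g\cdot h)$ equals $h\mapsto d_{(p,q)\cdot g}(h)$ up to the additive constant $d_{(p,q)}(g)$, so $g$ is a critical point of $d_{(p,q)}$ exactly when the identity is a critical point of $d_{(p,q)\cdot g}$. Next, $d_{(p',q')}$ vanishes on all of $K$: this is immediate from \eqref{def:function}, since $K$ is represented by elements $(R;\hat e_1,\ldots,\hat e_n)$ with $R\in SU(2)$, $\hat e_i\in S^1$, which preserve all the norms there. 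Hence the differential of $d_{(p',q')}$ at the identity already annihilates $\mathfrak{k}$, and by the splitting $\mathfrak{k}^{\C}=\mathfrak{k}\oplus\sqrt{-1}\,\mathfrak{k}$ behind the polar decomposition \eqref{polardec} it suffices to test it on the curves $t\mapsto[e^{t\sqrt{-1}S};e^{-t\xi_1},\ldots,e^{-t\xi_n}]$, $(S,\xi_1,\ldots,\xi_n)\in\mathfrak{su}(2)\oplus\R^n$. By Lemma~\ref{lemma2}(1) at $t=0$, the derivative along such a curve is $-\sqrt{-1}\,\langle\mu_{\R}((p,q)\cdot g),(S,\sqrt{-1}\xi_1,\ldots,\sqrt{-1}\xi_n)\rangle$. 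As $(S,\xi_1,\ldots,\xi_n)$ ranges over $\mathfrak{su}(2)\oplus\R^n$ the element $(S,\sqrt{-1}\xi_1,\ldots,\sqrt{-1}\xi_n)$ ranges over all of $\mathfrak{k}$, so, the pairing between $\mathfrak{k}$ and its dual being nondegenerate, all these derivatives vanish if and only if $\mu_{\R}((p,q)\cdot g)=0$, i.e. $(p,q)\cdot g\in\mu_{\R}^{-1}(0,0)$.

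\textbf{Part (2).} The geodesics of $K^{\C}/K$ through a point $gK$ are the curves $t\mapsto g\cdot[e^{t\sqrt{-1}S};e^{-t\xi_1},\ldots,e^{-t\xi_n}]\,K$, and again by Lemma~\ref{lemma:1} the restriction of $d_{(p,q)}$ to such a geodesic differs by a constant from the restriction of $d_{(p,q)\cdot g}$ to the corresponding geodesic through the base point. Since $\mu_{\C}^{-1}(0,0)$ and both conditions in Definition~\ref{stability} are $K^{\C}$-invariant, $(p,q)\cdot g\in\mu_{\C}^{-1}(0,0)^{\text{st}}$ whenever $(p,q)$ is, so it suffices to show that, for $(p,q)\in\mu_{\C}^{-1}(0,0)^{\text{st}}$, the second derivative in Lemma~\ref{lemma2}(2) is \emph{strictly} positive for every $(S,\xi_1,\ldots,\xi_n)\neq 0$. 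I would argue by contradiction. By \eqref{eq:secder} that second derivative is a sum of non-negative terms, so if it vanished at one value of $t$ (hence, matching exponentials, at all $t$) then, in the notation of the proof of Lemma~\ref{lemma2} (where $A\in SU(2)$ diagonalizes $S$, $x\ge 0$ is the modulus of its eigenvalue, $A^{-1}q_i=\left(\begin{smallmatrix}c_i\\d_i\end{smallmatrix}\right)$, $p_iA=\left(\begin{smallmatrix}a_i&b_i\end{smallmatrix}\right)$), we would get $(x+\xi_i)a_i=(x+\xi_i)c_i=(x-\xi_i)b_i=(x-\xi_i)d_i=0$ for every $i$. Stability (i) forces $(a_i,b_i)\neq0$ and $(c_i,d_i)\neq0$, so for each $i$ either $x+\xi_i=0$ or $x-\xi_i=0$. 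If $x=0$ this forces all $\xi_i=0$ and $S=0$, i.e. the geodesic is constant. If $x>0$, these alternatives split $\{1,\ldots,n\}$, and for any $i$ with $x+\xi_i=0$ one has $b_i=d_i=0$; then the equation $a_ic_i+b_id_i=0$, which is part of $\mu_{\C}(p,q)=0$ (see \eqref{complex1}), together with $c_i\neq0$, gives $a_i=0$ and hence $p_i=0$, contradicting stability (i) — the case $x-\xi_i=0$ is symmetric. Hence the second derivative is strictly positive on every nonconstant geodesic, i.e. $d_{(p,q)}$ induces a strictly (geodesically) convex function on $K^{\C}/K$.

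\textbf{Main obstacle.} Most of the work is the bookkeeping around \eqref{polardec}: identifying $K^{\C}/K$ with $\exp(\sqrt{-1}\,\mathfrak{k})$, recognizing the curves in Lemma~\ref{lemma2} as the geodesics through the base point, and justifying that criticality and convexity may be tested on them alone. The step that genuinely uses the hypotheses, and the heart of (2), is the last one: a degeneracy of the Hessian of $d_{(p,q)}$ would force each $(p_i,q_i)$ onto one of the two eigenlines of $A$, which is incompatible with $q_i\neq0$, $p_iq_i=0$ and $p_i\neq0$ holding simultaneously.
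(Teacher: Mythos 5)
Your proof is correct and follows essentially the same route as the paper: reduce via the cocycle identity of Lemma~\ref{lemma:1} and the polar decomposition \eqref{polardec} to the derivative formulas of Lemma~\ref{lemma2}, identifying criticality with the vanishing of $\mu_{\R}$ and strict convexity with strict positivity of the second derivative \eqref{eq:secder}. If anything, your write-up is slightly more complete in two places: in (1) you test all tangent directions at the critical point (the paper only writes out the radial one coming from the polar decomposition of $A$ itself), and in (2) you correctly invoke the equation $p_iq_i=0$ from \eqref{complex1} to exclude the degenerate case $a_i,c_i\neq 0$, $b_i=d_i=0$ with $\xi_i=-x$, which is not ruled out by $p_i,q_i\neq 0$ alone.
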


\begin{proof}
If 
$$[A;e_1,\ldots, e_n] = [e^{\sqrt{-1} S} ; e^{\sqrt{-1} \hat{\xi}_1},\ldots,   e^{\sqrt{-1} \hat{\xi}_n}] \cdot [R; \hat{e}_1,\ldots, \hat{e}_n] \in K^{\C},$$
with $R\in SU(2)$, $\hat{e}_i \in S^1$, $S\in \frak{su}(2)$ and $\hat{\xi}_i=\sqrt{-1}\xi_i\in \sqrt{-1} \R$ ($i=1,\ldots,n$)
is a critical point of $d_{(p,q)}$, then 
$$\left(D d_{(p,q)}\right)_{[A; \, e_1,\ldots, e_n]}: T_{[A; \,e_1,\ldots, e_n]} K^{\C} \to \R$$
is the zero map. Let 
$$v=\sqrt{-1} \, (S;  \hat{\xi}_1 ,\ldots, \hat{\xi}_n)\cdot   [A;e_1,\ldots, e_n] \in T_{[A;\,e_1,\ldots, e_n]}K^{\C}$$ 
Then, 
$$
0= \left(D d_{(p,q)}\right)_{[A; \, e_1,\ldots, e_n]} (v) = \frac{d}{dt}\left(d_{(p,q)}\circ c \right)(1),
$$
where $c:[1-\varepsilon,1+\varepsilon]\to K^{\C}$ is given by 
$$
c(t):=  [e^{t \sqrt{-1} S} ; e^{t \sqrt{-1} \hat{\xi}_1},\ldots,   e^{t \sqrt{-1} \hat{\xi}_n}] \cdot [R; \hat{e}_1,\ldots, \hat{e}_n].
$$
(Note that $c(1)=[A;e_1,\ldots, e_n] $ and $\dot{c}(1)=v$.) Then, by Lemma~\ref{lemma:1}
\begin{align*}
0 =  \frac{d}{dt}\left(d_{(p,q)}\circ c \right)(1)  &=  \frac{d}{dt}\left( d_{(p,q)}(    [e^{t \sqrt{-1} S} ; e^{t \sqrt{-1} \hat{\xi}_1},\ldots,   e^{t \sqrt{-1} \hat{\xi}_n}])\right)(1)
\end{align*}
and, by Lemma~\ref{lemma2}, we have that 
$$
(p,q)\cdot [e^{\sqrt{-1} S} ; e^{\sqrt{-1} \hat{\xi}_1},\ldots,   e^{\sqrt{-1} \hat{\xi}_n}] \in \mu_\R^{-1}(0,0).
$$
Since $[R;\hat{e}_1,\ldots, \hat{e}_n]\in K$ and $\mu_\R^{-1}(0,0)$ is $K$-invariant, we conclude that 
$$(p,q)\cdot [A; e_1,\ldots, e_n] \in  \mu_\R^{-1}(0,0).$$

Conversely, if $(p,q)\cdot [A; e_1,\ldots, e_n] \in \mu_{\R}^{-1}(0,0)$, then, using the decomposition 
$$
[A; e_1,\ldots, e_n] = [e^{\sqrt{-1} S} ; e^{\sqrt{-1} \hat{\xi}_1},\ldots,   e^{\sqrt{-1} \hat{\xi}_n}] \cdot [R; \hat{e}_1,\ldots, \hat{e}_n] 
$$ 
in \eqref{polardec}, we have that 
$$(p,q)\cdot  [e^{\sqrt{-1} S} ; e^{\sqrt{-1} \hat{\xi}_1},\ldots,   e^{\sqrt{-1} \hat{\xi}_n}] \in  \mu_{\R}^{-1}(0,0)$$ 
(as $ \mu_{\R}^{-1}(0,0)$, is $K$-invariant). Moreover, for any
$$
v:=(\sqrt{-1}X; \sqrt{-1} \hat{x}_1 ,\ldots, \sqrt{-1}   \hat{x}_n)\cdot   [A;e_1,\ldots, e_n] \in T_{[A;\, e_1,\ldots, e_n]}K^{\C}
$$
we can take the  path $c:[-\varepsilon,\varepsilon]\to K^{\C}$ given by
$$
c(t):= [e^{\sqrt{-1} (S+t\,X)} ; e^{\sqrt{-1}(\hat{\xi}_1+ t\,\hat{x}_1)},\ldots,   e^{\sqrt{-1} (\hat{\xi}_n+ t\,\hat{x}_n)}] \cdot [R;\hat{e}_1,\ldots, \hat{e}_n] 
$$
(note that $c(0)=[A; e_1,\ldots, e_n] $ and $\dot{c}(0)=v$). Then,
\begin{align*}
\left(D d_{(p,q)}\right)_{[A; e_1,\ldots, e_n]} (v) & = \frac{d}{dt}\left(d_{(p,q)}\circ c \right)(0) \\
&  =  \frac{d}{dt}\left( d_{(p,q)}(     [e^{\sqrt{-1} (S+t\,X)} ; e^{\sqrt{-1}(\hat{\xi}_1+ t\,\hat{x}_1)},\ldots,   e^{\sqrt{-1} (\hat{\xi}_n+ t\,\hat{x}_n)}] ) \right)(0)\\
&  =  \frac{d}{dt}\left( d_{(p,q)\cdot [e^{\sqrt{-1} S} ; \, e^{\sqrt{-1}\hat{\xi}_1},\ldots,  e^{\sqrt{-1}\hat{\xi}_n}]}( [e^{t \sqrt{-1} X} ; e^{t \sqrt{-1}\hat{x}_1},\ldots,   e^{t \sqrt{-1} \hat{x}_n}] 
) \right)(0)\\
& = - \sqrt{-1} \langle \mu_{\R} (\hat{p},\hat{q}), (X;\hat{x}_1,\ldots,  \hat{x}_n) \rangle = 0
\end{align*}
since 
$$(\hat{p},\hat{q}) = (p,q)\cdot [e^{\sqrt{-1} S} ; e^{\sqrt{-1}\hat{\xi}_1},\ldots,  e^{\sqrt{-1}\hat{\xi}_n}] \in \mu_{\R}^{-1}(0,0),$$ 
and so $[A; e_1,\ldots, e_n]$ is a critical point of $d_{(p,q)}$.

By Lemma~\ref{lemma2} and \eqref{eq:secder}, if $(p,q)\in \mu_{\C}^{-1} (0,0)^{\text{st}}$ then
$$
\frac{d^2}{d t^2}  \left(d_{(p,q)}(  [e^{t \sqrt{-1} S} , e^{-t \xi_1},\ldots,   e^{- t \xi_n}])\right)   =0 \Leftrightarrow x=\xi_i=0 \,\,  \text{for every $i=1,\ldots,n$},
$$
since  $p_i\neq 0$ and $q_i\neq 0$ for every $i=1,\ldots,n$.
We conclude that, if $(p,q)\in \mu_{\C}^{-1} (0,0)^{\text{st}}$, we have 
$$
\frac{d^2}{d t^2} \left(d_{(p,q)}(  [e^{t \sqrt{-1} S} ; e^{-t \xi_1},\ldots,   e^{- t \xi_n}])\right)  >0  
$$ 
for all $[e^{\sqrt{-1} S};  e^{- \xi_1},\ldots,   e^{-  \xi_n}] \neq [\text{Id}; 1,\ldots,1]$, proving $(2)$.
\end{proof}

%\begin{rem}\label{rmk:critical}
%Note that  Lemma~\ref{lemma2} - $(1)$  implies that $[A,e_1,\ldots, e_n] \in K^{\C}$ is a critical point of $d_{(p,q)}$ if and only if $(p,q)\cdot [A,e_1,\ldots, e_n] \in \mu_{\R}^{-1}(0,0)$.
%\end{rem}

We can now describe $X_0^n$ as a finite dimensional GIT quotient:
\begin{theorem}\label{git} Let $\mathcal{P}_0^n$ be the space defined in \eqref{eq:P0n}. Then,
\begin{enumerate}
\item[(1)] $\mathcal{P}_0^n \,\subset\, \mu_{\C}^{-1} (0,0)^{\text{st}}$ and
\item[(2)] there exists a natural bijection 
$$ \iota\,: X_0^n=\mathcal{P}_0^n /K\,\longrightarrow\,
\mu_{\C}^{-1} (0,0)^{\text{st}}/ K^{\C}.$$
\end{enumerate}
\end{theorem}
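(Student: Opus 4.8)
The plan is to separate the two claims: (1) follows directly from the moment map equations of Section~\ref{Nullhyperpolygons}, whereas (2) is a Kempf--Ness type statement resting entirely on the convexity estimates of Lemmas~\ref{lemma2} and \ref{lemma3}. The map $\iota$ is the obvious one, sending the $K$-orbit of $(p,q)\in\mathcal{P}_0^n$ to its $K^{\C}$-orbit; once (1) is known this is well defined, since $\mu_{\C}^{-1}(0,0)$ is $K^{\C}$-invariant and the two conditions of Definition~\ref{stability} are preserved by $K^{\C}$, and the content of (2) is that $\iota$ is injective and surjective.

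For (1), take $(p,q)\in\mathcal{P}_0^n$. It lies in $\mu_{\C}^{-1}(0,0)$ by definition, so only stability must be checked. Condition (i) is immediate: $(p,q)\in\mu_{\R}^{-1}(0,0)$ gives $|q_i|=|p_i|$ by \eqref{real1}, which together with $|p_i|^2+|q_i|^2\neq 0$ forces $p_i,q_i\neq 0$. For condition (ii) I would argue by contradiction: if $\{1,\ldots,n\}$ were straight, the nonzero $q_i$ would all lie on a common line $\ell\subset\C^2$, and the relations $p_iq_i=0$ of \eqref{complex1} would put every $p_i$ in the one-dimensional annihilator of $\ell$; then each $q_iq_i^*$ has image in $\ell$ and each $p_i^*p_i$ has image in a line orthogonal to $\ell$, while taking traces (using $|q_i|=|p_i|$) upgrades $\sum_i(q_iq_i^*-p_i^*p_i)_0=0$ to $\sum_i(q_iq_i^*-p_i^*p_i)=0$; since the two sides are positive semidefinite with images in complementary lines they both vanish, forcing all $q_i=0$ --- impossible on $\mathcal{P}_0^n$. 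Hence $\mathcal{P}_0^n\subset\mu_{\C}^{-1}(0,0)^{\mathrm{st}}$.

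For (2), the decisive point is the claim that for every $(p,q)\in\mu_{\C}^{-1}(0,0)^{\mathrm{st}}$ the induced function $d_{(p,q)}$ on $K^{\C}/K$ is \emph{proper} (it is strictly convex there by Lemma~\ref{lemma3}(2), bearing in mind that the symmetric space $K^{\C}/K$ is a Hadamard manifold). Granting this, surjectivity of $\iota$ follows because a strictly convex proper function on a Hadamard manifold attains a unique minimum, which by Lemma~\ref{lemma3}(1) is represented by a point $(p,q)\cdot[A;e_1,\ldots,e_n]\in\mu_{\R}^{-1}(0,0)$; since $\mu_{\C}^{-1}(0,0)$ is $K^{\C}$-invariant this point lies in $\mu_{HK}^{-1}((0,0),(0,0))$, and by stability $p_i,q_i\neq 0$, so it lies in $\mathcal{P}_0^n$. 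Injectivity follows because if $(p,q)$ and $(p,q)\cdot g$ both lie in $\mathcal{P}_0^n\subset\mu_{\R}^{-1}(0,0)$, then by Lemma~\ref{lemma3}(1) both $[\mathrm{Id};1,\ldots,1]$ and $g$ are critical points of $d_{(p,q)}$, so strict convexity forces $g\in K$; freeness of the $K$- and $K^{\C}$-actions (Proposition~\ref{prop:p0} and the freeness of $K^{\C}$ on $\mu_{\C}^{-1}(0,0)^{\mathrm{st}}$) removes any stabilizer worries.

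The main obstacle is thus the properness of $d_{(p,q)}$, and this is where both stability conditions enter. Since $d_{(p,q)}$ is convex on the Hadamard manifold $K^{\C}/K$, it suffices --- by the standard fact that a convex function tending to $+\infty$ along every geodesic ray from a point is proper --- to show $d_{(p,q)}\big([e^{t\sqrt{-1}S};e^{-t\xi_1},\ldots,e^{-t\xi_n}]\big)\to+\infty$ as $t\to+\infty$ for every nonzero $(S,\xi_1,\ldots,\xi_n)\in\mathfrak{su}(2)\oplus\R^n$. Diagonalising $S$ with eigenvalue parameter $x\geq 0$ and writing the components of $p_iA$ as $a_i,b_i$ and of $A^{-1}q_i$ as $c_i,d_i$ exactly as in the proof of Lemma~\ref{lemma2}, this is the explicit sum of exponentials \eqref{functiondt}; were its limit finite, every exponential of positive rate would have vanishing coefficient. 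A short case analysis on the position of $\xi_i$ relative to $\pm x$ then gives: $\xi_i>x$ forces $p_iA=0$; $\xi_i<-x$ forces $A^{-1}q_i=0$; and $-x\le\xi_i\le x$ forces (using $p_iq_i=0$ at the endpoints) $a_i=d_i=0$, i.e. $q_i\in A\langle e_1\rangle$. For $x>0$ the first two alternatives contradict condition (i), and the third, valid for all $i$, makes $\{1,\ldots,n\}$ straight and contradicts (ii); for $x=0$ some $\xi_j\neq 0$ and the first two alternatives again contradict (i). Hence the limit is $+\infty$, $d_{(p,q)}$ is proper, and the proof is complete. The case analysis itself is elementary; it is just the one place where both stability conditions are genuinely needed.
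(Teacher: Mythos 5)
Your proof is correct and follows essentially the same Kempf--Ness route as the paper: part (1) via the real and complex moment map equations, and part (2) via the strict convexity of $d_{(p,q)}$ on $K^{\C}/K$ (Lemmas~\ref{lemma2} and~\ref{lemma3}) together with coercivity along the rays $[e^{t\sqrt{-1}S};e^{-t\xi_1},\ldots,e^{-t\xi_n}]$, including the same case analysis on the position of $\xi_i$ relative to $\pm x$. The only differences are cosmetic --- you phrase the straightness contradiction in (1) invariantly rather than in coordinates, make the Hadamard-manifold properness criterion explicit, and treat the endpoint cases $\xi_i=\pm x$ slightly more carefully via $p_iq_i=0$.
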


\begin{rem}
This result  is a consequence of the original Theorem of Kempf and Ness \cite{KN}, further explored by Mumford, Fogarty and Kirwan in \cite[Chapter $8$ and  Appendix $2$ C]{MFK}. Indeed, the function $d_{(p,q)}$ in \eqref{def:function} measures how far ${\bf p}_v$ gets from $\lvert v \rvert^2$, where  $v=(p,q)$, and ${\bf p}_v$ is the function used in \cite{KN} to keep track of the changes in length along the orbit of $v$.  Even though this theorem applies to our spaces, we include a direct, concrete adaptation of the proof  to our context for completeness.
\end{rem}

\begin{proof}

$(1)$ Let $(p,q)\in \mathcal{P}_0^n \subset \mu_{\C}^{-1} (0,0)$. By \eqref{real1}, we have that $p_i, q_i\neq 0$ for all $i\in \{1,\ldots, n\}$ and so the stability condition $(i)$ in Definition~\ref{stability} holds. If $\{1,\ldots, n\}$ is straight at $(p,q)$ then, by  \eqref{complex1}, we can assume that
$$
q_i=\left( \begin{array}{l} c_i \\ 0\end{array} \right), \quad p_i=\left( \begin{array}{ll} 0 & b_i  \end{array} \right), \quad \text{for} \quad i=1,\ldots,n,
$$
with $b_i,c_i\in \C\setminus\{0\}$.
Then, by \eqref{real2} we have 
$$
\sum_{i=1}^n \lvert c_i\rvert ^2= \sum_{i=1}^n \lvert b_i\rvert ^2 = 0,
$$
implying that $p_i=q_i=0$ for every $i$, which is impossible. We conclude that condition $(ii)$ in Definition~\ref{stability} is also satisfied.

$(2)$ Let us first see that $\iota$ is injective. If $\iota([p,q]_\R)=\iota([\hat{p},\hat{q}]_\R)$ for some $[p,q]_\R, [\hat{p},\hat{q}]_\R \in X_0^n$
then $[p,q]_\C= [\hat{p},\hat{q}]_\C$, implying that
$$
(\hat{p},\hat{q})= (p,q) \cdot [A; e_1,\ldots,e_n], 
$$
for some $[A; e_1,\ldots,e_n] \in K^{\C}$. Since $(\hat{p},\hat{q})\in  \mathcal{P}_0^n $ we have that $[A; e_1,\ldots,e_n]$ is a critical point of the function $d_{(p,q)}$ defined in \eqref{def:function} (see Lemma~\ref{lemma3}) and then, as $d_{(p,q)}$ induces a strictly convex function in $K^{\C}/K$, we conclude that $[A; e_1,\ldots,e_n] \in K$, implying that $[p,q]_\R=[\hat{p},\hat{q}]_\R$.

To see that $\iota$ is surjective, take any $(p,q)\in \mu_{\C}^{-1} (0,0)^{\text{st}}$. In order to find $[\hat{p},\hat{q}]_\R$ such that  $\iota([\hat{p},\hat{q}]_\R)=[p,q]_\C$, it is enough to show that $d_{(p,q)}$ has a minimum. Indeed, if that is the case and  $[A; e_1,\ldots,e_n]\in K^{\C}$ is a minimizer, we know, by Lemma~\ref{lemma3}, that
$$
(\hat{p},\hat{q}):=(p,q)\cdot [A; e_1,\ldots,e_n] \in \mu_\R^{-1}(0,0)
$$ 
and then $\iota([\hat{p},\hat{q}]_\R)=[p,q]_\C$. Note that $(\hat{p},\hat{q})\in \mu^{-1}_{HK}((0,0),(0,0))$ and, since  by stability condition $(i)$  we have $\hat{p}_i,\hat{q}_i\neq 0$ for every $i=1,\ldots,n$, we conclude that $(\hat{p},\hat{q})\in  \mathcal{P}_0^n $.

Now to show that $d_{(p,q)}$ has a minimum, we know, by Lemma~\ref{lemma2}, that $d_{(p,n)}$ induces a strictly convex function on $ K^{\C}/K$. Hence, using the polar decomposition of $K^{\C}$ in \eqref{polardec}, it is enough to show that 
\begin{equation}\label{limit}
\lim_{t\to + \infty} d_{(p,q)}( [e^{t \sqrt{-1} S} ; e^{-t \xi_1},\ldots,   e^{- t \xi_n}])= +\infty
\end{equation}
for every nonzero $(S ;  \xi_1, \ldots,   \xi_n)\in \mathfrak{su}(2) \oplus \R^n $. 

If $S=0$ then, since  $(0 ;  \xi_1, \ldots,   \xi_n)\neq 0$, there exists some $i_0\in \{1,\ldots,n\}$ such that $\xi_{i_0}\neq0$. Moreover, since
$$
d_{(p,q)}([e^{t \sqrt{-1} S} ; e^{-t \xi_1},\ldots,   e^{- t \xi_n}]) = \frac{1}{4} \sum_{i=1}^n \left(\lvert e^{t\xi_i} \,p_i\rvert^2 + \lvert e^{-t\xi_i} \,q_i\rvert^2\right) + \text{Const}, 
$$
we have that \eqref{limit} holds since $\xi_{0},p_{i_0},q_{i_0}\neq 0$.

If $S\neq 0$ then, writing $S=A D A^{-1}$ with $A\in SU(2)$ and 
$$
D = \left[ \begin{array}{cc}  - \sqrt{-1} \, x& 0 \\ 0 &  \sqrt{-1}\, x  \end{array}\right],
$$
for some $x>0$, we have
\begin{align*}
& d_{(p,q)}(  [e^{t \sqrt{-1} S} ; e^{-t \xi_1},\ldots,   e^{- t \xi_n}]) \\ & =  \frac{1}{4} \sum_{i=1}^n \left( e^{2t(x+\xi_i)}\lvert a_i \rvert^2 +e^{-2t(x-\xi_i)}\lvert b_i \rvert^2 + e^{-2t(x+\xi_i)} \lvert c_i \rvert^2 + e^{2t(x-\xi_i)}\lvert d_i \rvert^2 \right) + \text{Const}.
\end{align*}
If there exists $i\in \{1,\ldots,n\}$ such that 
\begin{equation}\label{eq:cond}
(a_i\neq 0 \,\, \wedge \,\,\xi_i > -x) \vee (b_i\neq 0  \,\,\wedge \,\, \xi_i > x) \vee (c_i\neq 0  \,\, \wedge  \,\, \xi_i <  -x) \vee (d_i\neq 0  \,\,\wedge \,\, \xi_i < x)
\end{equation}
then \eqref{limit} holds. Since the case where  $x\geq \xi_i \geq -x$ and  $a_i=d_i=0$ is impossible, as we would have  
$$q_i\in \left\langle\left(\begin{array}{c}1 \\ 0 \end{array}\right)\right \rangle\quad \forall i\in\{1,\ldots,n\},$$ 
contradicting stability condition $(ii)$ in Definition~\ref{stability}, the result follows. (Note that if $\xi_i>x$ then \eqref{eq:cond} holds as one of $a_i,b_i$ is necessarily non zero and similarly, if $\xi_i<-x$,  as one of $c_i,d_i$ is non zero.)

\end{proof}

%For each $(p,q)\in T^* \C^{2n}$ let us define a map $f_{p,q}: K^{\C} \to \R$ by
%$$
%f_{p,q}([A,e_1,\ldots, e_n])= \frac{1}{4} \sum_{i=1}^n \left( \lvert \hat{q}_i \rvert^2 - \lvert q_i \rvert^2 + \lvert \hat{p}_i \rvert^2 - \lvert p_i \rvert^2 \right),
%$$ 
%where $ \hat{q}_i := A^{-1} q_i e_i$ and $\hat{p}_i = e_i^{-1} p_i A$. Then we have the following Lemma
%
%Let us first show that the map is injective.

%Let us denote the elements in $\mu_{\C}^{-1} (0,0)^{\text{st}}/ K^{\C}$ by 
%$[p,q]_{\text{st}}$, and  by $[p,q]_{\R}$ the elements in $\mathcal{P}_0^n/K$, when we need to make an explicit use of one of the two constructions. In all other cases, we will simply write $[p,q]$ for a hyperpolygon in $X(\alpha)$. 
%

From Theorem \ref{git}  it follows
that
\begin{equation}\label{eq:nullstablequotient}
X_0^n \,=\, \mu_{\C}^{-1} (0,0)^{\text{st}} / K^{\C}\, .
\end{equation}

We  denote the elements in $\mu_{\C}^{-1} (0,0)^{\text{st}}/ K^{\C}$ by 
$[p,q]_{\text{st}}$, and  by $[p,q]_{\R}$ the elements in $\mathcal{P}_0^n/K$, when we need to make an explicit use of one of the two constructions. In all other cases, we will simply write $[p,q]$ for a null hyperpolygon in $X_0^n$.

\section{Spaces of Quasi-Parabolic Higgs bundles}\label{QPHBs}

Let $\Sigma$ be a compact Riemann surface and consider  a set of  $n$ ordered distinguished marked points $D\, =\,
\{x_1,\ldots,x_n\}$ in $\Sigma$. A \emph{quasi-parabolic vector bundle} over $\Sigma$ at $D$  consists of a holomorphic vector bundle $E$  of rank $r$  over $\Sigma$ and a collection of flags 
\begin{equation}\label{eq:par} 
E_x:=E_{x,1} \supsetneq E_{x,2}\supsetneq \cdots \supsetneq E_{x,s_x} \supsetneq E_{x,s_x+1}=\{0\}
\end{equation}
on the fibers $E_x$ of $E$ at each marked  point $x$ in $D$, where, if $r>1$, 
%at least one of the subspaces $E_{x,2},\ldots,E_{x,s_x}$ is non zero. 
we have $s_x >1$.
%By abuse of notation  we also denote by $E$ the quasi-parabolic bundle. Let
%$$m_i(x) := \dim E_{x,i}/E_{x,i+1}$$ 
%be the \emph{multiplicity} of the flag of $E_x$ at $E_{x,i}$. 
\begin{dfn}
 Let $E$ be a rank-$r$ quasi-parabolic bundle over $\Sigma$  at $D$ with quasi-parabolic structure as in \eqref{eq:par}. The \emph{quasi-parabolic degree} of $E$ is defined as
 $$
 \text{q-par deg}\, (E):= \text{deg}\, (E) + \frac{1}{2}\,\, \lvert D \rvert \, r.
 $$
Moreover, its  \emph{quasi-parabolic slope} is 
  $$
 \text{q-par}\,\mu\, (E):=\frac{ \text{q-par}\, \text{deg}\, (E)}{ \text{rank}\,(E)}.
 $$
\end{dfn}
\begin{rem}
The quasi-parabolic degree coincides with the usual parabolic degree of a parabolic vector bundle when all the parabolic weights at all points of $D$ are equal to $\frac{1}{2}$ (see \cite[Definition 12.4.5]{M}).
\end{rem}
A quasi-parabolic isomorphism $\varphi:E \to F$ between two quasi-parabolic bundles at $D$ with flags of the same type is a bundle isomorphism such that
$$
\varphi(E_{x,i}) = F_{x,i}  \quad \text{for all $i=1,\ldots, s_x$ and $x\in D$}.
$$
\begin{dfn}
Let $E$ be a vector bundle over $\Sigma$ with a quasi-parabolic structure at $D$. A quasi-parabolic bundle $F$ over $\Sigma$ at $D^\prime\subset D$ is a quasi-parabolic subbundle of $E$ if
\begin{itemize} 
\item $F$ is a subbundle of $E$ in the usual sense and
\item for every $x\in D^\prime$, the flag of $F_x$ is a nontrivial subflag of $E_x$, meaning that for every $i=1,\ldots, s_x^F$, there exists a $j\in\{1,\ldots,s_x^E\}$, such that $F_{x,i}=E_{x,j}$.
%given $i_0\in \{1,\ldots,s^F_x+1\}$, we have $F_{x,i_0}\subset E_{x,j}$ for some $j\in\{1,\ldots,s^E_x+1\}$.
\end{itemize}
\end{dfn}

\begin{rem}
In the above definition any subbundle of the underlying vector bundle of a quasi-parabolic bundle  is also a quasi-parabolic subbundle since we may assume that $D^\prime=\varnothing$. 
\end{rem}

We also introduce a notion of stability.
\begin{dfn}\label{dfn:stable} A quasi-parabolic bundle $E$ over  $\Sigma$ at $D$  is said to be \emph{stable} if 
$$\text{q-par}\, \mu\, ({E)\,>\,\text{q-par}\, \mu\, (L})$$
for every proper quasi-parabolic subbundle $L$ of $E$.
\end{dfn}

Let us now restrict to the  rank-$2$ case. A quasi-parabolic bundle of rank-$2$ over $\Sigma$ at $D$ is  a rank-$2$ vector bundle $E$ over   $\Sigma$ together with  a collection of complete flags
\begin{equation} \label{eq:rk2qpb}
E_x:=E_{x,1}\supsetneq E_{x,2}\supsetneq\{0\}
\end{equation} 
on the fibers $E_x$ of each marked  point $x\in D$, where $\dim_\C E_{x,2}=1$, for every $x\in D$. Note that
$$
\text{q-par deg}\, (E)= \text{deg} (E) + 2\lvert D\rvert.
$$
The \emph{dual} $E^*$ of a rank-$2$ quasi-parabolic bundle $E$ at $D$ is  the  dual of the holomorphic bundle with the collection of the dual flags at the points of $D$.

If $L$ is a quasi-parabolic line subbundle of $E$ then it has a (trivial) quasi-parabolic structure at the points in the set
\begin{equation}
\label{eq:subbundle}
S_L:=\left\{x\in D:\,\, L_x\cap E_{x,2}\neq \{0\} \right\}.
\end{equation}
Moreover,
$$
 \text{q-par deg}\, (L)= \text{deg} \,(L) +\frac{1}{2}\,\,  \vert S_L \rvert.
 $$
Then, by Definition~\ref{dfn:stable}, $E$ is a stable quasi-parabolic bundle  if and only if 
$$
\text{deg}(E)-2\text{deg}(L)> - \,(\, \lvert D\rvert -\lvert S_L\rvert\, ) 
$$
for every quasi-parabolic line subbundle $L$ of $E$.

\begin{rem}\label{rem:stable}
If, in addition,  $E$ is a trivial vector bundle of rank $2$ over $\Sigma$ with a quasi-parabolic structure at $D$, then the corresponding quasi-parabolic bundle  is stable if and only if
\begin{equation}\label{eq:2stable}
\text{deg}(L) < \frac{1}{2} \,\, \left( \,  \lvert D\rvert -\lvert S_L\rvert \,  \right)
\end{equation}
for every quasi-parabolic line subbundle $L$.
 
If $\Sigma=\C \P^1$ then $\deg(L)\leq 0$ and we conclude that \eqref{eq:2stable} is always satisfied except when $L$ is the trivial bundle and $\lvert S_L \rvert =\lvert D\rvert =n$.
This happens only when all the vector spaces $E_{x,2}$ in \eqref{eq:rk2qpb} are the same for every $x\in D$. 

We conclude that all holomorphically trivial rank-$2$ quasi-parabolic bundles over $\C \P^1$ at $D$ are stable except those for which all the $1$-dimensional flag elements are equal for all $x\in D$.
\end{rem}

%Its  \emph{degree} is
%$$
%\text{qpar-deg}(E)\,:=\,{\rm degree}(E) - \frac{n}{2}.
%$$
%
%
%
%--------------
%
%We recall that $E$ is said to be \emph{stable} if 
%$\text{par-}\mu(E)\,>\,\text{par-}\mu(L)$ 
%for every line subbundle $L$ of $E$ equipped with the
%induced parabolic structure, where, for any parabolic vector bundle $F$, the 
%\emph{slope} $\text{par-}\mu(F)$ is defined as $\text{par-deg}(F)/\text{rank}(F)$.
%
%-------------
%
%If $L$ is a quasi-parabolic line subbundle of $E$, its induced quasi-parabolic structure is given 
%by the trivial flag over each point $x$ of $D$ where $L_x\cap E_{x,2}=L_x$,
%$$
%L_{x} \supset 0.
%$$
%
%--------------------------
%
%with weights 
%
%$$
%\beta^L(x) = \left\{ \begin{array}{l} \beta_1(x), \quad \text{if} \quad L_x \cap E_{x,2} = \{ 0 \}, \\ \\ \beta_2(x), \quad \text{if} \quad L_x \cap E_{x,2} = \mathbb{C}, \end{array} \right.
%$$
%and so it has parabolic degree
%$$
%\text{par-deg}(L)\,=\,{\rm degree}(L) + 
%\sum_{i\in S_L} \beta_2(x_i) + \sum_{i\in S_L^c} \beta_1(x_i),
%$$
%where 
%\begin{equation}\label{eq:setpb}
%S_L:=\{ i \in \{1, \cdots, n\} \mid\, \beta^L(x_i)=\beta_2(x_i)\}.
%\end{equation}
%Hence, $E$ is stable if and only if every parabolic line subbundle $L$ 
%satisfies the inequality
%\begin{equation}
%\label{eq:stable}
%{\rm degree}(E) - 2\cdot {\rm degree}(L) \,>\, \sum_{i \in S_L} 
%\big(\beta_2(x_i) -\beta_1(x_i)\big) - \sum_{i \in S_L^c} \big(\beta_2(x_i) -\beta_1(x_i)\big).
%\end{equation}

Let $K_\Sigma$ denote the holomorphic cotangent bundle of $\Sigma$ and let  ${\mathcal O}_{\Sigma}(D)$ be the 
line bundle on $\Sigma$ defined by the divisor $D$. A rank-$2$ \emph{quasi-parabolic Higgs bundle} over $\Sigma$ at $D$ is a pair ${\bf E} \,:=\, (E, \Phi)$, where $E$ is a  rank-$2$ quasi-parabolic 
vector bundle over $\Sigma$ at $D$ and 
$$\Phi \in H^0(\Sigma, S QPar End(E) \otimes K_\Sigma(D))$$ 
is a \emph{Higgs field} on $E$. Here $S QPar End(E)$ is the 
subsheaf of $End(E)$ formed by \emph{strongly quasi-parabolic endomorphisms} $\varphi\,:\,E 
\,\longrightarrow\, E$,  meaning that 
$$
\varphi(E_{x,1}) \subset E_{x,2} \quad \text{and} \quad \varphi(E_{x,2})=0, \quad \text{for all $x\in D$}.
$$ 
Note that $\Phi$ is a meromorphic endomorphism-valued one-form with 
simple poles along $D$ whose residue at each $x\,\in\, D$ is nilpotent with respect to the 
flag, i.e., 
$$
(\text{Res}_x \Phi) (E_{x,i}) \,\subset\, E_{x,i+1}
$$
for all $i=1,2$ and $x\in D$, where we consider $E_{x,3}=\{0\}$. 

The definition of stability extends to quasi-parabolic Higgs bundles in the following way.

\begin{dfn} A rank-$2$ quasi-parabolic Higgs bundle ${\bf E}= (E,\Phi)$ at $D$ is 
\emph{stable} if
\begin{enumerate}
\item the residue of the Higgs field $\Phi$ at each $x\in D$ is nonzero and
\item for all  quasi-parabolic line subbundles $L\subset E$ which are preserved by $\Phi$ we have
\begin{equation}\label{eq:qpbhiggs}
\text{q-par}\,\mu(E) \,>\, \text{q-par}\, \mu(L).
\end{equation}
\end{enumerate}
\end{dfn}

\begin{rem} \label{rem:stabilityHiggs}
By Remark~\ref{rem:stable} we know that when $\Sigma=\C \P^1$ and the bundle $E$ is trivial, condition \eqref{eq:qpbhiggs} is always satisfied except when  all the flag elements $E_{x,2}$ in \eqref{eq:rk2qpb} are the same for every point $x$ in $D$ and $L$ is the trivial bundle $L=E_{x,2} \times \C$. Such line bundles are always preserved by the Higgs field $\Psi$. We conclude that all holomorphically trivial rank-$2$ quasi-parabolic Higgs bundles over $\C \P^1$ at $D$ with nonzero residues at all points of $D$ are stable except those for which the $1$-dimensional flag elements are all equal.
\end{rem}

If $G$ is a complex reductive  Lie group, a  $G$-\emph{Higgs bundle} over $\Sigma$ is a pair $(P,\Phi)$, where $P$ is a principal $G$-bundle on $\Sigma$ and $\Phi$ is a holomorphic section of $P(\frak{g}) \otimes K$, where $P(\frak{g}):= P\times_\text{Ad}{\frak{g}}$ is  the adjoint bundle associated with $P$  \cite{Hi1}. For $G=GL(2, \C)$ one obtains classical Higgs bundles. Moreover, for matrix groups, this definition can be restated in terms of vector bundles. In particular, to any principal  $SL(2,\C)$-bundle we can associate  a rank-$2$ vector bundle with trivial determinant, using the fundamental $2$-dimensional  representation of $SL(2,\C)$. Conversely, if $E$ is a rank-$2$ vector bundle, its frame bundle is a $GL(2,\C)$-principal bundle and, if in addition its determinant is trivial, the structure group  of this principal bundle can be reduced to $SL(2,\C)$ \cite{HS}. Consequently, we can think of $SL(2,\C)$-Higgs bundles as pairs $(E,\Phi)$, where $E$ is a rank-$2$ holomorphic bundle over $\Sigma$ with trivial determinant, and $\Phi$ is  a traceless holomorphic section of $End(E)\otimes K$ \cite{G-P2}. We can further generalize this definition and consider rank-$2$ quasi-parabolic $SL(2,\C)$-Higgs bundles over $\Sigma$.

\begin{dfn}
A rank-$2$  quasi-parabolic $SL(2,\C)$-Higgs bundle over $\Sigma$ at $D$ is a rank-$2$ quasi-parabolic Higgs bundle ${\bf E}= (E,\Phi)$ over $\Sigma$ at $D$, where the  underlying vector bundle of $E$ has trivial determinant and the Higgs field $\Phi$ is traceless.
\end{dfn}

%We will be considering quasi-parabolic Higgs bundles ${\bf E}= (E,\Phi)$, where the underlying rank-$2$ vector bundle $E$ has trivial determinant and the Higgs-field is traceless and call them $SL(2,\C)$-quasi-parabolic Higgs bundles,  as in  the theory for classical $G$-Higgs bundles (see, for example \cite{GP}).

Let $\mathcal{H}_0^n$ be the moduli space of stable rank-$2$ quasi-parabolic $SL(2,\C)$-Higgs bundles  over $\Sigma=\C \P^1$ at $D$,
for which the underlying holomorphic vector bundle is 
trivial.  In Appendix~\ref{appendix} we  define  a structure of 
complex manifold on  $\mathcal{H}_0^n$  which turns it into a moduli space
of quasi-parabolic Higgs bundles in the usual sense in algebraic/complex 
geometry.

We will now see that it is isomorphic to 
the space of null hyperpolygons $X_0^n$  in \eqref{eq:nullstablequotient}. The correspondence between these two spaces is given by the map
\begin{equation}
\label{eq:isom}
\begin{array}{rl}
\mathcal{I}:X_0^n \longrightarrow & \mathcal{H}_0^n\\ \\
{[p,q]}_{ \textnormal{st}} \longmapsto & [E_{(p,q)}\, , {\Phi}_{(p,q)}] =: 
\mathbf [{\bf E}_{(p,q)}] \\
\end{array}
\end{equation}
where $E_{(p,q)}$ is the trivial vector bundle $\C\P^1\times \C^2
\,\longrightarrow\, \C\P^1$ with the quasi-parabolic structure given by the flags 
\begin{align*}\label{flag}
 \C^2 & \supset \langle q_i\rangle \supset \{0\}
\end{align*}
over the $n$ marked points in $D=\{x_1,\ldots,x_n\} \,\subset\, \C \P^1$ and 
$$\Phi_{(p,q)} \,\in\, 
H^0\big(\C \P^1, S QPar End(E_{(p,q)}) \otimes K_{\C\P^1}(D)\big)$$
is the Higgs field uniquely determined by the 
residues:
\begin{equation}
\label{eq:res}
\text{Res}_{x_i} \Phi_{(p,q)} \,:=\, q_i \, p_i
\end{equation}
for each $x_i\,\in\, D$ (see the proof of Theorem~\ref{thm:main} below for a justification that $\Phi_{(p,q)}$ is uniquely determined by its residues).

\begin{theorem}\label{thm:main}
The spaces $X_0^n$ and $\mathcal{H}_0^n$ are isomorphic.
\end{theorem}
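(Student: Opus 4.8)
The plan is to construct an explicit inverse to the map $\mathcal{I}$ in \eqref{eq:isom} and to verify that both $\mathcal{I}$ and its inverse are well defined and holomorphic (with respect to the complex structures on $X_0^n$ coming from the GIT description \eqref{eq:nullstablequotient} and on $\mathcal{H}_0^n$ coming from Appendix~\ref{appendix}). First I would check that $\mathcal{I}$ is well defined: given $(p,q)\in\mu_{\C}^{-1}(0,0)^{\text{st}}$, one must show that a traceless $\Phi_{(p,q)}\in H^0(\C\P^1, SQParEnd(E_{(p,q)})\otimes K_{\C\P^1}(D))$ with prescribed residues $q_ip_i$ exists and is unique. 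Uniqueness is immediate because the difference of two such sections would be a global holomorphic traceless endomorphism-valued form without poles, i.e. a section of $End(\C^2)\otimes K_{\C\P^1}=End(\C^2)\otimes\mathcal{O}(-2)$, which vanishes. For existence, writing $D=\{x_1,\dots,x_n\}$ in an affine chart with coordinate $z$, the candidate is $\Phi_{(p,q)}=\big(\sum_{i=1}^n \frac{q_ip_i}{z-x_i}\big)\,dz$; this is automatically traceless since $\operatorname{tr}(q_ip_i)=p_iq_i=0$ by \eqref{complex1}, it is strongly quasi-parabolic since $q_ip_i$ has image $\langle q_i\rangle=E_{x_i,2}$ and kernel containing $E_{x_i,2}$, and it extends holomorphically over $z=\infty$ precisely because of the second set of equations \eqref{complex2}, i.e. $\sum_i (q_ip_i)_0=0$ together with tracelessness forces $\sum_i q_ip_i=0$, killing the $1/z$ term at infinity. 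Then stability of $(p,q)$ must be translated into stability of $(E_{(p,q)},\Phi_{(p,q)})$: the residues are nonzero because $p_i,q_i\neq 0$, and by Remark~\ref{rem:stabilityHiggs} the only way trivial-bundle stability can fail is if all flag lines $\langle q_i\rangle$ coincide — which is exactly condition (ii) of Definition~\ref{stability} failing — so $(p,q)$ stable implies $(E_{(p,q)},\Phi_{(p,q)})$ stable.

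Next I would check that $\mathcal{I}$ is $K^{\C}$-invariant and hence descends to $X_0^n$: acting by $[A;e_1,\dots,e_n]$ replaces $p_i\mapsto e_i^{-1}p_iA$, $q_i\mapsto A^{-1}q_ie_i$, so $q_ip_i\mapsto A^{-1}(q_ip_i)A$ and the flags $\langle q_i\rangle\mapsto A^{-1}\langle q_i\rangle$; this is precisely the action of the constant gauge transformation $A\in SL(2,\C)$ on the trivial bundle, which induces a quasi-parabolic isomorphism $(E_{(p,q)},\Phi_{(p,q)})\cong (E_{(p,q)\cdot[A;e]},\Phi_{(p,q)\cdot[A;e]})$ — the scalars $e_i$ act trivially on the flag lines and cancel in $q_ip_i$, so the $\Z/2\Z$ ambiguity is harmless. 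Conversely, to build the inverse, given a stable quasi-parabolic $SL(2,\C)$-Higgs bundle $(E,\Phi)$ with $E$ trivial, I trivialise $E\cong\C\P^1\times\C^2$ (the trivialisation is unique up to a constant $GL(2,\C)$, and up to a constant $SL(2,\C)$ once we fix the determinant trivialisation), read off $q_i\in\C^2$ as a generator of the flag line $E_{x_i,2}$, and factor the rank-one nilpotent residue $\operatorname{Res}_{x_i}\Phi = q_i\,p_i$ for a uniquely determined row vector $p_i\in(\C^2)^*$ (uniqueness of $p_i$ given $q_i$ holds since $q_i\neq 0$; changing the generator $q_i$ by a scalar $e_i$ rescales $p_i$ by $e_i^{-1}$, matching the $\C^*$-action). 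Tracelessness of $\Phi$ gives $p_iq_i=0$, the absence of a pole at $\infty$ (since $\Phi$ is a global section of $SQParEnd(E)\otimes K(D)$ and $K(D)$ has degree $n-2$, so the partial fraction expansion has no constant or higher-order part at $\infty$) gives $\sum_i q_ip_i=0$ hence \eqref{complex2}, and nonvanishing of residues gives $q_i\neq0$ (so $p_i\neq 0$). Stability of $(E,\Phi)$ via Remark~\ref{rem:stabilityHiggs} forbids all $\langle q_i\rangle$ being equal, giving condition (ii). Thus $(p,q)\in\mu_{\C}^{-1}(0,0)^{\text{st}}$, well defined modulo $K^{\C}$, and the two constructions are mutually inverse by inspection.

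The last point is that $\mathcal{I}$ and $\mathcal{I}^{-1}$ are holomorphic, so that the bijection is an isomorphism of complex manifolds; this is where I would invoke the complex-analytic description of $\mathcal{H}_0^n$ set up in Appendix~\ref{appendix}, under which the Higgs field and the flag data depend holomorphically on local deformation parameters that are, by the above formulas, literally affine/rational functions of $(p,q)$, and conversely. I expect the main obstacle to be bookkeeping rather than a deep difficulty: one must be careful about the precise normalisation of the trivialisation of $E$ and of $\det E$ (to pin down the residual $SL(2,\C)$ versus $GL(2,\C)$ ambiguity and reconcile it with the $\Z/2\Z$ quotient in $K^{\C}$), and about showing that the vanishing of the $\infty$-term is equivalent to exactly the equations \eqref{complex2} and not something weaker — i.e. that $SQParEnd(E)\otimes K_{\C\P^1}(D)$ has just the right sections. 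Once the dictionary ``flag line $\leftrightarrow q_i$, residue factorisation $\leftrightarrow q_ip_i$, no pole at infinity $\leftrightarrow \sum q_ip_i=0$, tracelessness $\leftrightarrow p_iq_i=0$, nonzero residue and non-straightness $\leftrightarrow$ stability'' is nailed down, the theorem follows.
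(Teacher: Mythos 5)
Your proposal is correct and follows essentially the same route as the paper: the same map $\mathcal{I}$, the same dictionary between the complex moment map equations and the residue/pole conditions on the Higgs field, the same translation of stability via Remark~\ref{rem:stabilityHiggs}, and the same appeal to the Appendix for holomorphicity. The only cosmetic difference is that you package bijectivity as an explicit inverse (factoring the rank-one nilpotent residues as $q_ip_i$, which is exactly the paper's formula \eqref{eq:pi}) where the paper proves injectivity and surjectivity separately, and you spell out the partial-fraction formula for $\Phi_{(p,q)}$ that the paper leaves implicit.
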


\begin{proof}
We first show that the map $\mathcal I$ is well-defined: the Higgs field $\Phi_{(p,q)}$ is uniquely defined,  the QPHB $\mathbf E_{(p,q)}$ is stable, and the map $\mathcal I$ is independent of the choice of representative in ${[p,q]}_{\textnormal{st}}$.

\begin{itemize}
\item Let $[p,q]\in X_0^n$. Since $(p,q)\in \mu_\C^{-1}(0,0)$ we have by \eqref{eq:complex0}  and \eqref{complex1} that 
$$
\sum_{i=1}^n \, q_i \, p_i = \sum_{i=1}^n \,( q_i \, p_i)_0 = 0. 
$$
Consequently the residues in \eqref{eq:res} uniquely determine 
the  meromorphic endomorphism valued $1$-form $\Phi_{(p,q)}$ on $\C \P^1$.
%Since  $(p,q) \in \mu_{\C}^{-1} (0)^{ \textnormal{st}}$, the set of residues  \eqref{eq:res}
%adds up to $0$ by moment map condition \eqref{complex} and so it uniquely determines the Higgs field  $\Phi_{(p,q)} \in H^0\big(S QPar End(\C\P^1 \times \C^2) \otimes K_{\C\P^1}(D)\big).$

\item By the stability of $(p,q)$ we have that the set $\{1,\ldots,n\}$ is not straight at $(p,q)$ and then, by Remark~\ref{rem:stable}, the quasi-parabolic Higgs bundle is stable.

\item To show that $\mathcal I$ is independent of the choice of a representative in ${[p,q]}_{\textnormal{st}}$, let $(\tilde p , \tilde q )$ be  in the $K^\C$-orbit of $(p,q)$ and consider $[E_{(\tilde p, \tilde q)}, \Phi_{(\tilde p, \tilde q)} ]$ as before. The Higgs field $\Phi_{(\tilde p, \tilde q)}$ is defined by the residues
\begin{align*}
\text{Res}_{x_i}  \Phi_{(\tilde p_i, \tilde q_i)} := \tilde{q}_i \, \tilde{p}_i = A^{-1} q_i z_i z_i^{-1} p_i A =
A^{-1} (q_i p_i) A = A^{-1} \, \text{Res}_{x_i} \Phi_{(p,q)} A
\end{align*}
for some $A \in SL(2, \C)$ and $z_i \in \C^*$. Moreover, the flags in $E_{(\tilde p, \tilde q)}$ are determined by $\tilde q_i = A^{-1} q_i z_i$. 
Since
 $q_i z_i \in \langle q_i \rangle$, we have that $A^{-1} \langle q_i\rangle =\langle \tilde{q}_i \rangle$ for every $i\in \{1,\ldots, n\}$ and so 
 $[E_{(p,q)}, \Phi_{(p,q)}] = [E_{(\tilde p, \tilde q)}, \Phi_{(\tilde p, \tilde q)} ]$. 

\end{itemize}

Let us now see  that $\mathcal{I}$ is injective. Let  $[p,q]_{\text{st}},[\hat{p},\hat{q}]_{\text{st}}\in X_0^n$  be such that 
$$\mathcal{I}([p,q]_{\text{st}})=\mathcal{I}([\hat{p},\hat{q}]_{\text{st}}).$$ 
%for some 
%$[p,q]_{\text{st}},[\hat{p},\hat{q}]_{\text{st}}\in X_0^n$. 
%By Proposition~\ref{git} we can assume, without loss of generality that $(p,q),(\hat{p},\hat{q})\in \mathcal{P}_0^n$. Since $\mathcal{I}([p,q]_{\text{st}})=\mathcal{I}([\hat{p},\hat{q}]_{\text{st}})$, we have that
Then there exists $A\in SL(2,\C)$ such that $\langle q_i \rangle = A \langle \hat{q}_i \rangle$ for every $i=1,\ldots,n$, and so
$$
\hat{q} = q \cdot [A; e_1,\ldots,e_n]
$$
for some $[A;e_1,\ldots,e_n]\in K^\C$. 
Moreover,
$$
\text{Res}_{x_i} \Phi_{(\hat{p},\hat{q})} = A^{-1}\, \text{Res}_{x_i} \Phi_{(p,q)} \, A 
$$
and so
$$
A^{-1} q_ip_i A = \hat{q}_i\hat{p}_i  = A^{-1} q_i \, e_i \, \hat{p}_i, 
$$
implying that $q_i p_i A = q_i \,\hat{p}_i\, e_i$. Multiplying both sides by $q^*_i$, we obtain
$$
q_i^* q_i \, p_i A = q_i^* q_i \, \hat{p}_i e_i \Leftrightarrow \lvert q_i \rvert^2 p_i A =  \lvert q_i \rvert^2 \hat{p}_i e_i,
$$
and, since $\lvert q_i \rvert\neq 0$, we get
$$
\hat{p}_i  = e_i^{-1} p_i A.
$$
We conclude that 
$$
(\hat{p},\hat{q}) =(p, q) \cdot [A;e_1,\ldots,e_n]
$$
for some $[A;e_1,\ldots,e_n]  \in K^\C$ and so  $[p,q]_{\text{st}}=[\hat{p},\hat{q}]_{\text{st}}$.

To show that $\mathcal{I}$ is surjective, let $[E,\Phi]\in  \mathcal{H}_0^n$. For each  point $x_i \in D, $ let  $q_i = (c_i, d_i)^t$ be a generator of  
$E_{x_i,2}$ and, representing the residue of the Higgs field $\Phi$ at $x_i$ by a traceless matrix
\begin{equation*}
 N_i:= \text{Res}_{x_i} \Phi,  
%\left( \begin{array}{cc}
%r_{11}^i & r_{12}^i\\ \\  
%r_{21}^i & -r_{11}^i
%\end{array} \right),
\end{equation*}
we take  $p_i $ to be 
\begin{equation}
\label{eq:pi}
 p_i = (a_i, b_i) := \frac{1}{\lvert q_i\rvert^2} \cdot q_i^* N_i.
\end{equation}
Note that  $\lvert q_i\rvert \neq 0$  for all $i\in \{1,\ldots,n\}$ since the flags are nontrivial.

The pair $(p,q)$ constructed in this way is in $\mu_{\C}^{-1}(0,0)$ and is stable: 

\begin{itemize}  
\item Since $N_i$ is nilpotent with respect to the flag, we have that 
$$
p_i q_i = \frac{1}{\lvert q_i\rvert^2} \cdot q_i^* N_i q_i = 0.
$$
Moreover, since $N_i$ is traceless and $ N_i q_i=0$, we get
\begin{equation}\label{eq:product}
q_i \, p_i = N_i = (q_i p_i)_0,
\end{equation}
and then, since  
% the  moment map condition \eqref{complex1}
%$$a_i c_i + b_i d_i =0$$ 
%is verified and, since $N_i$ is traceless, one gets $r_{22}^i= - r_{11}^i$ and so, 
% as $N_i$ is nilpotent with respect to the flag, one has
%\begin{equation}
%\label{eq:flag}
%\left( \begin{array}{cc}
%r_{11}^i & r_{12}^i\\ \\
%r_{21}^i& - r_{11}^i\\
%\end{array} \right)
%\left( \begin{array}{c}
%c_i\\ \\
%d_i\\
%\end{array} \right)= \left( \begin{array}{c} 0\\ \\ 0
%\end{array} \right),
%\end{equation}
%implying that
%\begin{equation}\label{eq:r1}
%r_{12}^i d_i^2 + r_{21}^i c_i^2=0 \quad \text{and}\quad r_{11}^i(c_i^2+d_i^2)+c_id_i(r_{12}^i-r_{21}^i)=0.
%\end{equation}
%By the definition in \eqref{eq:pi} it is easy to verify that
%\begin{equation}\label{eq:r2}
%r_{12}^i-r_{21}^i=b_ic_i-a_id_i.
%\end{equation}
%Multiplying the second equation in \eqref{eq:r1} by $a_i$ one gets
%\begin{align*}
%a_i(c_i^2+d_i^2)  r_{11}^i +a_i c_id_i(r_{12}^i-r_{21}^i) & =0 \Rightarrow (r_{12}^i-r_{21}^i) d_i (a_ic_i-r_{11}^i)=0,
%\end{align*}
%where we used  $a_i(c_i^2+d_i^2) = -  (r_{12}^i-r_{21}^i)  d_i$,
%and so either $r_{12}^i=r_{21}^i$, or $d_i=0$ or $r_{11}^i=a_ic_i$. It is then easy to verify that, in all cases, we have
%$$
%r_{11}^i=a_ic_i, \quad r_{12}^i=b_ic_i \quad \text{and} \quad r_{21}^i=a_id_i.
%$$
the sum of the residues $N_i$ is  $0$, we obtain $\sum_{i=1}^n (q_i p_i)_0=0$. We conclude that
$(p,q) \in \mu_{\C}^{-1}(0,0)$; 

\item To show that $(p,q)$ is stable, we need to check conditions $(i)$ and $(ii)$ of Definition~\ref{stability}. The first one ($q_i \neq 0$ and $p_i\neq 0$ for all $i$) is trivially verified since the flags are complete and, by assumption, the residue of the   Higgs field never vanishes on  $D$ (note that $q_ip_i=N_i$). For $(ii)$, if $S= \{ 1, \ldots, n\}$ were straight at $(p,q)$, then all the flag elements $E_{x,2}$ for $x\in D$ would be the same complex line $\ell$. We could therefore consider the trivial line bundle $L$ over $\C \P^1$ with total space $\C \P^1\times \ell$ which is trivially a quasi-parabolic subbundle of $E$   at $D$ (i.e. $S_L=D$, where $S_L$ is the set  defined in \eqref{eq:subbundle}). Moreover, it is also preserved by the Higgs field $\Phi$ 
since
$$
(\text{Res}_{x_i} \Phi)\, ( \ell) = 0
$$
and then, by Remark~\ref{rem:stable}, the quasi-parabolic Higgs bundle $\mathbf E$ would not be stable.

\end{itemize}
It is easy to check that $\mathcal{I}([p,q]_{\text{st}})=[E,\Phi]$ and so $\mathcal{I}$ is surjective.

From the local description of  $\mathcal{H}_0^n$  given in Proposition~\ref{prop:appprop1} it is easy to check that the map $\mathcal{I}$ is a biholomorphism between the two complex manifolds (where we consider in $X_0^n$ the complex structure  induced by the complex structure $I$).

\end{proof}

\section{Involutions}\label{sec:inv}

%Let $\mathcal{H}_0$ be the moduli space of stable 
%quasi-parabolic $SL(2,\C)$-Higgs bundles of rank two such that the underlying holomorphic vector 
%bundle is trivial and consider the involution
%\begin{equation}\label{eq:invPHB}
%(E, \Phi) \,\longmapsto\, (E^*,\phi^t)\, ,
%\end{equation}
%We will study the fixed-point set of this involution.
%
%-----------------

The compact real form $\frak{su}(2)$ of $\frak{sl}(2,\C)$ is the fixed point set of the anti-linear  involution
\begin{align*}
\iota_1: \frak{sl}(2,\C) & \to \frak{sl}(2,\C) \\
X & \mapsto -X^*
\end{align*}
and the involution
\begin{align*}
\iota_2: \frak{su}(2) & \to \frak{su}(2)  \\
X & \mapsto \overline{X}
\end{align*}
gives the Cartan decomposition 
\begin{equation}\label{eq:cartan}
\frak{su}(2) = \frak{k} \oplus \frak{h},
\end{equation} where
$$
\frak{k} :=\{ X\in \frak{su}(2): \iota_2(X)=X\} = \frak{so}(2)
$$
and 
$$
 \frak{h} :=\{ X\in \frak{su}(2): \iota_2(X)=- X\} = \{ \text{$2\times2$  traceless, symmetric imaginary matrices} \}.
$$
Then, denoting by $\frak{m}$ the Lie algebra $\sqrt{-1}\frak{h}$ of traceless symmetric endomorphisms of $\R^2$, the  split real form $\frak{k}\oplus \frak{m}$  of $\frak{sl}(2,\C)$ corresponding to \eqref{eq:cartan} is $\frak{sl}(2,\R)$. It is the fixed point set of the extension of the involution $\iota_2$ to $\frak{sl}(2,\C)$.

At the group level, the compact real form $SU(2)$ of $SL(2,\C)$ is the fixed point set of the anti-holomorphic involution
\begin{align*}
\hat{\iota}_1: SL(2,\C) & \to SL(2,\C)  \\
X & \mapsto (X^*)^{-1}
\end{align*}
while  the anti-holomorphic involution 
\begin{align*}
\hat{\iota}_2: SL(2,\C) & \to SL(2,\C)  \\
X & \mapsto \overline{X}
\end{align*}
has fixed point set the split real form $SL(2,\R)$ of $SL(2,\C)$.

The involution $\theta:=\iota_2 \circ \iota_1$ given by $\theta(X)=-X^t$ is the Cartan involution of the split real form $SL(2,\R)$ of $SL(2,\C)$ (see \cite[Chapter 5]{OV}). In the  moduli space
$\mathcal{H}_0^n$ of quasi-parabolic Higgs bundles, the map $\theta$ induces the involution 
\begin{equation}\label{eq:invPHB}
(E, \Phi) \,\stackrel{\Theta}{\longmapsto}\, (E^*,\Phi^t).
\end{equation}
%It is holomorphic in $I$ and anti-holomorphic in $J$ and $K$.
 
Recall that a   $SL(2,\R)$-\emph{Higgs bundle} over $\Sigma$ is a pair $(P,\Phi)$, where $P$ is a holomorphic principal $SO(2,\C)$-bundle over $\Sigma$ (i.e. a line bundle $L$) and $\Phi$ is a holomorphic section of $P(\frak{m}^\C) \otimes K$, where $P(\frak{m}^\C):= P\times_\text{Ad}\frak{m}^\C=L^2\oplus L^{-2}$ is  the adjoint bundle associated with $P$ (see for example \cite{Mu}). Indeed, considering the standard complex structure 
$$
I=\left(\begin{array}{cc} 0 & -1 \\ 1 & 0 \end{array} \right) \in End(\R^2)
$$
we can take the decomposition of $\R^2\otimes \C \simeq \C^2 = V_+\oplus V_{-}$, where $V_+$ and  $V_{-}$ are the eigenspaces $V_+:=\langle (1,-\sqrt{-1})\rangle$ and $V_{-}:=\langle (1,\sqrt{-1})\rangle $ of $I$ (corresponding to the eigenvalues $\pm \sqrt{-1}$).
Taking the basis $\{v_+,v_{-}\}$ of $\C^2$ with $v_+:= (1,-\sqrt{-1}) \in V_+$ and $v_{-}:=\frac{1}{2} (1,\sqrt{-1}) \in V_-$,  a traceless symmetric endomorphism $\psi$  of $\R^2\otimes \C$ for the standard Euclidean structure (denoted by $\cdot$) on $\R^2$ (i.e. an element of  
$\frak{m}^\C$) can be written in this basis as a matrix 
$$
A:= \left(\begin{array}{cc} a_{11} & a_{12} \\ a_{21} & -a_{11} \end{array} \right)
$$
with 
$$
a_{11}=(a_{11} v_+ + a_{21} v_{-})\cdot v_{-} =  v_+ \cdot (a_{12} v_+ -a_{11}v_{-})= -a_{11}, 
$$
since $\psi( v_+) \cdot v_{-} = v_+ \cdot \psi( v_{-})$. Hence
$$
A= \left( \begin{array}{cc} 0 & a_{12} \\ a_{21} & 0 \end{array} \right).
$$
Moreover, the adjoint action of $SO(2,\C)\simeq \C^*$ on $\frak{m}^\C$ is given by
$$
\lambda \cdot \psi =  \left( \begin{array}{cc}  \lambda^{-1} & 0 \\ 0 & \lambda  \end{array} \right) \left(\begin{array}{cc} 0 & a_{12} \\ a_{21} & 0 \end{array} \right)   \left( \begin{array}{cc}  \lambda & 0 \\ 0 & \lambda^{-1}  \end{array} \right) = \left( \begin{array}{cc} 0 & \lambda^{-2} a_{12} \\ \lambda^2 a_{21} & 0 \end{array} \right). 
$$
Consequently, we can think of an $SL(2,\R)$-\emph{Higgs bundle} over $\Sigma$ as a Higgs bundle $(E,\Phi)$ whose underlying vector bundle is $E=L^2\oplus L^{-2}$ and the Higgs field $\Phi$ is 
$$\Phi=(\Phi_+,\Phi_{-})\in H^0(\Sigma, L^2\otimes K)\oplus H^0(\Sigma, L^{-2}\otimes K).$$

%Hence, to any principal  $SL(2,\R)$-bundle we can associate  a rank-$2$ vector bundle with trivial determinant (by using the standard representation of  $SO(2,\C)=\C^*$ in $\C^2$) with a nondegenerate quadratic form $Q\in H^2(S^2E^*)$. Consequently, we can think of $SL(2,\R)$-Higgs bundles as triples $(E,Q,\Phi)$, where $E$ is a rank-$2$ holomorphic vector bundle over $\Sigma$ with trivial determinant, $Q\in H^2(S^2E^*)$ is a quadratic form and $\Phi$ is  a symmetric traceless holomorphic section of $End(E)\otimes K$ \cite[Section 6.2]{GP}.  

Following \cite{BGM} we can again generalize this definition  to the quasi-parabolic case.
%and consider rank-$2$ quasi-parabolic $SL(2,\R)$-Higgs bundles over $\Sigma$.

\begin{definition}\label{def:sl2RpHb}
A rank-$2$  quasi-parabolic $SL(2,\R)$-Higgs bundle over $\Sigma$ at $D$ is a rank-$2$ quasi-parabolic Higgs bundle ${\bf E}= (E,\Phi)$ over $\Sigma$ at $D$, such that
\begin{itemize}
\item the  underlying vector bundle is $E=L\oplus L^{-1}$ for some line bundle $L$ over $\Sigma$ with quasi-parabolic structure at $D$ given by
$$
E_x:=E_{x,1}\supset E_{x,2}\supset\{0\},
$$
with $E_{x,2}=L_x$ for $x$ in some subset $S_L\subset D$ and  $E_{x,2}=L^{-1}_x$ for $x\in D\setminus S_L$;
\item  the Higgs field  is of the form
$$
\Phi=\left(\begin{array}{cc} 0 & \alpha \\ \beta & 0 \end{array} \right):E\to   E \otimes K_\Sigma(D)
$$
with $\beta = 0$ on the fibers over $S_L$ and $\alpha=0$ on those of  $D\setminus S_L$.
% $\Phi \in H^0(\Sigma, L\otimes K(D))\oplus H^0(\Sigma, L^{-1}\otimes K(D))$ is of the form
%$$
% H^0(\Sigma,  S QPar End(L) \otimes K_\Sigma(D))\oplus H^0(\Sigma, L^{-2}\otimes K).
%$$
\end{itemize}
%$$
% H^0(\Sigma, S QPar End(E) \otimes K_\Sigma(D))
%$$
\end{definition}

We will see that  the fixed-point set of the involution  $\Theta: \mathcal{H}_0^n \to \mathcal{H}_0^n$ defined in \eqref{eq:invPHB} is the moduli space of  rank-$2$  quasi-parabolic $SL(2,\R)$-Higgs bundles over $\C \P^1$ at $D$ for which the underlying vector bundle is holomorphically trivial (i.e. those for which the line bundle $L$   in Definition~\ref{def:sl2RpHb} is the trivial line bundle over $\C \P^1$).

%A principal  
% 
% Indeed, if $[(E,\phi)]\in \mathcal{H}_0$ then $E$ is a holomorphic bundle (a principal $C^*$-bundle on $\C \P^1$

%of stable 
%quasi-parabolic $SL(2,\C)$-Higgs bundles of rank two such that the underlying holomorphic vector 
%bundle is trivial. 

%Moreover, the  fixed points of this involution are the $SL(2,\R)$-Higgs bundles of rank two (such that the underlying holomorphic vector 
%bundle is trivial). 
%
%
%
%Following 
%Moreover, the antilinear involution 
%\begin{align*}
%\iota_2: \frak{sl}(2,\C) & \to \frak{sl}(2,\C) \\
%X & \mapsto \overline{X}.
%\end{align*}
%defines the split real form $\frak{sl}(2,\R)$ of $\frak{sl}(2,\C)$.

%The involutions
%\begin{align*}
%\iota: \frak{su}(2) & \to \frak{su}(2) & & \\
%X & \mapsto \bar{X}.
%\end{align*}

%
%The subgroups $SU(2)$ and $SL(2,\R)$ ar real forms of $SL(2,\C)$. The first (the compact real form of $SL(2,\C)$),  is given by the fixed point set of the anti-holomorphic involution
%\begin{align*}
%\iota: SL(2,\C) & \to SL(2,\C) \\
%X & \mapsto (X^*)^{-1}
%\end{align*}
%---------------------------

For that we will use the isomorphism in
\eqref{eq:isom} and study the fixed point set of the corresponding involution 
on the  space $X_0^n$ of null hyperpolygons, that is  the map
\begin{align}
\label{eq:inv}
X_0^n & \stackrel{\iota}{\longrightarrow} X_0^n \\ \nonumber
[p,q] & \longmapsto\, [q^t,p^t].
\end{align}
It is holomorphic with respect to the complex structure induced by $I$ and anti-holomorphic with respect to those induced by $J$ and $K$, where $I,J,K$ are the standard hyperk\"ahler complex structures on $T^*\C^{2n}$.

\begin{rem}
The dual of a rank-$2$ trivial quasi-parabolic bundle $E$ with quasi-parabolic structure given by a collection of complete flags 
\begin{equation*}
\C^2 \simeq E_x:=E_{x,1}\supsetneq E_{x,2}\supsetneq E_{x,3}=\{0\}
\end{equation*} 
on the fibers $E_x$ at the parabolic points $x\in D$ is the  rank-$2$ trivial quasi-parabolic bundle $E$ with quasi-parabolic structure given by the collection of dual flags
\begin{equation*}
\C^2 \simeq E^*_x:=E_{x,3}^0 \supsetneq E_{x,2}^0\supsetneq E_{x,1}^0=\{0\},
\end{equation*} 
where 
$$
E_{x,i}^0 = \{f \in E_x^*:\,\, f(E_{x,i})=0 \}.
$$
Then, if $[p,q]_{st}\in X_0^n$ is such that
$$
\mathcal{I}^{-1}([E, \Phi]) = [p,q]_{st},
$$
where $\mathcal{I}:X_0^n \to  \mathcal{H}_0^n$ is the map defined in \eqref{eq:isom}, we have
$$
E_{x_i,2} = \langle q_i\rangle 
$$
for every $i=1,\ldots,\lvert D \rvert$ and so, by \eqref{eq:complex0}, we have
$$
E_{x_i,2}^0 = \langle p_i^t \rangle. 
$$
Moreover,  from \eqref{eq:product} we have 
$$N_i^t=p_i^t \, q_i^t$$
and so
$$
q_i^t = \frac{1}{\lvert p_i\rvert^2} \cdot \overline{p}_i \, N_i^t = \frac{1}{\lvert p_i^t\rvert^2} \cdot (p_i^t)^* \, N_i^t .
$$
Hence
$$
\mathcal{I}^{-1}([E^*, \Phi^t]) = [q^t,p^t]_{st}
$$
and the involution $\Theta:  \mathcal{H}_0^n \to \mathcal{H}_0^n$ defined in \eqref{eq:invPHB} induces the map 
$\iota: X_0^n\to X_0^n$ on  \eqref{eq:inv}.

Note that this map is well-defined. Indeed, if $[\widetilde{p},\widetilde{q}\,]=[p,q]\in X_0^n$, then there exists $[A;e_1,\ldots,e_n]\in K$
such that
$$
(\widetilde{p},\widetilde{q}\,)=(p,q)\cdot [A;e_1,\ldots,e_n]
$$  
and so 
$$
\widetilde{p}_i ^{\,\,t}= (e_i^{-1} p_i A)^t = A^T p_i^t e_i^{-1} \quad \text{and} \quad \widetilde{q}_i^{\,\,t} = (A^{-1} q_i e_i)^t=e_i q_i^{\,t} \,\bar{A}, 
$$
implying that
$$
( \widetilde{q}^{\,\,t},\widetilde{p} ^{\,\,t})= (q^{\,t},p^{\,t})\cdot [\,\bar{A}; e_1^{-1},\ldots, e_n^{-1}],
$$
and so $[\widetilde{q}^{\,\,t},\widetilde{p} ^{\,\,t}\,]=[q^{\,t},p^{\,t}]$.
Note that, since $A\in SU(2)$, we have 
$$
(\bar{A})^*\,\,\bar{A}= \overline{A^*A}=I
$$
and $(\bar{A})^{-1}=A^T$.
\end{rem}

%-------------------

%As before, $M(\alpha)$ denotes the moduli space of polygons in $\R^3$.
%Theorem \ref{thm:invfixedpoints} describes the fixed-point set of the
%$\Z/2\Z$-action in \eqref{eq:inv}.

Let $\mathcal S$ be the collection of subsets  $S \subset \{1,\ldots, n\}$ such that $2\leq \lvert S\rvert \leq n-2$ and $1\in S$. For $S\in  \mathcal S$, consider the sets
\begin{equation} \label{eq:ZS}
Z_S \,:= \,\big\{[p,q] \in X_0^n:\,~ S \, \text{ and } S^c\, 
\text{ are straight  at }\, (p,q) \big\}.
\end{equation}
Then we have the following result.

\begin{theorem}
\label{thm:invfixedpoints}
The fixed-point set of the involution in \eqref{eq:inv} is
$$ (X_0^n)^{\,\iota}\,:=\bigcup_{{\tiny \begin{array}{c} S \in\, \mathcal S \end{array}}} 
Z_S\, ,$$
% \\ \text{s.t.}\,\,\,1\in S
where $Z_S$ is defined above.
Moreover, each $Z_S$ is a non-compact symplectic manifold of dimension $2(n-3)$
%except when 
%$\lvert S\rvert = n-1$, in which case $Z_S=X_S$ is compact and diffeomorphic
%to $\C \P^{n-3}$. In all cases, 
and $(X_0^n)^{\,\iota}$ has $2^{\,n-1}-(n+1)$ connected components.
\end{theorem}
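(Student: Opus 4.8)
The strategy is to describe the fixed locus explicitly using the characterization \eqref{eq:nullstablequotient} of $X_0^n$ as $\mu_\C^{-1}(0,0)^{\mathrm{st}}/K^\C$, and then to handle the manifold structure and component count separately. First I would show that $[p,q]$ is fixed by $\iota$ if and only if the line configurations $\langle q_1\rangle,\dots,\langle q_n\rangle$ in $\C^2$ occupy exactly two distinct points of $\P^1$. Indeed $\iota[p,q]=[q^t,p^t]$, so $[p,q]$ is fixed precisely when there is $[A;e_1,\dots,e_n]\in K^\C$ with $(q^t,p^t)=(p,q)\cdot[A;e_1,\dots,e_n]$. Arguing as in the proof of Theorem~\ref{thm:main} (multiplying the flag-compatibility relations by the appropriate conjugate vectors to recover the $p$-components), the existence of such an element forces $A$ to interchange the span of each $q_i^t$ with the span of some $q_j$; since $A$ is a single linear map of $\C^2$ it can realize at most two eigendirections, and a stable point is by Definition~\ref{stability}(ii) not straight, so the $\langle q_i\rangle$ cannot all coincide. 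This yields the partition $\{1,\dots,n\}=S\sqcup S^c$ with $S,S^c$ straight and both nonempty; stability forces $2\le|S|\le n-2$, and after relabeling (or rather: choosing the convention $1\in S$) we obtain $[p,q]\in Z_S$ for a unique $S\in\mathcal S$. Conversely, one checks directly that every $Z_S$ is $\iota$-invariant and pointwise fixed by exhibiting the diagonal $A=\mathrm{diag}(\lambda,\lambda^{-1})$ (in coordinates adapted to the two lines) together with suitable $e_i$. Hence $(X_0^n)^\iota=\bigcup_{S\in\mathcal S}Z_S$, and the union is disjoint since the unordered partition $\{S,S^c\}$ is an invariant of the point.

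**Manifold structure and dimension.** For each fixed $S\in\mathcal S$, I would identify $Z_S$ with a ``spatial polygon type'' subquotient: after using $K^\C$ to bring the two cluster directions to $\langle e_1\rangle$ and $\langle e_2\rangle$, the complex moment-map equations \eqref{complex1}--\eqref{complex2} and the nonvanishing conditions cut out a subvariety of $\mathcal P_0^n$ on which the residual stabilizer is the diagonal torus in $SU(2)$ together with the $U(1)^n$; dividing out, $Z_S$ becomes a Kähler quotient of (a product of) $\C^{|S|}\times\C^{|S^c|}$-type data by a torus, of real dimension $4n - (\text{moment conditions}) - (\text{group dim})$. A cleaner route: $\iota$ is anti-holomorphic for $J$ and $K$ and holomorphic for $I$, so its fixed set is a complex submanifold for $I$ and Lagrangian for $\omega_J,\omega_K$; in particular $Z_S$ is a symplectic submanifold for $\omega_I$, and being a connected component of the fixed set of a holomorphic involution on the $4(n-3)$-dimensional $X_0^n$ it is smooth. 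The dimension count $\dim_\R Z_S = 2(n-3)$ I would obtain by a direct parameter count at a generic point (the $|S|$ lengths of the first cluster and $|S^c|$ of the second, minus closure constraints and the residual torus action), checking it is independent of $|S|$; non-compactness follows because the Higgs/cotangent directions along $Z_S$ are unbounded, exactly as for the $SL(2,\R)$-components in \cite{BFGM,BGM}.

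**Counting components.** Finally, the number of connected components is the number of unordered partitions $\{S,S^c\}$ of $\{1,\dots,n\}$ with $2\le|S|\le n-2$. The total number of subsets is $2^n$; removing those with $|S|\in\{0,1,n-1,n\}$ removes $2(n+1)$ subsets, leaving $2^n-2(n+1)$ ordered choices; since $S$ and $S^c$ give the same component and $S\ne S^c$ always (as $n\ne 2|S|$ is not forced but when $n$ is even and $|S|=n/2$ we still have $S\ne S^c$), we divide by $2$ to get $2^{n-1}-(n+1)$. I would note that requiring $1\in S$ is precisely a way to pick one representative from each unordered pair, which is why $\mathcal S$ is indexed as it is, so $|\mathcal S|=2^{n-1}-(n+1)$ and each $S\in\mathcal S$ contributes one component $Z_S$.

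**Main obstacle.** The delicate point is the forward implication in the fixed-point characterization: showing that $\iota[p,q]=[p,q]$ really forces the two-cluster structure, rather than merely some weaker symmetry. One must rule out that a nondiagonalizable or an only-partially-matching $A\in SL(2,\C)$ could conjugate the residues $q_ip_i$ into $p_i^tq_i^t=(q_ip_i)^t$ while permuting the flag lines; the key is that $A$ simultaneously sends each $\langle q_i\rangle$ to some $\langle q_{\sigma(i)}\rangle$ and (via $A^T$) each $\langle p_i^t\rangle$ to the matching one, and a single $2\times 2$ matrix with this much eigenstructure, once stability excludes the straight case, must be diagonalizable with its two eigenlines equal to the two occurring flag directions. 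Getting this argument airtight — and correctly matching it with the residue relation rather than just the flags — is the crux of the proof; the manifold and counting parts are then routine given Theorem~\ref{git} and the earlier sections.
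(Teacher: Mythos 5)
Your overall strategy --- characterize the fixed points as the two-cluster configurations $Z_S$, identify $\mathcal S$ with unordered partitions, and count --- is the same as the paper's, and the component count and the Lagrangian/parameter-count argument for $\dim_{\R} Z_S=2(n-3)$ are sound. But the fixed-point characterization, which you yourself flag as the crux, has concrete errors in both directions as written. In the forward direction there is no permutation $j=\sigma(i)$ and eigendirections of $A$ are not the relevant object: the fixed-point equations are $q_i^{\,t}=e_i^{-1}p_iA$ and $p_i^{\,t}=A^{-1}q_ie_i$ for the \emph{same} index $i$, and since $p_iq_i=0$ the line $\langle p_i^{\,t}\rangle$ equals $J\langle q_i\rangle$ for the fixed matrix $J=\left(\begin{smallmatrix}0&1\\-1&0\end{smallmatrix}\right)$, so the condition says that each $\langle q_i\rangle$ is a fixed point of the single M\"obius transformation induced by $AJ$. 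To conclude that the $\langle q_i\rangle$ take at most two values you must rule out that $AJ$ acts as the identity on $\C\P^1$; this is where the second equation is needed (combining the two gives $(A-A^T)p_i^{\,t}=0$, hence $A=A^T$, hence $AJ$ is traceless of determinant $1$ and has two distinct eigenvalues --- the paper's analogue is the derivation of $A\bar A=\mathrm{Id}$ in $K$). Without such a step, ``$A$ can realize at most two eigendirections'' does not yield the claim. Separately, $\lvert S\rvert\ge 2$ is \emph{not} forced by stability: stability only gives $S,S^c\neq\varnothing$; the lower bound $2$ comes from the closure condition $\sum_{i\in S}b_ic_i=0$ of \eqref{eq:zero} together with $b_i,c_i\neq 0$.

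In the converse direction your candidate group element is wrong. In coordinates adapted to the two flag lines one needs $A^{-1}\langle q_i\rangle=\langle p_i^{\,t}\rangle$, and these are the two \emph{different} coordinate axes, so $A$ must swap the axes; a diagonal $A=\mathrm{diag}(\lambda,\lambda^{-1})$ preserves each axis and can never satisfy $p_i^{\,t}=A^{-1}q_ie_i$. The paper uses the anti-diagonal $A=\left(\begin{smallmatrix}0&\sqrt{-1}\\\sqrt{-1}&0\end{smallmatrix}\right)$ together with $e_i=(b_i/c_i)\sqrt{-1}$ for $i\in S$ and $e_i=(a_i/d_i)\sqrt{-1}$ for $i\in S^c$. (If you work in $K$ rather than $K^{\C}$, you must also first observe that the real moment map forces the two flag lines to be Hermitian-orthogonal before an $SU(2)$ element can place them on the coordinate axes.) All of this is repairable and the architecture matches the paper's, but as written the central step does not close.
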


\begin{proof}
%{} From the isotropy weights of the $S^1$--action given in Theorem
%\ref{thm:weights} it follows immediately that, if $M(\alpha)$ is nonempty, then
%it is a connected component of the fixed point set of the $\Z/2\Z$--action. 
%Furthermore, the connected components of the complement $X(\alpha)^{\Z/2\Z} 
%\setminus M(\alpha)$ are parametrized by the elements $S$ of $\mathcal S'(\alpha)$ and have dimension
%$$
%\dim Z_S \,=\, 2((n-1)-\lvert S \rvert )+ \dim X_S \,=\, 2(n-3)\, .
%$$ 
%Therefore, it remains to show that each connected component $Z_S$ of 
%$X(\alpha)^{\Z/2\Z}\setminus M(\alpha)$ can be described as
%$$ Z_S \,=\, \big\{[p,q] \in X(\alpha) \mid S \text{ and } S^c\, \text{are 
%straight at }\, (p,q) \big\}\, .$$ 

Suppose that $[p,q]\,\in\, X_0^n$ is a fixed point of $\iota$. Then there 
exists 
an element $$[A;e_1,\cdots,e_n]\,\in\, K\setminus \{Id\}$$ such that
$$
e_i^{-1} p_i A \,=\, q_i^{\,t} \quad \text{and}\quad A^{-1}q_i e_i \,=\,p_i^{\,t}, 
\,\,\text{for} \,\, i\,=\,1,\ldots, n\,.
$$
In particular,
\begin{equation}\label{eq:fp0}
q_i= A p_i^{\, t} e_i^{-1} \quad \text{and} \quad  (A\bar{A})^T p_i^{\,t}=p_i^{\,t},  
\end{equation}
and so $ p_i^{\,t} $ is an eigenvector of $(A\bar{A})^T$ associated to the eigenvalue $1$, for all $i=1,\ldots,n$. Since $(A\bar{A})^T\in SU(2)$, we conclude that 
$$
A\bar{A}= \rm{Id}.
$$
Writing 
$$
A\,=\,\left( \begin{array}{rc} \alpha & \beta \\ -\bar{\beta} & \bar{\alpha} \end{array} \right),
$$
with $\alpha,\beta\in \C$ such that $\lvert \alpha\rvert^2+\lvert \beta\rvert^2=1$,
we have
$$
{\rm Id} =A\,\bar{A} \,=\,\left( \begin{array}{rc} \alpha & \beta \\ -\bar{\beta} & \bar{\alpha} \end{array}  \right) \left( \begin{array}{rc} \bar{\alpha} & \bar{\beta} \\ - \beta & \alpha \end{array}  \right)  = \left( \begin{array}{rc} \lvert \alpha\rvert^2 - \beta^2 & \alpha \bar{\beta} + \alpha \beta \\ -( \overline{\alpha \beta} + \bar{\alpha} \beta)  & \lvert \alpha\rvert^2 -\bar{\beta}^2\end{array} \right),
$$
and so
\begin{equation}\label{eq:fp2}
 A\,=\,\left( \begin{array}{rc} \alpha & k \sqrt{-1}\\ k \sqrt{-1} & \bar{\alpha} \end{array} \right), 
\end{equation}
for some $\alpha \in \C$ and $k\in \R$ such that $\lvert{\alpha}\rvert^2 + k^2 =1$.

Assuming, without loss of generality, that  
$$ 
p_1 = \left( \begin{array}{ll} 0 & b \end{array}\right) \quad \text{and}\quad q_1= \left(\begin{array}{c} c \\ 0 \end{array} \right) 
$$
with $b,c\in \C\setminus\{0\}$ such that $\lvert b\rvert=\lvert c\rvert$, we have from \eqref{eq:fp0} and \eqref{eq:fp2} that 
$$
\left(\begin{array}{c} c \\ 0 \end{array} \right)  = \left( \begin{array}{rc} \alpha & k \sqrt{-1}\\ k \sqrt{-1} & \bar{\alpha} \end{array} \right) \left(\begin{array}{c} 0 \\ b \end{array} \right) e_1^{-1}
$$
and so $\alpha=0$, impliying that 
\begin{equation}\label{eq:fp3}
 A\,=\,\left( \begin{array}{rc}0 & \pm \sqrt{-1}\\ \pm \sqrt{-1} & 0 \end{array} \right). 
\end{equation}
Then, writing, as usual,
$$
q_i= \left(\begin{array}{c} c_i \\ d_i \end{array} \right) \quad \text{and}\quad p_i = \left( \begin{array}{ll} a_i & b_i \end{array}\right), \quad \text{for $i=2,\ldots, n$,}
$$
with $(a_i,b_i),(c_i,d_i)\in \C^2\setminus\{0\}$, we have, from \eqref{eq:fp0},
$$
 \left(\begin{array}{c} c_i \\ d_i \end{array} \right)= \left( \begin{array}{rc}0 & \pm \sqrt{-1}\\ \pm \sqrt{-1} & 0 \end{array} \right)  \left(\begin{array}{c} a_i \\ b_i \end{array} \right) e_i^{-1 } = \left(\begin{array}{c} \pm b_i e_i^{-1}\sqrt{-1} \\ \pm a_i e_i^{-1} \sqrt{-1}\end{array} \right).
$$ 
Then, by \eqref{complex1}, we conclude that $a_ib_i=0$ and so
there exists an index set 
$S\subset \{1,\ldots, n \}$ with $1 \in S$ such that
\begin{align} \label{eq:Z0} 
p_i & = \left( \begin{array}{ll} 0 & b_i \end{array}\right), \quad q_i= \left(\begin{array}{c} c_i \\ 0 \end{array} \right), \forall i\in S \\ \nonumber 
p_i & = \left( \begin{array}{ll} a_i & 0 \end{array}\right), \quad q_i= \left(\begin{array}{c} 0 \\ d_i \end{array} \right),\forall i \in S^c.
\end{align}
Since $\lvert q_i\rvert = \lvert p_i\rvert$, we conclude that
\begin{equation}
\label{eq:Z}
\lvert c_i\rvert = \lvert b_i\rvert\,\,\text{for all $i\in S$} \quad \text{and} \quad \lvert d_i\rvert=  \lvert a_i\rvert \,\, \text{for all $i\in S^c$}.
\end{equation}

%--------------------------------

%Since $\lvert q_i\rvert^2 - \lvert p_i \rvert^2=2\alpha_i$, we have $q_i\neq 0$ 
%for all $i=1,\cdots,n$ and so $q_i$ is an eigenvector of $A$ with eigenvalue 
%$e_i$. Moreover, since $[p,q]\in X(\alpha)^{\Z/2\Z}\setminus M(\alpha)$, there 
%exists an integer $i_0\,\in\, \{1,\cdots, n \}$ such that $p_{i_0}\neq 0$, and so 
%$p_{i_0}^*$ is 
%an eigenvector of $A$ with eigenvalue $-e_{i_0}$. Hence, assuming that $A$ is diagonal, we have 
%$$
%A\,=\,\left( \begin{array}{cc} e_{i_0} & 0 \\ 0 & -e_{i_0} \end{array} 
%\right)
%$$
%with $e_{i_0}=\pm \sqrt{-1}$. We conclude that there exists an index set 
%$S\subset \{1,\cdots, n \}$ with $i_0 \in S$ such that
%\begin{align} \label{eq:Z0} 
%p_i & = \left( \begin{array}{ll} 0 & b_i \end{array}\right), \quad q_i= \left(\begin{array}{c} c_i \\ 0 \end{array} \right), \forall i\in S \\ \nonumber 
%p_i & = \left( \begin{array}{ll} a_i & 0 \end{array}\right), \quad q_i= \left(\begin{array}{c} 0 \\ d_i \end{array} \right),\forall i \in S^c.
%\end{align}
%Since $\lvert q_i\rvert^2 - \lvert p_i\rvert^2=2\alpha_i$, we conclude that
%\begin{equation}
%\label{eq:Z}
%\lvert c_i\rvert^2 - \lvert b_i\rvert^2=2\alpha_i\,\,\text{for all $i\in S$} \quad \text{and} \quad \lvert d_i\rvert^2 - \lvert a_i\rvert^2=2\alpha_i \,\, \text{for all $i\in S^c$}.
%\end{equation}
Moreover, since $\sum_{i=1}^n ( q_i q_i^* - p_i^*p_i)_0 =0$, we obtain by \eqref{real2} that
\begin{equation}\label{eq_Z2}
\sum_{i\in S} |c_i|^2 = \sum_{i\in S^c}  |a_i|^2. 
\end{equation}
On the other hand, since $\sum_{i=1}^n (q_ip_i)_0 \,=\,0$, we have by \eqref{complex2} that 
\begin{equation}\label{eq:zero}
\sum_{i \in S} b_i c_i \,=\,\sum_{i \in S^c} a_i d_i \,=\, 0\, .
\end{equation}
%If $1\in S$, then we work with $S$. In particular, 
Since by \eqref{eq:Z0}  we have $b_i,c_i\neq 0$ for all $i\in S$ (as $p_i,q_i\neq 0$) and 
$1\in S$, 
it follows from \eqref{eq:zero}  that  $S$ has cardinality at least two. On the other hand, since $\{1,\ldots,n\}$ is not straight at $(p,q)$ (by stability condition $(ii)$ in Definition~\ref{stability}), we have that $S^c\neq \varnothing$. Since by \eqref{eq:Z0}  we have $a_i,d_i\neq 0$ for all $i\in S$ (as $p_i,q_i\neq 0$), it follows from \eqref{eq:zero}  that $S^c$ has cardinality at least two. Ws conclude that $S\in \mathcal{S}$ and $[p,q]\in Z_S$.

Conversely, given any subset $S\in \mathcal{S}$, we have that all the points in $Z_S$ are fixed by $\iota$. Indeed, if $[p,q]\in Z_S$, then $S$ and $S^c$ are straight at $(p,q)$ and so we can assume that
\begin{align*}
p_i & = \left( \begin{array}{ll} 0 & b_i \end{array}\right), \quad q_i= \left(\begin{array}{c} c_i \\ 0 \end{array} \right), \forall i\in S \\ \nonumber 
p_i & = \left( \begin{array}{ll} a_i & 0 \end{array}\right), \quad q_i= \left(\begin{array}{c} 0 \\ d_i \end{array} \right),\forall i \in S^c.
\end{align*}
Then 
$$
\iota[p,q]=[q^t,p^t]=[(p,q)\cdot [A;e_1,\ldots,e_n]]=[p,q],
$$ 
with 
$$
A=\,\left( \begin{array}{rc}0 & \sqrt{-1}\\  \sqrt{-1} & 0 \end{array} \right)\in SU(2) 
$$
and 
$$
e_i= \frac{b_i}{c_i}\sqrt{-1}\in S^1 \quad \text{if $i\in S$}\quad \text{and}\quad e_i= \frac{a_i}{d_i}\sqrt{-1} \quad \text{if $i\in S^c$}.
$$
% $1\notin S$, we consider $S^c$ instead. 
%Since by \eqref{eq:Z0}  we have $a_i,d_i\neq 0$ for all $i\in S^c$ (as $p_i,q_i\neq 0$) and $1\in S^c$, 
%t follows from \eqref{eq:zero}  that $S^c$ has cardinality at least two.

Finally, since for every subset $S\subset \{1,\ldots,n\}$, we have that either $1\in S$ or $1\in S^c$ , the number of subsets in $\mathcal{S}$  is 
$$
\frac{1}{2} \sum_{k=2}^{n-2} \left( \begin{array}{l} n \\ k \end{array} \right) = 2^{(n-1)}-(n+1).
$$
%If there is no short set of cardinality $n-1$, then there are exactly $n$ short sets of cardinality $1$ and so
%$$
%\lvert \mathcal{S}^\prime(\alpha) \rvert = 2^{(n-1)}-(n+1).
%$$
%Moreover, in this case, all the components $Z_S$ are non-compact. If, on the other hand, there is a short set $\widetilde{S}$ of cardinality $n-1$, then there are only $n-1$ short sets of cardinality $1$ and then the number of elements in $\mathcal{S}^\prime(\alpha)$ is
%$$
%\lvert \mathcal{S}^\prime(\alpha) \rvert = 2^{(n-1)}-n.
%$$
%However, in this case, $M(\alpha)$ is empty and $Z_{\widetilde{S}}$ is compact. We conclude that, in both cases, the number of non-compact components of $X(\alpha)^{S^1}$ is $2^{(n-1)}-(n+1)$ and that there is exactly one compact component (which is either $M(\alpha)$ or $Z_{\widetilde{S}}$). 
\end{proof}
%------------------------
%
%Each manifold $Z_S$, being a component of $X_0^{\iota}$, is symplectic and 
%invariant under the circle action in \eqref{action}. Hence, whenever $\lvert 
%S\rvert \neq n-1$, we obtain an effective Hamiltonian circle action on $Z_S$
%(the action factors through the  quotient of $S^1$ by $\Z/2\Z$). The corresponding moment map then 
%coincides with the restriction of $\frac{1}{2}\phi$ to $Z_S$. The only 
%critical submanifold of this map is $X_S$ where it attains its minimum value. Consequently, we have the following results.
%
%\begin{theorem} \label{thm:defret}
%Each manifold $X_S\,\cong\, \C \P^{\lvert S \rvert -2}$ is a deformation 
%retraction of $Z_S$. In particular, $Z_S$ is simply connected.
%\end{theorem}
%
%\begin{theorem}\label{thm:Poinc}
%Let $P_t(M)$ be the Poincar\'{e} polynomial of $M$. Then
%$$
%P_t(Z_S)\,=\,P_t(X_S)\,=\,P_t(\C \P^{\lvert S \rvert-2})\,=\, 1+t^2+\cdots + 
%t^{2(\lvert S \rvert-2)}\, .
%$$
%\end{theorem}

Using the isomorphism in \eqref{eq:isom} we conclude that the fixed point set of the involution \eqref{eq:invPHB} on the moduli space of quasi-parabolic $SL(2,\C)$-Higgs bundles $\mathcal{H}_0^n$ is formed by the  quasi-parabolic $SL(2,\R)$-Higgs bundles in $\mathcal{H}_0^n$.

\begin{theorem}\label{thm:0.1}
The fixed-point set of the involution $\Theta:\mathcal{H}_0^n\to \mathcal{H}_0^n$ in \eqref{eq:invPHB} on the space of 
quasi-parabolic $SL(2,\C)$-Higgs bundles $\mathcal{H}_0^n$ is
$$ \mathcal{H}_{0,n}^{\R}\,: =\, \bigcup_{{\tiny \begin{array}{c}1\in S \subset\{1,\ldots,n\} \\ 2\leq \lvert S\rvert \leq n-2\end{array}}} \mathcal{Z}_S$$
where
$\mathcal{Z}_S\,\subset\, \mathcal{H}_0^n$ is formed by the quasi-parabolic $SL(2,\R)$-Higgs 
bundles $\mathbf{E}\,=\,(E,\Phi)\in  \mathcal{H}_0^n$ such that
\begin{enumerate}
\item[(i)] the quasi-parabolic vector bundle $E$ admits 
a direct sum decomposition 
$$E\,=\,L_0\oplus L_1,$$
where $L_0$ and $L_1$ are trivial line bundles over $\C \P^1$
and the quasi-parabolic structure at $D$ is given by
$$
E_x:=E_{x,1}\supset E_{x,2}\supset\{0\},
$$
with $E_{x,2}=(L_0)_x$ for $x\in \{x_i\in D: \, i\in S\}\subset D$ and  $E_{x,2}=(L_{1})_x$ otherwise;
\item  the Higgs field  is of the form
$$
\Phi=\left(\begin{array}{cc} 0 & \alpha \\ \beta & 0 \end{array} \right):E\to   E \otimes K_{\C \P^1}(D)
$$
with $\beta = 0$ on the fibers over $x_i\in D$ such that $i\in S$ and $\alpha=0$ on those over $x_i$ with $i\in S^c$.
(i.e. it is either upper or lower triangular with respect to the above decomposition, according to whether $i$ is in $S$ or in $S^c$). 
\end{enumerate}
Moreover each $\mathcal{Z}_S$ is a non-compact manifold of dimension $2(n-3)$ and $\mathcal{H}_{0,n}^{\R}$ has 
$$2^{(n-1)}-(n+1)$$ 
connected components.
\end{theorem}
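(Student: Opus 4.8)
The plan is to deduce this statement from Theorem~\ref{thm:invfixedpoints} by transporting everything along the isomorphism $\mathcal I\colon X_0^n\to\mathcal H_0^n$ of Theorem~\ref{thm:main}. As observed in the remark preceding Theorem~\ref{thm:invfixedpoints}, the map $\mathcal I$ intertwines the involution $\iota$ of \eqref{eq:inv} with the involution $\Theta$ of \eqref{eq:invPHB}, that is $\mathcal I\circ\iota=\Theta\circ\mathcal I$. Consequently $\mathcal I$ sends the fixed-point set $(X_0^n)^{\iota}=\bigcup_{S\in\mathcal S}Z_S$ bijectively onto the fixed-point set of $\Theta$, and since $\mathcal I$ is a biholomorphism and $\Theta$ is holomorphic it restricts to a homeomorphism from each $Z_S$ onto its image. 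So the whole statement follows once we check that $\mathcal I(Z_S)=\mathcal Z_S$ for every $S\in\mathcal S$: the description of $\mathcal H_{0,n}^{\R}$ as the union of the $\mathcal Z_S$, the fact that each is a non-compact manifold of dimension $2(n-3)$, and the count $2^{n-1}-(n+1)$ of connected components are then immediate transcriptions of Theorem~\ref{thm:invfixedpoints}.

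To see $\mathcal I(Z_S)\subseteq\mathcal Z_S$, I would feed into $\mathcal I$ the normal form produced in the proof of Theorem~\ref{thm:invfixedpoints}: for $[p,q]\in Z_S$ one may, after acting by $K^{\C}$, assume $p_i=(0,b_i)$, $q_i=(c_i,0)^t$ for $i\in S$ and $p_i=(a_i,0)$, $q_i=(0,d_i)^t$ for $i\in S^c$. Then the flag line $\langle q_i\rangle$ attached by $\mathcal I$ to $x_i$ is the constant line $\langle(1,0)^t\rangle$ for $i\in S$ and $\langle(0,1)^t\rangle$ for $i\in S^c$; putting $L_0:=\C\P^1\times\langle(1,0)^t\rangle$ and $L_1:=\C\P^1\times\langle(0,1)^t\rangle$ realizes $E_{(p,q)}=L_0\oplus L_1$ with the quasi-parabolic structure of item~(i), and $L_1=L_0^{-1}$ because $\det E_{(p,q)}$ is trivial, so this is a quasi-parabolic $SL(2,\R)$-Higgs bundle as in Definition~\ref{def:sl2RpHb} with $L$ trivial. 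For the Higgs field one uses $\text{Res}_{x_i}\Phi_{(p,q)}=q_ip_i$, which is strictly upper triangular with respect to $L_0\oplus L_1$ when $i\in S$ and strictly lower triangular when $i\in S^c$; writing $\Phi_{(p,q)}=\left(\begin{array}{cc}\varphi_{00}&\alpha\\ \beta&\varphi_{11}\end{array}\right)$, the strongly quasi-parabolic condition at the marked points forces $\varphi_{00}$ and $\varphi_{11}$ to vanish at every point of $D$, hence to be holomorphic sections of $K_{\C\P^1}$ and therefore zero, while $\beta$ must vanish at the $x_i$ with $i\in S$ and $\alpha$ at the $x_i$ with $i\in S^c$. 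This is precisely item~(ii). The reverse inclusion $\mathcal Z_S\subseteq\mathcal I(Z_S)$ is the same bookkeeping run backwards: for $\mathbf E\in\mathcal Z_S$ the flag lines are the constant fibres of $L_0$ over the points indexed by $S$ and of $L_1$ over the rest, so in a global trivialization the representatives $q_i$ are proportional within $S$ and within $S^c$, i.e. $S$ and $S^c$ are straight, and stability is automatic because $2\le|S|\le n-2$ keeps $\{1,\dots,n\}$ non-straight; hence $\mathcal I^{-1}(\mathbf E)\in Z_S$.

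Putting these together, $\mathcal I$ identifies each $Z_S$ with $\mathcal Z_S$, and Theorem~\ref{thm:invfixedpoints} transfers verbatim: $\mathcal Z_S$ is a non-compact manifold of dimension $2(n-3)$, $\mathcal H_{0,n}^{\R}=\bigcup_{S\in\mathcal S}\mathcal Z_S$, and the number of components is $\tfrac{1}{2}\sum_{k=2}^{n-2}\binom{n}{k}=2^{n-1}-(n+1)$, the factor $\tfrac{1}{2}$ and the condition $1\in S$ accounting for the symmetry $Z_S=Z_{S^c}$. The step I expect to be the main obstacle is the middle one: verifying carefully that the nilpotency-of-residue and strongly-quasi-parabolic conditions at the two types of marked points force exactly the triangular shape of $\Phi$ demanded in item~(ii), and in particular that the diagonal part of $\Phi$ vanishes identically — which rests on the elementary but essential fact that a section of $K_{\C\P^1}(D)$ vanishing at all $n$ points of $D$ must be zero. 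Everything else is a direct translation of Theorems~\ref{thm:main} and~\ref{thm:invfixedpoints}.
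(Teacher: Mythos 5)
Your proposal is correct and takes the same route the paper does: the paper gives no separate proof of this theorem, simply asserting it as the transport of Theorem~\ref{thm:invfixedpoints} through the isomorphism $\mathcal I$ of Theorem~\ref{thm:main} together with the intertwining $\mathcal I\circ\iota=\Theta\circ\mathcal I$ established in the remark of Section~\ref{sec:inv}. Your additional bookkeeping (the normal form for $Z_S$, the triangularity of the residues $q_ip_i$, and the vanishing of the diagonal of $\Phi$ via $H^0(\C\P^1,K_{\C\P^1})=0$) correctly supplies the details the paper leaves implicit.
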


%Going back to the space of parabolic Higgs bundles $\mathcal{H}(\beta)$
%and using the isomorphism in \eqref{eq:isom}, we obtain 
%from \eqref{eq:res} and \eqref{eq:Z0} that the fixed-point set 
%$\mathcal{H}(\beta)^{\Z/2\Z}$ of the involution in \eqref{eq:invPHB} is described 
%as in Theorem~\ref{thm:0.1}.

\section{Null Polygons in Minkowski $3$-space}\label{sec:NullMink}

Let $\R^{2,1}$ be the \emph{Minkowski $3$-space}, consisting of $\R^3$ equipped with the signature 
$(-,-,+)$-inner product given by
$$
v \circ w \,=\, -v_1w_1-v_2w_2+v_3w_3,
$$
The \emph{Minkowski norm} of a vector $v\in 
\R^{2,1}$ is 
$$
\lvert\lvert v\rvert\rvert_{2,1}\,=\, \sqrt{\lvert v\circ v \rvert}\ 
$$
and the vectors $v\in \R^{2,1}$ are classified according to the sign of $v \circ v 
$. Those $v$ for which  $v\circ v =0$ are called  \emph{light-like} or \emph{null} vectors and form the \emph{light cone}. If $ v\circ v \,>\, 0$, then 
$v$ is a \emph{time-like} vector and, if $v\circ v \,<\,0$,  it is 
a \emph{space-like} vector. A null vector is said to be lying in the \emph{future}
(respectively \emph{past}) light cone if $v_3>0$ (respectively $v_3<0$).

The \emph{Minkowski cross product} $\dot{\times}$ on $\R^{2,1}$ is defined as 
$$
v\dot{\times} w\, :=\, \det \left( \begin{array}{rrr} -e_1 & -e_2 & e_3 \\ v_1 & 
v_2 & v_3 \\ w_1 & w_2 & w_3 \end{array}\right),
$$
for  $v=(v_1,v_2,v_3), w=(w_1,w_2,w_3)$ and $e_1,e_2,e_3$ the standard 
unit vectors in $\R^3$ and the Lie algebra $(\R^3,\dot{\times})$ is isomorphic to 
$\mathfrak{su}(1,1)$ via the map
$$
\left(\begin{array}{l} x \\ y \\ t \end{array} \right)\,\longmapsto\,\frac{1}{2} 
\left( \begin{array}{cc} -\sqrt{-1} t & x + \sqrt{-1} y \\ x - \sqrt{-1} y & 
\sqrt{-1} t \end{array} \right)\, .
$$
Note that  $\mathfrak{su}(1,1)^*\simeq \sqrt{-1} \cdot \mathfrak{su}(1,1)$ is also identified with $\R^3$.

Under this identification, the Minkowski inner product $(\circ)$ corresponds to the pairing between $\mathfrak{su}(1,1)^*$ and  $\mathfrak{su}(1,1)$ given by
$$(A,B)\, \longmapsto\, 2 \sqrt{-1}\,\cdot  \mathrm{trace}(AB),$$ 
for $A\in \mathfrak{su}(1,1)^*$ and  $B\in \mathfrak{su}(1,1)$.

The coadjoint action of (the non-compact group)  ${\rm SU}(1,1)$  on $\frak{su}(1,1)^* \simeq \R^{2,1}$
is defined by
$$
(A \cdot u) \circ v = u \circ \left( A^{-1} v A \right)
$$ 
for every  $v\in \frak{su}(1,1)$, where $A \in SU(1,1)$ and $u\in \frak{su}(1,1)^*$. In particular, if
$$A=  \left(\begin{array}{rr} \alpha & \beta  \\\bar{\beta} & \bar{\alpha}  \end{array} \right) \in SU(1,1)$$ 
(with $\alpha,\beta \in \C$  such that $\lvert \alpha\rvert^2-\lvert \beta \rvert^2=1$), 
then $A\cdot u$ corresponds to multiplication by the $3\times 3$-matrix
\begin{equation}\label{eq:matrix} 
A_{\alpha,\beta}:= \left(\begin{array}{rrr} \rm{Re}\, (\alpha^2 -\beta^2) & -\rm{Im} \,(\alpha^2+\beta^2) & -2\, \rm{Im} \,(\alpha\beta) \\
\rm{Im}\, (\alpha^2 -\beta^2) & \rm{Re} \,(\alpha^2+\beta^2) & 2\, \rm{Re} \,(\alpha\beta) \\
2\, \rm{Im} \,(\alpha\bar{\beta}) & 2 \,\rm{Re} \,(\alpha\bar{\beta}) & \lvert \alpha\rvert^2 +\lvert \beta\rvert^2
\end{array} \right).
\end{equation}

The ${\rm SU}(1,1)$ coadjoint orbits in $\R^{2,1}$ are subsets of the surfaces 
$$-x^2-y^2+t^2\,=\,R,\quad R\in \R.$$ 
If $R>0$ one obtains a two-sheeted hyperboloid and each sheet is a coadjoint orbit.
If $R<0$ one obtains a one-sheeted hyperboloid which is also a coadjoint orbit.
If $R=0$ one obtains three distinct coadjoint orbits: the origin and the two components  $C^\pm$ of the light cone defined by
$$
t= \pm \sqrt{x^2+y^2}.
$$
We will call $C^\pm$ the \emph{future} and \emph{past light cones}. 

Each coadjoint orbit of $SU(1,1)$ is a symplectic manifold having  an invariant symplectic structure (the Kostant--Kirillov
form).  Moreover, the action of $SU(1,1)$ is Hamiltonian and the corresponding moment map is the inclusion map to $\frak{su}(1,1)^*$.

We will study the geometry of the symplectic 
quotients of  products of several copies of future and past light cones $C^\pm$
with respect to the diagonal ${\rm SU}(1,1)$-action.
For that, let us fix two positive integers $k_1,k_2$ with 
$k_1+k_2\,=\,n$. We will consider \emph{null polygons} in Minkowski $3$-space that have the 
first $k_1$ edges in the past light cone and the last $k_2$ edges in the future
light cone. A 
closed null
polygon is one whose sum of the first $k_1$ sides in the past light cone is symmetric to the sum of the last $k_2$ sides in the future light cone. The space  of all such closed null polygons can be identified with the zero level set of the moment map
\begin{equation}
\label{eq:mmhyp}
\begin{array}{ccc}
\mu: \mathcal{O}_1\times \cdots \times \mathcal{O}_n & \longrightarrow & 
\mathfrak{su}(1,1)^* \\
(u_1, \ldots, u_n) & \longmapsto & u_1 + \cdots + u_n
\end{array}
\end{equation}
for the diagonal ${\rm SU}(1,1)$--action, where $\mathcal{O}_i\cong 
C^-$ 
is the past light cone if $1 \leq i \leq k_1$, and 
$\mathcal{O}_i\cong C^+$ is the future light cone if $k_1+1 \leq i \leq 
n$, equipped with the Kostant-Kirillov symplectic form on coadjoint orbits. 

Let $\left(\mathcal{O}_1\times \cdots \times \mathcal{O}_n \right)^{\text{reg}}$ be the set of regular points of $\mu$ in $\mathcal{O}_1\times \cdots \times \mathcal{O}_n$. By equivariance,  it is invariant under $SU(1,1)$ and it is easy to check that the action of $SU(1,1)$ on $\left(\mathcal{O}_1\times \cdots \times \mathcal{O}_n \right)^{\text{reg}}$ is free (cf. Remark~\ref{rem:regular_null}). If 
$$\mu^{-1}(0)_{\text{reg}}:=\left(\mathcal{O}_1\times \cdots \times \mathcal{O}_n \right)^{\text{reg}}\cap \mu^{-1}(0)$$
is nonempty then it is a smooth submanifold of $\left(\mathcal{O}_1\times \cdots \times \mathcal{O}_n \right)^{\text{reg}}$ (by the Implicit Function Theorem). Moreover, if $\omega$ is the symplectic form  on $\mathcal{O}_1\times \cdots \times \mathcal{O}_n$ and $x\in \mu^{-1}(0)_{\text{reg}}$, then the kernel of $\omega_x$ on  the tangent space
$$
T_x(\mu^{-1}(0)_{\text{reg}})=\ker (d\mu)_x= \left(T_x (SU(1,1)\cdot x)\right)^{\omega_x}
$$
is
$$
T_x(\mu^{-1}(0)_{\text{reg}})\cap \left(T_x(\mu^{-1}(0)_{\text{reg}}\right)^{\omega_x}  =\{X_x\in T_x(\mathcal{O}_1\times \cdots \times \mathcal{O}_n):\, X\in \frak{su}(1,1)\} .
$$
Hence, $i^* \omega$, where  $i:\mu^{-1}(0)_{\text{reg}}\to \mathcal{O}_1\times \cdots \times \mathcal{O}_n$ is the inclusion map, is a closed $2$-form on $\mu^{-1}(0)_{\text{reg}}$ and the leaves of its null foliation are the orbits of $SU(1,1)$. Consequently, the space
$$
M^{k_1,k_2}_0\,:=\,\mu^{-1}(0)_{\text{reg}}/{\rm SU}(1,1)\, 
$$
is a symplectic manifold.

\begin{rem}\label{rem:regular_null}
A point $u=(u_1,\ldots,u_n)\in  \mathcal{O}_1\times \cdots \times \mathcal{O}_n$  is regular if and only if its $SU(1,1)$-stabilizer is trivial. Indeed, if it exists $A\in SU(1,1)$ such that $A\neq Id$ and $Au_i=u_i$ for  $i=1,\ldots,n$ then $\dim \langle u_1,\ldots, u_n\rangle = 1$ (meaning that the null polygon lies along a line) and the stabilizer of $u$ is $1$-dimensional. 

To see this,
%$$A_{\alpha,\beta}:= \left(\begin{array}{rrr} \rm{Re}\, (\alpha^2 +\beta^2) & -\rm{Im} \,(\alpha^2-\beta^2) & 2\, \rm{Re} \,(\alpha\beta) \\
%\rm{Im}\, (\alpha^2 +\beta^2) & \rm{Re} \,(\alpha^2-\beta^2) & 2\, \rm{Im} \,(\alpha\beta) \\
%2\, \rm{Re} \,(\alpha\bar{\beta}) & -2 \,\rm{Im} \,(\alpha\bar{\beta}) & \lvert \alpha\rvert^2 +\lvert \beta\rvert^2
%\end{array} \right) \in SU(1,1)$$ 
%If $\dim \langle u_1,\ldots, u_n\rangle =2$ then eigenspace of $A$ associated to the eigenvlaue $1$ has dimension $2$ and so, since $\det A=1$ we can assume that $A$ is of the form
%$$
%A =\left(\begin{array}{ccc} 1 & 1 & 0 \\ 0 & 1 & 0 \\ 0 & 0 & 1\end{array} \right)
%$$ 
%\end{rem}
%
%
%\begin{proof} If 
let us consider a matrix $A_{\alpha,\beta}$ as in \eqref{eq:matrix}  fixing a vector $u\in C^\pm$. Using a rotation around the $t$-axis we can assume, without loss of generality, that $u=(0,1,\pm1)$. Then, up to conjugation by a rotation around the $t$-axis, we have   that $A_{\alpha,\beta}$ is of the form
$$
 A_{\alpha,k\sqrt{-1}}:= \left(\begin{array}{rcr}1 & \pm 2 k & -2 k \\
\mp 2 k & 1 - 2\, k^2 & \pm 2\,  k^2\\
-2 \,k & \mp 2 \,k^2  &1+2 \, k^2
\end{array} \right)
$$ 
with $k\in \R$ and so the stabilizer of $u$ has dimension $1$. Moreover, 
%if $A_{\alpha,\beta}\neq Id$ fixes a vector $u\in C^\pm$ then 
the space of eigenvectors of $ A_{\alpha,k\sqrt{-1}}$ associated with the eigenvalue $1$ is always one-dimensional for every $k\neq 0$.

%A matrix $A_{\alpha,\beta}$ as in \eqref{} always has $1$ as an eigenvalue since its characteristic polynomial is
%$$
%p(x):= 1+\left(1-4 \rm{Re}^2(\alpha)\right)x + \left(4 \rm{Re}^2(\alpha)-1\right)^2 x^2 -x^3 = -(x-1)\left(x^2 + (2- 4 \rm{Re}^2(\alpha))x +1\right).
%$$
%If $p$ has a complex root $z_0$ then $\bar{z}_0$ is also a root of $p$ and the eigenspace $E_1$ of $A_{\alpha,\beta}$ associated to $x=1$ has dimension $1$. Rotations around the $t$-axis are examples of such transformations.
%
%If $p$ has two real roots $a_0,b_0$ and they are both different from $1$ then again $\dim E_1=1$. Moreover, if one of them is equal to $1$ then the other is also $1$ since $\det A=1$. 
\end{rem}
We conclude that for $n\geq 4$ and $k_1,k_2\geq 2$ with $k_1+k_2=n$ the space $M^{k_1,k_2}_0$ is nonempty and  its points represent the null polygons  that do not lie along a line. Moreover, the spaces $M^{k_1,k_2}_0$ and $M^{k_2,k_1}_0$ are symplectomorphic. 
Note that if $k_1=1$ or $k_2=1$ the space is empty since the sum of two noncolinear past null vectors is a past timelike vector and the sum of a  past timelike vector with a past null vector is still a past timelike vector.
%
%------------------------
%
%This space is nonempty  is 
%non-empty with $\sum_{i=1}^{k_1}\alpha_i\,\neq\,\sum_{i=k_1+1}^{n}\alpha_i$, every 
%point in $M^{k_1,k_2}(\alpha)$ represents a polygon with a trivial stabilizer. 
%In that situation, the group ${\rm SU}(1,1)$ acts freely and properly on $\mu^{-1}(0)$. 
%Moreover, $0$ is a regular value of the moment map $\mu$ and so the quotient space $M^{k_1,k_2}(\alpha)$ is, for a generic $\alpha$, a smooth symplectic manifold of dimension $2(n-3)$. Note that the spaces
%$$
%M^{k_1,k_2}(\alpha_1,\cdots,\alpha_{k_1},\alpha_{k_1+1},\cdots,\alpha_{n})\quad 
%\text{and} \quad 
%M^{k_2,k_1}(\alpha_{k_1+1},\cdots,\alpha_{n},\alpha_{1},\cdots,\alpha_{k_1})
%$$
%are symplectomorphic by the isomorphism given by the involution of
%${\mathbb R}^{2,1}$ defined by $(x,y,t)\,\longmapsto\, (-x,-y,-t)$. In particular, one has the following result \cite[Proposition 2.1]{F}.

\begin{theorem}
The space $M^{k_1,k_2}_0$ is non-compact.
\end{theorem}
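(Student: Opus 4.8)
The plan is to produce a continuous, $SU(1,1)$-invariant function on $\mu^{-1}(0)_{\text{reg}}$ and show that it is unbounded along an explicit one-parameter family of closed null polygons; this forces $M^{k_1,k_2}_0$ to be non-compact. Throughout I assume, as in the discussion preceding the statement, that $n\ge 4$ and $k_1,k_2\ge 2$, so that $M^{k_1,k_2}_0$ is non-empty.

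First I would introduce the function
$$
\sigma\colon \mu^{-1}(0)_{\text{reg}}\longrightarrow \R,\qquad
\sigma(u_1,\ldots,u_n)\;:=\;s\circ s,\qquad s:=u_1+\cdots+u_{k_1}.
$$
Since the coadjoint action of $SU(1,1)$ on $\mathfrak{su}(1,1)^*\cong\R^{2,1}$ is realized by the linear maps $A_{\alpha,\beta}$ of \eqref{eq:matrix}, which preserve the Minkowski inner product $\circ$, and since $s$ transforms equivariantly (the action is linear), the quantity $s\circ s$ is $SU(1,1)$-invariant; hence $\sigma$ descends to a continuous function on $M^{k_1,k_2}_0$, still denoted $\sigma$. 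A short computation gives $\sigma(u)=2\sum_{1\le i<j\le k_1}u_i\circ u_j$ (each $u_i$ being null), and for two past null vectors $u_i,u_j$ one has $u_i\circ u_j\ge 0$, with equality exactly when they are positively proportional. Thus $\sigma\ge 0$; moreover $\sigma(u)=0$ forces $u_1,\ldots,u_{k_1}$ to be mutually proportional, in which case $s$ is a nonzero past null vector, $-s=u_{k_1+1}+\cdots+u_n$ is a future null vector, $u_{k_1+1},\ldots,u_n$ are mutually proportional as well, and all of $u_1,\ldots,u_n$ lie on a single line through the origin — a point which, by Remark~\ref{rem:regular_null}, does not lie in $\mu^{-1}(0)_{\text{reg}}$. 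Hence in fact $\sigma>0$ on $M^{k_1,k_2}_0$, although for the argument below only continuity of $\sigma$ and $\sigma\ge 0$ are needed.

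Next I would exploit that each component $C^\pm$ of the light cone is invariant under multiplication by positive reals. Fix a point $u^0=(u_1^0,\ldots,u_n^0)\in\mu^{-1}(0)_{\text{reg}}$. For every $\lambda>0$ the tuple $\lambda\cdot u^0:=(\lambda u_1^0,\ldots,\lambda u_n^0)$ again has its first $k_1$ entries in $C^-$ and its last $k_2$ in $C^+$, satisfies $\sum_i\lambda u_i^0=\lambda\cdot 0=0$, and is still not colinear; hence $\lambda\cdot u^0\in\mu^{-1}(0)_{\text{reg}}$ by Remark~\ref{rem:regular_null}. Therefore $\lambda\mapsto[\lambda\cdot u^0]$ is a well-defined curve in $M^{k_1,k_2}_0$, along which
$$
\sigma\bigl([\lambda\cdot u^0]\bigr)\;=\;\lambda^2\,\sigma\bigl([u^0]\bigr)\;\longrightarrow\;+\infty
\qquad\text{as }\lambda\to+\infty .
$$
Since $\sigma$ is a continuous function on $M^{k_1,k_2}_0$ that is unbounded, $M^{k_1,k_2}_0$ cannot be compact.

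The only points requiring care are that $\sigma$ is genuinely well defined and continuous on the quotient — which reduces to the equivariance of $u\mapsto u_1+\cdots+u_{k_1}$ and the $SU(1,1)$-invariance of $\circ$, both immediate from linearity of the coadjoint action — and that the rescaled configurations remain in the regular locus, which follows from the scale-invariance of $C^\pm$ and of colinearity. No deeper obstacle arises; the content is entirely in the observation that, unlike the case of fixed Minkowski edge lengths, null edges can be dilated freely while preserving closedness.
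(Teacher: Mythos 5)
Your proof is correct and follows essentially the same route as the paper: both arguments rest on the observation that the Minkowski length of the partial sum $u_1+\cdots+u_{k_1}$ descends to a continuous, unbounded function on $M^{k_1,k_2}_0$ (you work with its square $\sigma=s\circ s$, the paper with $\ell=\lvert\lvert w\rvert\rvert_{2,1}$). The only cosmetic difference is that you exhibit unboundedness by dilating an arbitrary regular configuration by $\lambda>0$, whereas the paper writes down an explicit one-parameter family of null polygons realizing every value of $\ell$ in $(0,\infty)$.
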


%We give a brief outline of an argument for Theorem \ref{thF}.
\begin{proof}

Let $[v]=[v_1,\ldots,v_n]$ by a polygon in $M^{k_1,k_2}_0$, where $[v]=[v_1,\ldots,v_n]$ with $v_i\in C^-$ for $i=1,\ldots, k_1$ and $v_i\in C^+$ otherwise. Note that the vectors $v_1,\ldots, v_{k_1}$ are not all aligned since, by definition, $(v_1, \ldots, v_n)$ is a regular value of the moment map $\mu$ defined in \eqref{eq:mmhyp} (cf. Remark~\ref{rem:regular_null}). Therefore,  $w:=v_1+\cdots+ v_{k_1}$ is a time-like vector\footnote{The sum of two noncolinear past null vectors is a past timelike vector and the sum of a  past timelike vector with a past null vector is still a past timelike vector.} and we can use a rotation around the $t$-axis followed by a \emph{boost}
$$
T_\phi :=\left(\begin{array}{ccc} 1 & 0 & 0 \\ 0 & \cosh{\phi} & \sinh{\phi} \\ 0 & \sinh{\phi} & \cosh{\phi} \end{array} \right) \in SU(1,1)
$$
along the $y$-direction to place the vector $w$ along the $t$-axis. Let $\ell$ be the Minkowski length of $w$. 
Note that $\ell$ never vanishes and can take any value in $(0,\infty)$. Indeed if we take, for instance, 
$$
v_1:=(0,m,-m), \quad v_2=\cdots=v_{k_1}=\left(0,-\frac{m}{k_1-1},-\frac{m}{k_1-1}\right)
$$ 
and
$$
v_{k_1+1}:=(0,m,m), \quad v_{k_1+2}=\cdots=v_{n}=\left(0,-\frac{m}{k_1-1}, \frac{m}{k_1-1}\right)
$$
with $m\in \mathbb{N}$, then $w= v_1+\cdots +v_{k_1}=(0,0,-2m)$ and $\lvert\lvert w\rvert\rvert_{2,1} = 2m$. 

We can therefore consider the (continuous) map $\ell:M^{k_1,k_2}_0\to (0,\infty)$ which for each $[v]\in M^{k_1,k_2}_0$ gives the Minkowski length of the vector $v_1+\cdots+v_{k_1}$ and the result follows. Note that $\ell$ is the moment map of the circle action obtained by rotating the section of the polygon formed by the first $k_1$ edges around the $t$-axis. This map has no critical values so all level sets are diffeomorphic and we obtain a diffeomorphism between $M^{k_1,k_2}_0$ and $P\times (0,\infty)$ where $P$ is a level set of $\ell$.

\end{proof}

%To define the symplectic structure on $M^{k_1,k_2}_0$ we first consider 
% the \emph{Minkowski cross product} $\dot{\times}$
%$$
%v\dot{\times} w\, :=\, \det \left( \begin{array}{rrr} -e_1 & -e_2 & e_3 \\ v_1 & 
%v_2 & v_3 \\ w_1 & w_2 & w_3 \end{array}\right),
%$$
%for  $v=(v_1,v_2,v_3), w=(w_1,w_2,w_3)$ and $e_1,e_2,e_3$ the standard 
%unit vectors in $\R^3$. 

%This cross product satisfies the usual properties:
%\begin{align*}
%& v \dot{\times} w \,=\, - w \dot{\times} v\\
% & (u\dot{\times} v) \dot{\times} w + (v \dot{\times} w)\dot{\times} u + (w 
%\dot{\times} u)\dot{\times} v\,=\, 0 
%\end{align*}
%The Lie algebra $(\R^3,\dot{\times})$ is isomorphic to 
%$\mathfrak{su}(1,1)$ via the map
%$$
%\left(\begin{array}{l} x \\ y \\ t \end{array} \right)\,\longmapsto\,\frac{1}{2} 
%\left( \begin{array}{cc} -\sqrt{-1} t & x + \sqrt{-1} y \\ x - \sqrt{-1} y & 
%\sqrt{-1} t \end{array} \right)\, .
%$$
%Under this identification, the Minkowski inner product $\circ$ corresponds to 
%$$(A,B)\, \longmapsto\, -2 \cdot \mathrm{trace}(AB).$$ 
%
%The symplectic form on  $C^\pm$ is then given by
%$$
%\omega_u(v,w)\,=u \circ (v\dot{\times} w)\, ,
%$$
%where $u\,\in\, C^\pm$ and $v,w\,\in \,T_u C^\pm$ (we think of $T_u C^\pm$ as the 
%linear 
%subspace of $\R^{2,1}$ orthogonal to $u$ with respect to the Minkowski inner product which, corresponds to the usual tangent space of $C^\pm$ as a submanifold of $\R^3$), and the map 
%in \eqref{eq:mmhyp} is the moment map for the diagonal ${\rm SU}(1,1)$-action and 
%the product symplectic structure.

\section{Null hyperpolygons, quasi-parabolic $SL(2,\R)$-Higgs bundles and null polygons in $\R^{2,1}$}
\label{sec:Mink}

Let $[p,q]\in Z_S$ be a fixed point of the involution \eqref{eq:inv} in the null hyperpolygon space, where $Z_S$ is defined in \eqref{eq:ZS} and consider the vectors $u_i\in \R^3$ given by
$$
u_i \,:=\, \frac{1}{2} (p^*_ip_i -q_i q^*_i)_0 +\frac{\sqrt{(-1)}}{2} (p_i^*q_i^* +q_i p_i)_0\, ,
$$
where we use identifications $\mathfrak{su}(2)^*\cong( \R^3)^* 
\cong \mathfrak{su}(1,1)^*$. Assuming, without loss of generality, that 
$$p_i=\left( \begin{array}{ll}0 & b_i\end{array}\right),\quad  q_i=\left( \begin{array}{l} c_i \\ 0 \end{array}\right)$$ 
with $\lvert b_i\rvert=\lvert c_i\rvert$, for $i \in S$ and
$$p_i=\left( \begin{array}{ll} a_i& 0 \end{array}\right),\quad  q_i=\left( \begin{array}{l} 0 \\ d_i \end{array}\right)$$ 
with $\lvert a_i\rvert=\lvert d_i\rvert$, for $i \in S^c$,
 we have
\begin{equation}
\label{eq:uS}
u_i\,=\,\left( \operatorname{Re}{(b_i c_i)}\, , \operatorname{Im}{(b_i 
c_i)}\, ,-\lvert c_i \rvert^2 \right),\quad \text{for}\quad i\in S
\end{equation}
and
\begin{equation}
\label{eq:uSc}
u_i=\left( \operatorname{Re}{(a_i d_i)},- \operatorname{Im}{(a_i d_i)},\lvert a_i\rvert^2\right),\quad \text{for}\quad i\in S^c.
\end{equation}
Note that for $i\in S$ we have
$$
 u_i\circ u_i \,= - \lvert b_i\, c_i\rvert^2 + \lvert c_i \rvert^4= \lvert c_i\rvert^2 (\lvert c_i\rvert^2-\lvert b_i\rvert^2) = 0,
$$
and, for $i\in S^c$,
$$
 u_i\circ u_i \,= - \lvert a_i\, d_i\rvert^2 + \lvert a_i \rvert^4= \lvert a_i\rvert^2 (\lvert a_i\rvert^2-\lvert d_i\rvert^2) = 0,
$$
and so  the vectors $u_i$  are null vectors in $\R^{2,1}$.
%Similarly, if $i \in S^c$, we have $p_i=\left( \begin{array}{ll}a_i & 0 \end{array}\right)$ and $q_i=\left( \begin{array}{l} 0 \\ d_i \end{array}\right)$, yielding
%\begin{equation}
%\label{eq:uSc}
%u_i=\left( \operatorname{Re}{(a_i d_i)},- \operatorname{Im}{(a_i d_i)},-\frac{\lvert a_i\rvert^2+\lvert d_i \rvert^2}{2}\right)
%\end{equation}
%and
%$$
%u_i\circ u_i \,=\, (\lvert a_i\rvert^2-\lvert d_i \rvert^2)^2/4\,=\,
%\alpha_i^2\, .
%$$
By \eqref{eq_Z2} and \eqref{eq:zero}, we have that
$$
\sum_{i=1}^n u_i\,=\,0,\,
$$
and so the vectors $u_i$ form a closed null polygon in Minkowski $3$-space with 
$\lvert S\rvert$ sides in the past light cone and  $n-\lvert S\rvert$ sides in the future light cone.

Moreover, the vectors $u_i$ are not all aligned. Indeed, if the vectors $u_i$ were colinear for $i\in S$, there would exist $i_0\in S$ and $k_i\in \R^+$ such that
$$
u_i=k_i u_{i_0} \quad \text{for  all $i\in S$.}
$$ 
Then
$$
\sum_{i\in S} b_ic_i= b_{i_0}c_{i_0} \sum_{i\in S} k_i \neq 0,
$$
contradicting \eqref{eq:zero}. We conclude that the vectors $u_i$  form a regular null polygon in $\R^{2,1}$ (cf. Remark~\ref{rem:regular_null}).

\begin{theorem}\label{thm:polMink}
For any $S\,\subset \, \{1,\ldots, n\}$, such that $1\in S$ and $2\leq \lvert S \rvert\leq  n-2$, the components $\mathcal{Z}_S$ 
and $Z_S$, of the fixed-point sets of the involutions  \eqref{eq:invPHB} and 
\eqref{eq:inv} in $\mathcal{H}_0^n$ and $X_0^n$ respectively, are diffeomorphic to the moduli space 
$$
M^{\lvert S \rvert,\lvert S^c \rvert}_0
$$
of closed null polygons in Minkowski $3$-space.
\end{theorem}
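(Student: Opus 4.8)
The plan is to reduce to the null‑hyperpolygon side and build an explicit diffeomorphism. By Theorem~\ref{thm:main} the biholomorphism $\mathcal I\colon X_0^n\to\mathcal H_0^n$ carries the involution \eqref{eq:inv} to \eqref{eq:invPHB} (the remark after \eqref{eq:inv} checks precisely that $\mathcal I$ intertwines the two involutions), so $\mathcal I$ maps $Z_S$ onto $\mathcal Z_S$ and it suffices to construct a diffeomorphism $\Lambda\colon Z_S\to M^{|S|,|S^c|}_0$. Here we read $M^{|S|,|S^c|}_0$ as the symplectic quotient $\mu^{-1}(0)_{\mathrm{reg}}/SU(1,1)$ of the product $\prod_i\mathcal O_i$ in which $\mathcal O_i\cong C^-$ for $i\in S$ and $\mathcal O_i\cong C^+$ for $i\in S^c$; permuting the factors is an $SU(1,1)$‑equivariant symplectomorphism, so this agrees with $M^{|S|,|S^c|}_0$ up to symplectomorphism (cf.\ the text after \eqref{eq:mmhyp}). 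The map $\Lambda$ will send $[p,q]\in Z_S$ to the class $[u_1,\dots,u_n]$ of the vectors $u_i$ defined just before the statement; the discussion there already shows $(u_1,\dots,u_n)$ is a closed null polygon in $\mu^{-1}(0)_{\mathrm{reg}}$ (regular by Remark~\ref{rem:regular_null}) with $|S|$ edges in $C^-$ and $|S^c|$ in $C^+$.

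First I would establish well‑definedness, which is the delicate point. For any $[p,q]\in Z_S$ and any representative $(p,q)\in\mathcal P_0^n$, straightness puts the $q_i$ $(i\in S)$ on a line $\ell_1$ and the $q_i$ $(i\in S^c)$ on a line $\ell_2\ne\ell_1$ (stability, Definition~\ref{stability}), while $p_iq_i=0$; and the argument in the proof of Theorem~\ref{thm:invfixedpoints} (rotate $q_1$ to $\langle e_1\rangle$, then the fixed‑point relation forces the gauge matrix to be anti‑diagonal, hence \eqref{eq:Z0}) shows in fact that $\ell_1\perp\ell_2$ for the Hermitian product. Hence there is $B\in SU(2)$ carrying $(\ell_1,\ell_2)$ to $(\langle e_1\rangle,\langle e_2\rangle)$, so every $K$‑orbit in $Z_S$ meets the block‑form locus
$$\widetilde Z_S:=\{(p,q)\in\mathcal P_0^n:\ p_i=(0,b_i),\ q_i=(c_i,0)^t\ (i\in S);\ p_i=(a_i,0),\ q_i=(0,d_i)^t\ (i\in S^c)\},$$
and two block‑form representatives of the same point of $Z_S$ differ by an element $[\,\mathrm{diag}(\lambda,\bar\lambda);e_1,\dots,e_n]\in K$ (the only $A\in SU(2)$ preserving $\{\langle e_1\rangle,\langle e_2\rangle\}$ without exchanging them being diagonal). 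Under such an element $b_ic_i\mapsto\bar\lambda^2 b_ic_i$ and $a_id_i\mapsto\lambda^2 a_id_i$ while $|c_i|^2,|a_i|^2$ are unchanged, so by \eqref{eq:uS}--\eqref{eq:uSc} the tuple $(u_1,\dots,u_n)$ is rotated by the single rotation of angle $-2\arg\lambda$ about the $t$‑axis, an element of $SO(2)\subset SU(1,1)$. Thus $[u_1,\dots,u_n]$ is independent of all choices, so $\Lambda$ is a well‑defined, manifestly smooth map.

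Next I would produce an explicit smooth inverse. From \eqref{eq:uS} and \eqref{eq:zero} one sees that for $(p,q)\in\widetilde Z_S$ the partial sum $\sum_{i\in S}u_i=(0,0,-\sum_{i\in S}|c_i|^2)$ lies on the negative $t$‑axis. Conversely, given $[u_1,\dots,u_n]\in M^{|S|,|S^c|}_0$, the vector $w:=\sum_{i\in S}u_i$ is past time‑like (a sum of $\ge2$ non‑collinear past null vectors, cf.\ the proof that $M^{k_1,k_2}_0$ is non‑compact), so after acting by $SU(1,1)$ we may assume $w=(0,0,-\ell)$ with $\ell>0$, uniquely up to $SO(2)$‑rotation about the $t$‑axis. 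For such a normalised representative, writing $u_i=(x_i,y_i,-r_i)$ with $r_i=\sqrt{x_i^2+y_i^2}>0$ for $i\in S$, I set $c_i$ to be any scalar with $|c_i|^2=r_i$ and $b_i:=(x_i+\sqrt{-1}\,y_i)/c_i$, and symmetrically for $i\in S^c$; a direct check using $\sum_iu_i=0$ and $w\parallel t$‑axis shows the resulting $(p,q)$ satisfies \eqref{complex1}--\eqref{complex2} and \eqref{real1}--\eqref{real2}, has $|p_i|^2+|q_i|^2=2r_i\ne0$ and is not straight, hence lies in $\widetilde Z_S$, and $\Lambda([p,q])=[u_1,\dots,u_n]$. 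The only freedom in this construction --- the phases of the $c_i$ (and $a_i$) together with the $SO(2)$ in the normalisation of $w$ --- is exactly the residual $K$‑gauge of $\widetilde Z_S$, so it descends to a well‑defined smooth map $M^{|S|,|S^c|}_0\to Z_S$ inverse to $\Lambda$. Therefore $\Lambda$, and hence $\mathcal I|_{Z_S}$ composed with $\Lambda$, is a diffeomorphism $\mathcal Z_S\cong Z_S\cong M^{|S|,|S^c|}_0$; this is consistent with $\dim\mathcal Z_S=\dim Z_S=2(n-3)=2n-2\dim SU(1,1)=\dim M^{|S|,|S^c|}_0$.

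The main obstacle is the well‑definedness step, i.e.\ reconciling the compact structure group $SU(2)$ used to build $X_0^n$ with the non‑compact $SU(1,1)$ used to build $M^{|S|,|S^c|}_0$: \emph{a priori} changing the $K$‑representative of $[p,q]$ acts on $(u_1,\dots,u_n)$ by an arbitrary element of $SO(3)$, which is not a Lorentz transformation. The point that makes the correspondence work is that on the fixed locus $Z_S$ the two flag lines $\ell_1,\ell_2$ are forced to be Hermitian‑orthogonal, so in the block form the residual $SU(2)$‑gauge collapses to the maximal compact $SO(2)\subset SU(1,1)$; the remaining $SU(1,1)$‑freedom on the polygon side is then matched on the hyperpolygon side by the choice of block‑form representative (equivalently, by the normalisation of $w=\sum_{i\in S}u_i$ to the $t$‑axis). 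Everything else is bookkeeping, verifying that the real and complex moment‑map equations \eqref{real},\eqref{complex} correspond under $\Lambda$ to the closedness $\sum_iu_i=0$ of the null polygon.
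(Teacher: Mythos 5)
Your proposal is correct and follows essentially the same route as the paper: the same map $[p,q]\mapsto[u_1,\ldots,u_n]$, the same well-definedness argument (block-form representatives differ by a diagonal residual gauge, which acts on the $u_i$ as a rotation about the $t$-axis, an element of $SU(1,1)$), and the same inverse construction normalizing $w=\sum_{i\in S}u_i$ to the $t$-axis and taking $\lvert c_i\rvert^2=r_i$, $b_ic_i=x_i+\sqrt{-1}\,y_i$. The only cosmetic difference is that you obtain bijectivity by tracking the ambiguity of the inverse construction, whereas the paper proves injectivity directly by showing that any $A_{\alpha,\beta}\in SU(1,1)$ relating two images must have $\beta=0$ via the closing condition $\sum_{i\in S}b_ic_i=0$ --- which is exactly your observation that the stabilizer of the normalized diagonal $w$ is $SO(2)$.
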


\begin{proof} Let  $S\,\subset \, \{1,\ldots, n\}$ be such that $1\in S$ and $2\leq \lvert S \rvert\leq  n-2$. After a suitable reshuffling, 
consider the map $\varphi:Z_S \longrightarrow M^{\lvert S\rvert,\lvert S^c 
\rvert}_0$ defined by \eqref{eq:uS} and \eqref{eq:uSc}. Hence $\varphi([p,q])$ is the element of 
$M^{\lvert S\rvert,\lvert S^c \rvert}_0$ represented by the null polygon in $\R^{2,1}$ whose first $\lvert S \rvert$ 
sides are the vectors $u_i$ given by \eqref{eq:uS}, and the last $n-\lvert S \rvert$ sides  are the vectors $u_i$ given by  \eqref{eq:uSc}. 

%---------------- necessary??? ------------
%
%Note that the pseudo-unitary group ${\rm SU}(1,1)$ is generated by the following
%orientation preserving isometries of the pseudosphere: $A_\theta$ 
%and $T_\phi$, where
%$$
%A_\theta =\left(\begin{array}{ccc} \cos{\theta} & -\sin{\theta} & 0 \\ \sin{\theta} & \cos{\theta} & 0 \\ 0 &0 & 1\end{array} \right),
%$$
%is an Euclidean \emph{rotation} by an angle $\theta$ in the $(x,y)$-plane, and
%$$
%T_\phi =\left(\begin{array}{ccc} 1 & 0 & 0 \\ 0 & \cosh{\phi} & \sinh{\phi} \\ 0 & \sinh{\phi} & \cosh{\phi} \end{array} \right)
%$$
%is a \emph{boost} of rapidity $\phi$ along the $y$-direction\footnote{In special relativity, the rapidity parameter $\phi$ is defined by $\tanh{\phi}=v/c$,
%where $v$ is the velocity.} (cf. \cite{BV} for the details).
%
%-----------------------

Let us first see that $\varphi$ is well defined. For that, consider two 
representatives $(p,q)$ and $(p^\prime,q^\prime)$ of the same class $[p,q]$ in 
$Z_S$. Then there exists $[A;e_1,\ldots,e_n]\,\in\, K$ such that
$$
e_i^{-1}p_i A= p_i ^\prime \quad \text{and} \quad A^{-1} q_i e_i =q_i^{\prime}, 
\quad i=1,\ldots,n.
$$
We can assume, without loss of generality, that $p_i=\left( \begin{array}{ll} a_i & b_i\end{array}\right)$, $p_i^\prime=\left(\begin{array}{ll}a_i^\prime & b_i^\prime\end{array}\right)$ with $a_i=a_i^\prime=0$ for $i \in S$ and $b_i=b_i^\prime=0$ for $i \in S^c$, while 
$q_i=\left( \begin{array}{ll} c_i & d_i \end{array}\right)^t$, $q_i^\prime=\left( \begin{array}{ll} c_i^\prime & d_i^\prime \end{array}\right)^t$
with $d_i\,=\,d_i^\prime\,=\,0$ for $i \in S$ and $c_i\,=\,c_i^\prime=0$ for $i 
\,\in\, S^c$. Hence,
$$
A= \left(\begin{array}{cc}\alpha & 0 \\ 0 & \bar{\alpha} \end{array}\right)
$$
for some $\alpha=e^{\sqrt{-1}\,\theta_0} \in S^1$. Then we have
$$
\left(\begin{array}{l} \operatorname{Re}{(b^\prime_i c^\prime_i)} \\ \\ 
\operatorname{Im}{(b^\prime_i c^\prime_i)} \\ \\ -\lvert c^\prime_i \rvert^2 \end{array}\right) = A_{-2\theta_0} \left(\begin{array}{l}\operatorname{Re}{(b_i c_i)} \\ \\ \operatorname{Im}{(b_i c_i)} \\ \\ -\lvert c_i \rvert^2 \end{array}\right)
\,\,\text{for $i \in S$,}
$$
and
$$
\left(\begin{array}{c} \operatorname{Re}{(a^\prime_i d^\prime_i)} \\ \\ - \operatorname{Im}{(a^\prime_i d^\prime_i)} \\ \\ \lvert a^\prime_i \rvert^2 \end{array}\right) = A_{-2\theta_o} \left( 
\begin{array}{c} \operatorname{Re}{(a_i d_i)} \\ \\ - \operatorname{Im}{(a_i d_i)} \\ \\  \lvert a_i\rvert^2 \end{array} \right)\,\, \text{for $i \in S^c$},
$$
where 
$$ A_{-2\theta_0} = \left(\begin{array}{ccc} \cos{2\theta} & \sin{2\theta} & 0 \\ -\sin{2\theta} & \cos{2\theta} & 0 \\ 0 &0 & 1\end{array} \right) $$ is a rotation around the $t$-axis  (an element of  ${\rm SU}(1,1)$). 

To see that $\varphi$ is injective, let  $[p,q], 
[p^\prime,q^\prime]$ be  two points in $Z_S$ with
$$\varphi([p,q])\,=\,\varphi([p^\prime,q^\prime]).$$ 
Then, without loss of generality,  we can write
$$ p_i=\left( \begin{array}{ll}0 & b_i\end{array}\right), \,\, p_i^\prime=\left( \begin{array}{ll}0 & b_i^\prime \end{array}\right)\quad \text{and} \quad q_i=\left( \begin{array}{l} c_i \\ 0 \end{array}\right),\,\, q_i^\prime=\left( \begin{array}{l} c_i^\prime \\ 0 \end{array}\right), 
$$
for $i= 1,\ldots, \lvert S\rvert $ with 
\begin{equation}\label{eq:zerosum}
\sum_{i=1}^{\lvert S\rvert} b_i c_i=\sum_{i=1}^{\lvert S\rvert}  b^\prime_i c^\prime_i=0
\end{equation} 
(cf. \eqref{eq:zero}) and 
$$ p_i=\left( \begin{array}{ll} a_i & 0 \end{array}\right), \,\, p_i^\prime=\left( \begin{array}{ll} a_i^\prime & 0 \end{array}\right)\quad \text{and} \quad q_i=\left( \begin{array}{l} 0 \\ d_i \end{array}\right),\,\, q_i^\prime=\left( \begin{array}{l} 0 \\ d_i^\prime \end{array}\right), \,\, 
$$
for $i= \lvert S\rvert +1,\ldots,n$,  with 
$$\sum_{i=\lvert S\rvert+1}^n a_i d_i=\sum_{i=\lvert S\rvert +1}^n a^\prime_i d^\prime_i=0,$$ 
and there exists $A_{\alpha,\beta}\in SU(1,1)$ as in \eqref{eq:matrix}  such that
%$$A_{\alpha,\beta}:= \left(\begin{array}{rrr} \rm{Re}\, (\alpha^2 -\beta^2) & -\rm{Im} \,(\alpha^2+\beta^2) & 2\, \rm{Re} \,(\alpha\beta) \\
%\rm{Im}\, (\alpha^2 - \beta^2) & \rm{Re} \,(\alpha^2+\beta^2) & 2\, \rm{Im} \,(\alpha\beta) \\
%2\, \rm{Re} \,(\alpha\bar{\beta}) & -2 \,\rm{Im} \,(\alpha\bar{\beta}) & \lvert \alpha\rvert^2 +\lvert \beta\rvert^2
%\end{array} \right) \in SU(1,1)$$ 
%(with $\lvert \alpha \rvert^2 -\lvert \beta \rvert^2=1$) such that 
%$A_{\theta_0}$ around the $t$-axis by an  angle $\theta_0$, such that
\begin{equation}\label{eq:prime1}
\left(\begin{array}{l} \operatorname{Re}{(b^\prime_i c^\prime_i)} \\ \\ \operatorname{Im}{(b^\prime_i c^\prime_i)} \\ \\ -\lvert c^\prime_i \rvert^2\end{array}\right) = A_{\alpha,\beta} \left(\begin{array}{l}\operatorname{Re}{(b_i c_i)} \\ \\ \operatorname{Im}{(b_i c_i)} \\ \\ -\lvert c_i \rvert^2 \end{array}\right)
\,\,\text{for $i= 1,\ldots, \lvert S\rvert$}
\end{equation}
and
\begin{equation}\label{eq:prime2}
\left(\begin{array}{c} \operatorname{Re}{(a^\prime_i d^\prime_i)} \\ \\ - \operatorname{Im}{(a^\prime_i d^\prime_i)} \\ \\  \lvert d^\prime_i \rvert^2 \end{array}\right) = A_{\alpha,\beta} \left( 
\begin{array}{c} \operatorname{Re}{(a_i d_i)} \\ \\ - \operatorname{Im}{(a_i d_i)} \\ \\ \lvert d_i \rvert^2 \end{array} \right)\,\, \text{for $i= \lvert S \rvert +1,\ldots,n$ }.
\end{equation}

Then, we have 
$$
 b_i^\prime c_i^\prime = - 2 \sqrt{-1}\alpha \beta \lvert c_i \rvert^2 + \left(\alpha^2-\beta^2\right) \, \operatorname{Re}\, (b_ic_i) + \sqrt{-1} (\alpha^2+\beta^2) \,\operatorname{Im}(b_ic_i),
$$
for $i=1,\ldots, \lvert S\rvert$   and so, by \eqref{eq:zerosum},
%$$
%\sum_{i=1}^{\lvert S\rvert} b_i^\prime c_i^\prime = \sum_{i=1}^{\lvert S\rvert} b_i c_i=0,  
%$$
we conclude that $\alpha \beta=0$, implying that $\beta=0$ (since $\lvert \alpha\rvert^2-\lvert \beta\rvert^2=1$).

Consequently,
$$A_{\alpha,\beta}= A_{\alpha,0} = \left(\begin{array}{ccc} \rm{Re}\, \alpha^2 & -\rm{Im} \,\alpha^2 &  0\\
\rm{Im}\, \alpha^2  & \rm{Re} \,\alpha^2 & 0 \\
0 & 0 & 1
\end{array} \right), \quad \text{with $\lvert \alpha\rvert =1$,}
$$ 
is a rotation around the $t$-axis.

Hence
$$
b_i^\prime c_i^\prime \,= \,e^{\sqrt{-1}\, \theta_0} b_i c_i, \quad \text{and} \quad 
\lvert b_i^\prime \rvert^2 = \lvert c_i^\prime \rvert^2 = \lvert b_i \rvert^2 = \lvert c_i\rvert^2, \quad \text{for $i=1,\ldots, \lvert S \rvert$}, 
$$
for some $\theta_0\in [0,2\pi)$ and, by \eqref{eq:prime2}, we have
$$
a_i^\prime d_i^\prime \,=\, e^{-\sqrt{-1}\, \theta_0} a_i d_i, \quad \text{and} \quad 
\lvert a_i^\prime \rvert^2 = \lvert d_i^\prime \rvert^2 = \lvert a_i \rvert^2 = \lvert d_i\rvert^2, \quad \text{for $i=\lvert S\rvert + 1, \ldots, n$}.
$$
Moreover, since 
$$
\vert c_i^\prime\rvert = \lvert c_i\rvert=\vert b_i^\prime\rvert = \lvert b_i\rvert, \quad  \text{for} \quad i=1,\ldots, \lvert S \rvert,
$$ 
and  
$$
\vert a_i^\prime\rvert = \lvert a_i\rvert=\vert d_i^\prime\rvert = \lvert d_i\rvert,\quad  \text{for} \quad i= \lvert S \rvert+1,\ldots,n,
$$ 
we have
$$
c_i^\prime =  e^{\sqrt{-1}\, \gamma_i}c_i , \quad b_i^\prime =  e^{\sqrt{-1}\, (\theta_0-\gamma_i)}b_i \quad \text{for} \quad i=1,\ldots, \lvert S \rvert
$$
and 
$$
a_i^\prime =  e^{\sqrt{-1}\, \phi_i}a_i, \quad d_i^\prime =  e^{-\sqrt{-1}\, (\theta_0+\phi_i)} d_i \quad \text{for} \quad i=1,\ldots, \lvert S \rvert
$$
for some $\gamma_i,\phi_i\in [0,2\pi)$.

We conclude that
$$
p_i^\prime \,= \,p_i A \, e_i^{-1} \quad \text{and} \quad q_i^\prime = A^{-1}q_i \, e_i, \quad 
i\,=\,1,\cdots,n
$$
with $A\,= \left( \begin{array}{cc} e^{-\sqrt{-1}\, \theta_0/2 } & 0 \\ 0 & e^{\sqrt{-1}\, 
\theta_0/2 } \end{array}\right)$, and 
$$
e_i := \left\{ \begin{array}{l} e^{\sqrt{-1}(\gamma_i-\theta_0/2)}, \,\, \text{if} \,\, i=1, \ldots, \lvert S\rvert \\  \\ e^{-\sqrt{-1}(\phi_i+\theta_0/2)}, \,\, \text{if} \,\, i= \lvert S\rvert +1,\ldots, n\end{array}\right.,
$$
implying that $[p,q]=[p^\prime,q^\prime]$.

To  show that $\varphi$ is surjective, let us take 
$[v]\in M^{\lvert S\rvert,\lvert S^c \rvert}_0$, where $[v]=[v_1,\ldots,v_n]$ with $v_i\in C^-$ for $i=1,\ldots, \lvert S\rvert$ and $v_i\in C^+$ otherwise. Note that the vectors $v_1,\ldots, v_{\lvert S\rvert}$ are not all aligned since, by definition, $(v_1, \ldots, v_n)$ is a regular value of the moment map $\mu$ defined in \eqref{eq:mmhyp} (cf. Remark~\ref{rem:regular_null}). Therefore,  $w:=v_1+\cdots+ v_{\lvert S\rvert}$ is a time-like vector
%\footnote{The sum of two noncolinear past null vectors is a past timelike vector and the sum of a  past timelike vector with a past null vector is still a past timelike vector.} 
and we can use a rotation around the $t$-axis followed by a \emph{boost}
%$$
%T_\phi :=\left(\begin{array}{ccc} 1 & 0 & 0 \\ 0 & \cosh{\phi} & \sinh{\phi} \\ 0 & \sinh{\phi} & \cosh{\phi} \end{array} \right) \in SU(1,1)
%$$
along the $y$-direction to place the vector $w$ along the $t$-axis. Hence, we can assume that $[v]$ is  represented by a polygon with the first $\lvert S \rvert$ sides past null vectors $(x_i,y_i,- t_i)$  and the last $n-\lvert S\rvert$ sides future null vectors $(x_i,y_i,t_i)$ with $t_i=\sqrt{x_i^2 + y_i^2}$, satisfying
$$
\sum_{i=1}^{\lvert S\rvert} x_i=\sum_{i=1}^{\lvert S\rvert} y_i =\sum_{i=\lvert S\rvert+1}^n x_i = \sum_{i=\lvert S\rvert+1}^n y_i=0.
$$
%Using the ${\rm 
%SU}(1,1)$-action, we can place the $(\lvert S\rvert +1)$-th vertex 
%on the $t$-axis (so that $\sum_{i=1}^{\lvert S\rvert} v_i$ is a vector along the $t$-axis). Indeed, since
%$u_1+\cdots + u_{\lvert S\rvert}$ are not all alligned,
%
% Therefore,
% we may assume that $[v]$ is represented by a polygon with the first $\lvert S\rvert$ sides being
%$(x_i,y_i,t_i)$ with $t_i>0$, on the positive time-like cone and the
%last $n-\lvert S\rvert$ sides being $(x_i,y_i,-t_i)$ with $t_i>0$, in the past, satisfying the additional conditions
%$$
%\sum_{i=1}^{k_1} x_i=\sum_{i=1}^{k_1} y_i =\sum_{i=k_1+1}^n x_i = \sum_{i=k_1+1}^n y_i=0.
%$$
Then $[v]$ is the image of the hyperpolygon $[p,q]$, with
$$
p_i= \left( \begin{array}{ll}0 & \frac{1}{l_i}(x_i+\sqrt{-1}\, y_i)\end{array}\right), \quad q_i= \left(\begin{array}{l} l_i \\ 0 \end{array}\right) \,\, \text{for $i\in S$},
$$
and 
$$ 
p_i= \left( \begin{array}{ll} \frac{1}{l_i} (x_i-\sqrt{-1} y_i) & 0 \end{array}\right), \quad q_i=\left( \begin{array}{c} 0 \\ l_i \end{array}\right)\,\, \text{for $i\in S^c$},
$$
with 
$$
l_i \,=\, \sqrt{t_i},\quad i\,=\,1,\ldots, n\, .
$$
Note that $[p,q]\in Z_S\subset X_0^n$, since 
$$
\sum_{i\in S} b_i c_i \,=\,\sum_{i=1}^{\lvert S\rvert} (x_i + \sqrt{-1}\, y_i )\,= \,0,
\quad  \sum_{i \in S^c} a_i d_i= \sum_{i=\lvert S\rvert +1}^n ( x_i - \sqrt{-1}\, y_i )=0
$$
and
$$
\lvert b_i \rvert^2 =t_i =\lvert c_i\rvert^2 \quad \text{for all $i \in S$}, \quad
\lvert a_i\rvert^2  = t_i = \lvert d_i\rvert^2, \,\,\text{for all $i \in S^c$} ,
$$
where as usual we write $p_i= \left( \begin{array}{ll} a_i& b_i 
\end{array}\right)$ and $q_i= \left( \begin{array}{ll} c_i& d_i \end{array}\right)^t$, for $i=1,\ldots,n$. 

Note that clearly $\varphi$ and its inverse are differentiable and the result
follows.
\end{proof}
%--------------------------
%
%\begin{rem}
%{\rm Note that when $\lvert S\rvert=n-1$ we obtain that the spaces $M^{n-1,1}(\alpha)$, 
%which we already knew were compact, are, in fact, diffeomorphic to $\C\P^{n-3}$.}
%\end{rem}
%
%Theorem~\ref{thm:polMink} allows us to draw several conclusions on polygon 
%spaces in Minkowski $3$-space which are immediate consequences of Theorem 
%\ref{thm:defret} and Theorem \ref{thm:Poinc}.
%
%\begin{theorem}
%Let $M^{k_1,k_2}(\alpha)$ be the moduli space of closed polygons in Minkowski 
%$3$-space that have the first $k_1$ sides in the future time-like cone and the 
%last $k_2$ in the past, such that the Minkowski length of the $i$-th side is 
%$\alpha_i$. Assume without loss of generality that $\sum_{i=1}^{k_1} \alpha_i\, < 
%\,\sum_{i=k_1+1}^n \alpha_i$. Then,
%\begin{enumerate}
%\item[(i)] $M^{k_1,k_2}(\alpha)$ admits a deformation retraction to $\C \P^{k_1-2}$, and
%
%\item[(ii)] the Poincar\'{e} polynomial of $M^{k_1,k_2}(\alpha)$ is
%$$
%P_t(M^{k_1,k_2}(\alpha))\,=\,P_t(\C \P^{k_1-2})=1+t^2+\cdots+t^{2(\lvert S\rvert 
%-2)}\, .
%$$
%\end{enumerate}
%\end{theorem}

\section{An Example}\label{sec:ex}

As an example, we consider the case where $n=4$. 

%---------------------------

Let $\mathcal{H}_0^4$ be 
the moduli space of quasi-parabolic $SL(2,\C)$-Higgs bundles $(E,\Phi)$ of rank two over $\C \P^1$ at $D$ with $\lvert D\rvert=4$, where the underlying holomorphic vector bundle $E$ is trivial and let us consider the space of null hyperpolygons $X_0^4$.

%--------------

A subset $S$ of $\{1,2,3,4\}$ such that $\lvert S \rvert \geq 2$ and  $\lvert S^c \rvert \geq 2$ must have exactly two elements.  If, in addition $1\in S$, there are 
exactly three possibilities. Let us denote these sets by $S_1$, $S_2$ and 
$S_3$, where $S_j:=\{1,1+j\}$. 

Then the fixed point set of the involution in \eqref{eq:inv} defined in the space of null hyperpolygons $X_0^4$ has 
exactly three connected components $Z_{S_1},Z_{S_2}$ and $Z_{S_3}$ (cf. Theorem~\ref{thm:invfixedpoints}).

By Theorem~\ref{thm:polMink} we know that each component  $Z_{S_i}$
is diffeomorphic to $M^{2,2}_0$, formed by classes of 
closed polygons in Minkowski $3$-space with the first two sides $u_1, u_2$ in 
the past light cone and the last two, namely $u_3$ and $u_4$, in the future light cone. Let us  consider the diagonal vector
$w=u_1+u_2$ connecting the origin to the third vertex of the polygon. Since the vectors $u_1$ and $u_2$ are not aligned,
$w$  is a past time-like vector and we can consider its Minkowski 
length $\ell$. Using a  rotation around the $t$-axis followed by a \emph{boost} we can assume that $w$ lies along the $t$-axis.

For each value of $\ell\in (0,\infty)$ we have a circle of possible classes of polygons obtained by 
rotating the last two sides of the polygon around the diagonal, while fixing the other two. The length $\ell$ of $w$
is the 
moment map for the bending flow obtained by this rotation of the last two sides of the polygon around the diagonal
with a constant angular speed while fixing the other two vectors. Hence, 
$Z_{S_i}$ is a non-compact toric manifold of dimension $2$ with moment map $\ell$. Moreover, $\ell$ has no critical values in $(0,\infty)$ so $Z_{S_i}$ is diffeomorphic to $\C\setminus\{0\}$. 

We conclude that  the fixed point sets  $(X_0^4)^{\iota}$ and $\mathcal{H}_{0,4}^\R$ (the space of quasi-parabolic $SL(2,\R)$-Higgs bundles over $\C \P^1$ with four quasi-parabolic points) of the involutions \eqref{eq:inv} and \eqref{eq:invPHB} have three non-compact components diffeomorphic to $\C\setminus\{0\}$.

\section{Appendix: Existence of the moduli space}\label{appendix}

We will now see that the space  $\mathcal{H}^n_0$ defined in Section~\ref{QPHBs}, of isomorphism classes of stable rank-$2$ quasi-parabolic $SL(2,\C)$-Higgs bundles over $\C \P^1$ at $D$ for which the underlying holomorphic vector bundle is trivial,  is a moduli space and a complex manifold. For this we use the similar proof given by Furuta and Steer for quasi-parabolic bundles in \cite{FS} and adapt it to quasi-parabolic Higgs bundles.

Let $T$ be a  complex manifold. A holomorphic family $(E_t,\phi_t)_{t\in T}$ of  stable rank-$2$  trivial holomorphic quasi-parabolic Higgs bundles over $\C\P^1$ at $D=\{p_1,\ldots,p_n\}$ parametrized by $T$ is
\begin{itemize}
\item a holomorphic family of stable rank-$2$ trivial quasi-parabolic vector bundles $\widetilde{E} \to \C\P^1 \times T$ para\-metrized by $T$ (see \cite{FS})\footnote{i.e a holomorphic rank-$2$ vector bundle $\widetilde{E} \to \C\P^1 \times T$ such that $\widetilde{E}_{\lvert_{ \{ p_i \}\times T}}  \simeq \varphi^* \widetilde{E} \to T$  is a rank-$2$ vector bundle over $T$ (where $\varphi_i:T\to \C\P^1 \times T$ is the map $\varphi_i(t)=(p_i,t)$ for  $i=1,\ldots,n$) and a filtration
$$
\varphi_i^* \widetilde{E} =  \widetilde{E}_{i,1}\supsetneq  \widetilde{E}_{i,2}\supsetneq \{0\}, 
$$ 
for each $i=1,\ldots,n$, where $\widetilde{E}_{i,2} \to T$ is a line subbundle of $\varphi^* \widetilde{E}$.};
\item a holomorphic section $\Phi$ of $SQPar End(\widetilde{E}) \otimes \rho^* K_{\C \P^1} (D)$, where $\rho:\C\P^1 \times T \to \C\P^1$ is the projection map, such that
$$
\Phi\lvert_{\C\P^1\times\{t\}} = \phi_t, \quad \forall t\in T. 
$$
Note that $\Phi$ is an $End(\widetilde{E})$-valued $1$-form on $\C\P^1\setminus D$ as $\rho^* K_{\C\P^1}(D)\subset K_{\C\P^1\times T} (\rho^* D)$.
\end{itemize}
Given such a family we have a set theoretical map 
\begin{align*}
f: T & \to \mathcal{H}^n_0 \\
t & \mapsto [(E_t,\phi_t)]
\end{align*}
classifying isomorphism classes of parametrized bundles. We will start by constructing  a collection of holomorphic families  of stable trivial holomorphic quasi-parabolic Higgs bundles whose images under the map $f$ cover  the 
set ${\mathcal H}_0^n$.

\begin{proposition}\label{prop:appprop1} There exists a finite collection of holomorphic families of stable rank-$2$ trivial holomorphic quasi-parabolic Higgs bundles over $\C\P^1$ at $D$ parametrized by open sets of the form $\C^{n-3}\times (\C^*)^{n-3}$ whose images under the map $f$ cover 
the set $\mathcal{H}^n_0$. 
\end{proposition}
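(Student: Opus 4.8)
Here is how I would go about proving Proposition~\ref{prop:appprop1}.

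The plan is to realise every isomorphism class in $\mathcal H^n_0$ inside an explicit slice of the $K^{\C}$-action and to recognise each such slice as an open subset of $\C^{n-3}\times(\C^*)^{n-3}$. First I would use the isomorphism $\mathcal I$ of \eqref{eq:isom} and Theorem~\ref{thm:main} to describe $\mathcal H^n_0$ concretely: up to isomorphism a member of $\mathcal H^n_0$ is the trivial bundle $\C\P^1\times\C^2$ together with flag lines $\ell_i=\langle q_i\rangle\in\P^1$ at the marked points $x_i$ and the Higgs field determined — uniquely, as in the proof of Theorem~\ref{thm:main} — by residues $N_i=\operatorname{Res}_{x_i}\Phi=q_ip_i$ with $N_i\neq0$ nilpotent with respect to $\ell_i$ (equivalently $p_iq_i=0$, cf.~\eqref{eq:complex0},~\eqref{eq:res}) and $\sum_iN_i=0$; two such data give isomorphic bundles exactly when they differ by the diagonal action of $PGL(2,\C)=SL(2,\C)/\{\pm\mathrm{Id}\}$, and by Remark~\ref{rem:stabilityHiggs} stability reduces to the single condition that the $\ell_i$ are not all equal. (Equivalently, one can work with $X^n_0=\mu_{\C}^{-1}(0,0)^{\mathrm{st}}/K^{\C}$ from \eqref{eq:nullstablequotient} and push the families forward along $\mathcal I$.) Recall moreover that for any tuple $(N_i)$ of matrices with $\sum_iN_i=0$ there is a unique meromorphic $\mathfrak{sl}(2,\C)$-valued $1$-form on $\C\P^1$ with simple poles at $D$ and these residues, and that it depends algebraically on the $N_i$.

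Next I would build the families. Fix an ordered pair $i\neq j$ and work on the locus $\ell_i\neq\ell_j$. On it, use $PGL(2,\C)$ to put $\ell_i=[1:0]$ and $\ell_j=[0:1]$ (this exhausts $PGL(2,\C)$ modulo the diagonal torus), and then use the residual torus to normalise the residue $N_i$ — which is automatically strictly upper triangular and nonzero — to have $(1,2)$-entry equal to $1$; this pins down the torus, so the result is a genuine local section of the $PGL(2,\C)$-quotient (the $PGL(2,\C)$-action on stable data being free, as follows from the freeness of the $K^{\C}$-action on $\mu_{\C}^{-1}(0,0)^{\mathrm{st}}$ established above). The remaining parameters are the positions $\ell_l\in\P^1$ for $l\neq i,j$, written in an affine chart of $\P^1$, and the residue scalars of the $N_l$ for $l\neq i$, living in $\C^*$ — altogether $(n-2)+(n-1)=2n-3$ coordinates — subject to the three scalar equations $\sum_lN_l=0$. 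Since the action is free, the slice is smooth of dimension $2(n-3)$, and these three equations can be solved for a suitable triple consisting of one position and two residue scalars, exhibiting the slice as an open subset $T$ of $\C^{n-3}\times(\C^*)^{n-3}$, the open conditions being that the eliminated residues remain nonzero and the eliminated position remains in its chart. Over $T$ I take the tautological family: the trivial bundle $\mathcal O^{\,2}$ on $\C\P^1\times T$ with the holomorphically varying flags, together with the $1$-form $\Phi$ with the prescribed, holomorphically varying, residues; since $\ell_i\neq\ell_j$ throughout and all residues are nonzero, every member is stable, so this is a holomorphic family and gives a map $f\colon T\to\mathcal H^n_0$. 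Letting $(i,j)$, the eliminated triple, and the affine chart used for each $\ell_l$ range over their finitely many possibilities yields the required finite collection.

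It remains to check that the images cover $\mathcal H^n_0$. Given a class with data $(\ell_i,N_i)$: by stability the $\ell_i$ are not all equal, so there is a pair with $\ell_i\neq\ell_j$; acting by $PGL(2,\C)$ brings the data into the corresponding slice, and choosing the eliminated triple and the affine charts compatibly with the actual values — possible because all $N_l$ are nonzero and, for $n\ge4$, enough indices survive — the class falls in the image of one of the families. I expect the main obstacle to be precisely the ``two-line'' configurations, where all flag lines lie on only two lines of $\P^1$ so that no three flags are distinct: these are stable by Remark~\ref{rem:stabilityHiggs} and so must be covered, yet the cleaner slices that would move three flags to $[1:0],[0:1],[1:1]$ cannot reach them. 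They are exactly the fixed loci $\mathcal Z_S$ (equivalently $Z_S$) of Theorem~\ref{thm:invfixedpoints}, and normalising with only two flags and absorbing the leftover torus into a residue is what handles them uniformly. The remaining points — that near every configuration $\sum_iN_i=0$ can indeed be solved for a triple of raw coordinates leaving an open piece of $\C^{n-3}\times(\C^*)^{n-3}$ (full rank of the relevant Jacobian being guaranteed by the freeness of the $K^{\C}$-action), and that finitely many affine-chart choices reach every flag position — are routine.
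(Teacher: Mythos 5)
Your proposal is correct and follows essentially the same route as the paper: fix the flag lines and residues $(\ell_i,N_i)$ modulo $PGL(2,\C)=SL(2,\C)/\{\pm\mathrm{Id}\}$, normalise two distinct flag lines to $[1:0]$ and $[0:1]$ and one residue scalar to $1$, and use the three scalar equations from $\sum_i N_i=0$ to eliminate one position and two residue scalars, leaving charts of the form $\C^{n-3}\times(\C^*)^{n-3}$ indexed by the finitely many choices of distinguished pair, affine charts, and eliminated triple. Your explicit remarks on the freeness of the $PGL(2,\C)$-action and on the ``two-line'' configurations are not needed as separate steps in the paper's argument but are consistent with it.
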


\begin{proof}
Let $\Sigma$ be $\C \P^1$ and consider a set of $n$ marked points $\{x_1,\ldots,x_n\}\neq \varnothing$ in $\Sigma$. Let $E$ be a stable rank-$2$ quasi-parabolic bundle over $\Sigma$ at $D$ for which the underlying holomorphic vector bundle is 
trivial (then
$
E\simeq \mathcal{O}(0)\oplus \mathcal{O}(0)$).

For a fixed rank-$2$ holomorphic trivial vector bundle $E$,  quasi-parabolic structures  as in \eqref{eq:rk2qpb} are parametrized by a product of flag manifolds 
$$
\prod_{i=1}^n (Iso(\C^2, E_{x_i,1})/B) \simeq \left(\C \P^1\right)^n,
$$
 where $Iso(\C^2, E_{x_1,1})$ is the set of linear isomorphisms from $\C^2$ to $E_{x_1,1}$, and $B$ is the parabolic subgroup of $SL(2,\C)$ of upper triangular matrices (which leave the standard full flag in $\C^2$ invariant). 
 
Moreover, the automorphism group of $E$ is $SL(2,\C)$ and, by Remark~\ref{rem:stable}, a point $(X_1,\ldots, X_n)\in \left(\C \P^1\right)^n$ represents a stable quasi-parabolic bundle if and only if 
$$\lvert \{ X_1,\ldots, X_n\} \rvert \geq 2.$$
 
% The possible Higgs fields on a quasi-parabolic bundle represented by 
% $$(X_1,\ldots, X_n)\in \left(\C \P^1\right)^n$$
% are determined by the collections of their residue matrices. These are traceless nilpotent  matrices $N_1,\ldots, N_n$ such that $N_i(E_{x_i,1}) = E_{x_1,2}$.  In particular, $E_{x_i,2}$ is the kernel of $N_i$. 

Considering disjoint index sets $A,B\subset \{1,\ldots,n\}$ such that $A\cup B= \{1,\ldots,n\}$, we can cover $\left(\C \P^1\right)^n$ by a finite number of coordinate charts
 $$
 U_{A,B} := \prod_{i\in A} \{[1:w_i]:\,\, w_i \in \C\} \times \prod_{i\in B}  \{[z_i:1]:\,\, z_i \in \C\} \simeq \C^{n}.
 $$
 
Now the possible Higgs fields on a quasi-parabolic bundle represented by a point in $U_{A,B}$ are determined by the collections of their residue matrices. These are traceless nilpotent  matrices $N_1,\ldots, N_n$ such that $N_i(E_{x_i,1}) = E_{x_i,2}$.  In particular, if the quasi-parabolic Higgs bundle is stable, $E_{x_i,2}$ is the kernel of $N_i$ (since, by stability, we have $N_i\neq 0$). Each of these matrices is of the form 
 \begin{equation}\label{eq_resmat}
 N_i=  \lambda_i \left(\begin{array}{lc} - w_i & 1 \\ -w_i^2 & w_i \end{array} \right), \quad \text{if  $i\in A$}  \quad \text{and} \quad  N_i=  \lambda_i \left(\begin{array}{lc} - z_i & z_i^2 \\ -1 & z_i \end{array} \right), \quad \text{if  $i\in B$},
 \end{equation}
with $\lambda_i\in \C^*$, when the quasi-parabolic bundle is represented by a point 
$$\prod_{i\in A} [1:w_i] \times \prod_{i\in B} [z_i:1] \in U_{A,B}.$$

Assuming, without loss of generality, that $X_1\neq X_2$, there is a unique element of $SL(2,\C)/\{ \pm Id\}$ that takes 
 $X_1$ to $[1:0]$,  $X_2$ to $[0:1]$ and 
$N_1$ to the matrix
$$ N_1= \left(\begin{array}{lc} 0 & 1\\ 0 & 0 \end{array} \right).$$
Note that this element of $SL(2,\C)/\{ \pm Id\}$ takes $N_2$ to a matrix of the form
$$ N_2= \lambda_2 \left(\begin{array}{cl} 0 & 0\\ - 1 & 0 \end{array} \right), \quad \text{with $\lambda_2\in \C^*$}.$$

Consequently, a point 
$$\left(([1:0],[0:1],X_3,\ldots,X_n), 1,\lambda_2,\ldots,\lambda_n\right)\in (\C \P^1)^n\times (\C^*)^n,$$ 
with $X_i=[1:w_i]$ for $i \in A$, $X_i=[z_i:1]$ for $i\in B$  and $\lambda_i\in \C^*$, represents an isomorphism class of a stable quasi-parabolic Higgs bundle whose underlying quasi-parabolic bundle is represented by  a point in $U_{A,B}$ with $1\in A$ and $2\in B$ (and $w_1=z_2=0$).

Since the sum of the residues must be zero, the $z_i,w_i,\lambda_i$ must satisfy the following compatibility conditions
 
% -----------
 
%  Taking a set of flags $(X_1,\ldots,X_n)  \in  \left(\C \P^1\right)^n$ and writing $X_i=[z_i:1]$ for $X_i\neq \infty$, we have that the residue matrices are of the form
% $$
% N_i= \lambda_i  \left(\begin{array}{lc} -z_i & z_i^2 \\ -1 & z_i \end{array} \right) \quad \text{if $X_i\neq[1:0]$} \quad \text{and}\quad  N_i= \lambda_i  \left(\begin{array}{lc} 0 & 1\\ 0 & 0 \end{array} \right)\quad  \text{otherwise}, 
% $$
%with $\lambda_i\in \C^*$ satisfying

%---------
\begin{align}\label{eq:sumzero}
& 1+ \sum_{i\in A\setminus\{1\}} \lambda_i + \sum_{i\in B\setminus\{2\}} z_i^2 \lambda_i  = 0 \\ \label{eq:sumzero2}
& \sum_{i\in A\setminus\{1\}} w_i \lambda_i + \sum_{i\in B\setminus\{2\}} z_i \lambda_i  = 0 \\ \label{eq:sumzero3}
& \sum_{i\in A\setminus\{1\}} w_i^2  \lambda_i +  \sum_{i\in B}  \lambda_i   = 0.
\end{align}

If $A\neq \{1\}$ then, by \eqref{eq:sumzero} and \eqref{eq:sumzero2}, we can take one $i_0 \in A\setminus \{1\}$ and determine
\begin{equation}\label{eq:i01}
\lambda_{i_0}=- \left(1 +  \sum_{i\in A\setminus\{1,i_0\}} \lambda_i + \sum_{i\in B\setminus\{2\}} z_i^2 \lambda_i\right) \neq 0
\end{equation} 
and 
\begin{equation}\label{eq:i02}
w_{i_0} = \frac{1}{1 +  \sum_{i\in A\setminus\{1,i_0\}} \lambda_i + \sum_{i\in B\setminus\{2\}} z_i^2 \lambda_i}  \left( \sum_{i\in A\setminus\{1,i_0\}} w_i \lambda_i + \sum_{i\in B\setminus\{2\}} z_i \lambda_i  \right).
\end{equation}
Moreover, $\lambda_2$ can be determined by $\eqref{eq:sumzero3}$. 

If $A= \{1\}$, then by  \eqref{eq:sumzero}, there exists $i_0 \in B\setminus \{2\}$ such that $z_{i_0}\neq 0$ and so we can determine $\lambda_{i_0}$ and $z_{i_0}$ from \eqref{eq:sumzero} and \eqref{eq:sumzero2}:
\begin{align}\label{eq:i03}
z_{i_0} & = \frac{1 + \sum_{i\in B\setminus\{2,i_0\}} z_i^2 \lambda_i}{ \sum_{i\in B\setminus\{2,i_0\}} z_i \lambda_i} \\
\label{eq:i04}
\lambda_{i_0} & = -\frac{\left( \sum_{i\in B\setminus\{2,i_0\}} z_i \lambda_i\right)^2}{1 + \sum_{i\in B\setminus\{2,i_0\}} z_i^2 \lambda_i}.
\end{align}
Again $\lambda_2$ can be determined by $\eqref{eq:sumzero3}$. 

Consequently, the quotient by $SL(2,\C)/\{ \pm Id\}$  of the set $H_{A,B}$ of trivial quasi-parabolic Higgs bundles with quasi-parabolic structures given by $ (X_1,\ldots,X_n) \in U_{A,B}$, such that
\begin{align} \label{eq:conditions1}
& \bullet \text{$X_1\neq X_2$}, \\ \label{eq:conditions2}
%\item $X_i=[1:w_i]$, for $i\in A$, $X_i=[z_i:1]$ for $i\in B$, with  $z_i,w_i\in \C$, such that
& \bullet \text{if $X_i=X_1$ then $i\in A$ and, if $X_i=X_2$ then $i\in B$ and} \\ \label{eq:conditions3}
& \bullet \text{Higgs field defined by residues of the form \eqref{eq_resmat} determined by} \\ \nonumber
& \text{$n$ nonzero complex numbers  $\lambda_1,\ldots,\lambda_n\in \C^*$ satisfying   \eqref{eq:sumzero},  \eqref{eq:sumzero2} and  \eqref{eq:sumzero3}},
\end{align} 
%\begin{align*}
%\{  (X_1,\ldots,X_n,\lambda_1,\ldots,\lambda_n)  & \in U_{A,B} \times (\C^*)^n :\,\, X_1\neq X_2\,\,   \text{with   $X_i=[1:w_i]$, for $i\in A$,}   \\ &  \text{$X_i=[z_i:1]$ for $i\in B$, and $z_i,w_i\in \C$,  $\lambda_i \in \C^*$ satisfying   \eqref{eq:sumzero}} \}
%\end{align*}
%by $SL(2,\C)/\{ \pm Id\}$ 
can be identified with
\begin{align}\label{eq:local}
 \{ ([1:0],[0:1], & X_3, \ldots, X_n,1,\lambda_2,\ldots, \lambda_n )   \in  U_{A,B} \times (\C^*)^n  : \\ \nonumber &  \text{$X_i=[1: w_i]$ if $i\in A \setminus \{1\}$, $X_i=[z_i:1]$ if $i\in B\setminus\{2\}$,}  \\ \nonumber & \text{with $z_i, w_i \in \C$ and  $\lambda_i \in \C^*$ satisfying   \eqref{eq:sumzero}, \eqref{eq:sumzero2} and  \eqref{eq:sumzero3},} \\ \nonumber
 \nonumber & \text{for $w_1=z_2=0$ and $\lambda_1=1$} \} \simeq \C^{n-3}\times (\C^*)^{n-3}.
\end{align}

\begin{rem}\label{rem:covering}
Considering injective functions $\sigma:\{1,2\} \to \{1,\ldots,n\}$ with $\sigma(1)<\sigma(2)$ and the sets 
$$U^\sigma_{A,B} = \{ (X_1,\ldots,X_n)\in (\C \P^1)^n: X_{\sigma(1)} \neq X_{\sigma(2)} \}$$
as well as the corresponding sets of stable quasi-parabolic Higgs bundles $H_{A,B}^\sigma$, we conclude that the set $H_0$ of stable rank-$2$ quasi-parabolic bundles over $\Sigma$ at $D$ for which the underlying holomorphic vector bundle is 
trivial is covered by 
$$
\bigcup_{\sigma} \,\, \bigcup_{A,B} \,\, H^\sigma_{A,B}. 
$$
\end{rem}

As we have seen before, the set of isomorphism classes of elements of each set  $H_{A,B}^\sigma$ can be identified with $\C^{n-3}\times (\C^*)^{n-3}$ and so,
by Remark~\ref{rem:covering}, we conclude that the space $\mathcal{H}^n_0$  of isomorphism classes of rank-$2$ quasi-parabolic Higgs bundles over $\Sigma$ at $D$ for which the underlying holomorphic vector bundle is
trivial, can be covered  by a finite number of copies of  $\C^{n-3}\times (\C^*)^{n-3}$. Moreover,  the patching maps among these sets are holomorphic. 
\end{proof}

\begin{rem}
 It is easy to check that $\mathcal{H}^n_0$ with the quotient topology is Hausdorff and so Proposition~\ref{prop:appprop1} shows that $\mathcal{H}^n_0$ has the structure of a complex manifold of (real) dimension $4(n-3)$ covered by finitely many open sets of the form  $\C^{n-3}\times (\C^*)^{n-3}$.
\end{rem}

Let $T$ be any complex manifold parametrizing a holomorphic family of rank-$2$ trivial holomorphic quasi-parabolic Higgs bundles over $\C \P^1$ at $D$ and consider the corresponding map $f:T \to  \mathcal{H}^n_0$ classifying  isomorphism classes of parametrized bundles. We will see that $f$ is holomorphic and so $\mathcal{H}^n_0$ is a moduli space.

%Let $T$ be a  complex manifold. A holomorphic family $(E_t,\phi_t)_{t\in T}$ of rank-$2$ trivial holomorphic quasi-parabolic Higgs bundles over $\C\P^1$ at $D=\{p_1,\ldots,p_n\}$ parametrized by $T$ is
%\begin{itemize}
%\item a holomorphic family of rank-$2$ quasi-parabolic vector bundles $\widetilde{E} \to \C\P^1 \times T$ para\-metrized by $T$ (see \cite{FS})\footnote{i.e a holomorphic rank-$2$ vector bundle $\widetilde{E} \to \C\P^1 \times T$ such that $\widetilde{E}_{\lvert_{ \{ p_i \}\times T}}  \simeq \varphi^* \widetilde{E} \to T$  is a rank-$2$ vector bundle over $T$ (where $\varphi_i:T\to \C\P^1 \times T$ is the map $\varphi_i(t)=(p_i,t)$ for  $i=1,\ldots,n$) and a filtration
%$$
%\varphi_i^* \widetilde{E} =  \widetilde{E}_{i,1}\supset  \widetilde{E}_{i,2}\supset \{0\}, 
%$$ 
%for each $i=1,\ldots,n$, where $\widetilde{E}_{i,2} \to T$ is a line subbundle of $\varphi^* \widetilde{E}$.};
%\item a holomorphic section $\Phi$ of $SQPar End(\widetilde{E}) \otimes \rho^* K_{\C \P^1} (D)$, where $\rho:\C\P^1 \times T \to \C\P^1$ is the projection map, such that
%$$
%\Phi\lvert_{\C\P^1\times\{t\}} = \phi_t, \quad \forall t\in T. 
%$$
%Note that $\Phi$ is an $End(\widetilde{E})$-valued $1$-form on $\C\P^1\setminus D$ as $\rho^* K_{\C\P^1}(D)\subset K_{\C\P^1\times T} (\rho^* D)$.
%\end{itemize}
%Given such a family we have a set theoretical map 
%\begin{align*}
%f: T & \to \mathcal{H}^n_0 \\
%t & \mapsto [(E_t,\phi_t)]
%\end{align*}
%classifying isomorphism classes of parametrized bundles. We will see that $f$ is holomorphic and so $\mathcal{H}^n_0$ is a moduli space.

\begin{proposition} Given a complex manifold $T$ parametrizing a holomorphic family $(E_t,\phi_t)_{t\in T}$ of stable rank-$2$ trivial holomorphic quasi-parabolic Higgs bundles over $\C \P^1$ at $D$, the classifying map $f$  is holomorphic.
%For each open set $\C^{n-3}\times (\C^*)^{n-3}$ in Proposition~\ref{prop:appprop1} the map $f$ produces a holomorphic map from $T$ to the open set.
\end{proposition}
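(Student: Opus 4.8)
The plan is to establish holomorphy of $f$ locally, near an arbitrary point $t_0\in T$, since $f$ is already a well-defined set-theoretic map and holomorphy is a local condition. The first step is to rigidify the underlying family of vector bundles: every fibre $E_t$ is holomorphically trivial and $H^1(\C\P^1,End(\mathcal{O}\oplus\mathcal{O}))=0$, so by base change (Grauert's theorem) $\rho_*\widetilde E$ is locally free of rank two near $t_0$ and its formation commutes with restriction to fibres; shrinking $T$ to a small neighbourhood of $t_0$ we may therefore assume $\widetilde E\simeq(\C\P^1\times\C^2)\times T$ holomorphically. This is exactly the local-triviality step carried out for quasi-parabolic vector bundles in \cite{FS}. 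In such a trivialization the flag line subbundles $\widetilde E_{i,2}\to T$ become holomorphic maps $X_i\colon T\to\C\P^1$, with $X_i(t)$ the $1$-dimensional flag element of $E_t$ at $x_i$, while the Higgs field $\Phi$ becomes a holomorphic $\mathfrak{sl}(2,\C)$-valued meromorphic $1$-form on $\C\P^1$ with simple poles along $D$ varying holomorphically with $t$. In particular the residues $N_i(t):=\operatorname{Res}_{x_i}\phi_t$ depend holomorphically on $t$, and since each is traceless, nilpotent, and has kernel $X_i(t)$, it has the form \eqref{eq_resmat} with a nowhere-vanishing holomorphic coefficient $\lambda_i(t)\in\C^*$.

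Next I would feed this local data into one of the coordinate charts of $\mathcal{H}_0^n$ built in the proof of Proposition~\ref{prop:appprop1}. By stability (Remark~\ref{rem:stable}) at least two of the lines $X_i(t_0)$ are distinct, say $X_{\sigma(1)}(t_0)\neq X_{\sigma(2)}(t_0)$ for an injective $\sigma\colon\{1,2\}\to\{1,\dots,n\}$ with $\sigma(1)<\sigma(2)$; by continuity this persists for all $t$ in a smaller neighbourhood of $t_0$, so $t\mapsto(X_1(t),\dots,X_n(t))$ takes values in $U^\sigma_{A,B}$ for a locally constant admissible partition $A\sqcup B=\{1,\dots,n\}$ (put $\sigma(1)\in A$, $\sigma(2)\in B$, and assign each remaining $i$ to $A$ or $B$ according as $X_i(t_0)$ stays away from $[0:1]$ or from $[1:0]$ after normalization, arbitrarily but consistently if both). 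There is then a unique $g(t)\in SL(2,\C)/\{\pm\mathrm{Id}\}$ carrying $X_{\sigma(1)}(t)$ to $[1:0]$, $X_{\sigma(2)}(t)$ to $[0:1]$, and normalizing $N_{\sigma(1)}(t)$ to the standard rank-one nilpotent form used in the proof of Proposition~\ref{prop:appprop1}; because the action of $SL(2,\C)/\{\pm\mathrm{Id}\}$ on flags and by conjugation on residues is algebraic and $g(t)$ is given by explicit rational expressions in the homogeneous coordinates of the $X_i(t)$ and the entries of $N_{\sigma(1)}(t)$ (all holomorphic, with the relevant nonvanishing guaranteed on our neighbourhood), $t\mapsto g(t)$ is holomorphic. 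Applying $g(t)$ then yields holomorphic functions $w_i(t)$, $z_i(t)$ and $\lambda_i(t)\in\C^*$ satisfying the compatibility relations \eqref{eq:sumzero}, \eqref{eq:sumzero2}, \eqref{eq:sumzero3} (after shrinking $T$ once more, if necessary, so that the denominators appearing in the proof of Proposition~\ref{prop:appprop1} do not vanish), and these are precisely the coordinates of $f(t)$ in the chart \eqref{eq:local}. Hence $f$ is holomorphic near $t_0$; as $t_0$ was arbitrary, $f$ is holomorphic, and $\mathcal{H}_0^n$ is a moduli space.

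The main obstacle is this first step — the local holomorphic triviality of $\widetilde E$ over $T$ — which rests on the rigidity and unobstructedness of $\mathcal{O}\oplus\mathcal{O}$ on $\C\P^1$ and is taken over from Furuta and Steer's treatment of quasi-parabolic vector bundles in \cite{FS}. The only genuinely new point is that the Higgs field $\Phi$, being a holomorphic section of $SQPar\,End(\widetilde E)\otimes\rho^*K_{\C\P^1}(D)$, transforms holomorphically both under the trivializing frame and under the subsequent $SL(2,\C)/\{\pm\mathrm{Id}\}$-normalization, so that the residues $N_i$ — and hence the chart coordinates $\lambda_i$ — vary holomorphically; the rest is the bookkeeping already assembled in Proposition~\ref{prop:appprop1} and Remark~\ref{rem:covering}.
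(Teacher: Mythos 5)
Your proposal is correct and follows essentially the same route as the paper: trivialize $\widetilde E$ locally over $T$ via the rank-$2$ locally free direct image sheaf (the paper computes $\dim H^0(\C\P^1,E_t)=2$ directly where you invoke Grauert/base change, but the content is identical), read off holomorphic flag maps $X_i(t)$ and residue coefficients $\lambda_i(t)$, and compose with the chart's $SL(2,\C)/\{\pm\mathrm{Id}\}$-normalization, which is holomorphic because $g(t)$ is rational in the holomorphic data. No substantive differences to report.
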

\begin{proof} 

Since $\widetilde{E}$ is a rank-$2$ vector bundle over $\C\P^1\times T$, we know that $\mathcal{O}(\widetilde{E})$ is a rank-$2$ locally free sheaf over $\C\P^1\times T$.
Let $\pi:\C\P^1 \times T \to T$ be the projection map and let
$$
(\C\P^1\times T)_t:=\pi^{-1}(t) = \C \P^1\times \{t\} \quad \text{and}\quad \mathcal{O}(\widetilde{E})_t:=\mathcal{O}(\widetilde{E})\lvert_{(\C \P^1\times T)_t}.
$$
Then
$$
\dim_{\C} H^0 ((\C\P^1\times T)_t,\mathcal{O}(\widetilde{E})_t) = \dim_{\C} H^0 (\C\P^1\times \{t\}, \mathcal{O} (\widetilde{E})\lvert_{\C \P^1\times\{t\}})= 2
$$
(as $\mathcal{O} (\widetilde{E})\lvert_{\C \P^1\times\{t\}}=\mathcal{O}(\widetilde{E}\lvert_{\C\P^1\times \{t\}})$ and $\widetilde{E}_{\lvert_{\C \P^1\times\{t\}}}\simeq \C\P^1\times \C^2$).

Consequently,  the sheaf $R^0\pi_*(\widetilde{E})$  over $T$ defined by
$$
R^0\pi_*(\widetilde{E})(U) = H^0(\pi^{-1}(U),\widetilde{E}),
$$
for open Zariski open subsets $U$ of $T$, is a rank-$2$  locally free sheaf and so there is an open cover $\{U_i\}_i$ of $T$ such that
$$
R^0\pi_*(\widetilde{E})\lvert_{U_i} \simeq \mathcal{O}_T(U_i) \oplus  \mathcal{O}_T(U_i) \simeq U_i\times \C^2.
$$ 
This gives a trivialization of $R^0\pi_*(\widetilde{E})$ on $U_i$
$$
\varphi_i: R^0\pi_*(\widetilde{E})\lvert_{U_i} \to U_i\times \C^2,
$$ 
which, in turn, gives a trivialization of  $\widetilde{E}$ on the product $\C\P^1\times U_i$,
$$
\psi_i: \widetilde{E}\lvert_{\C\P^1\times U_i} \to (\C\P^1\times U_i) \times \C^2.
$$
Indeed, 
$$
R^0\pi_*(\widetilde{E})(U_i) = H^0(\pi^{-1}(U_i),\widetilde{E}) = H^0(\C\P^1 \times U_i, \widetilde{E})
$$
and, since $\varphi_i$ is a trivialization, we can define 
\begin{align*}
\varphi_i^{-1}: U_i\times \C^2 & \to R^0\pi_*(\widetilde{E}_i)\lvert_{U_i}  \\
(t,(z,w)) &\mapsto  \varphi_i^{-1}(t,(z,w)): \C \P^1 \times U_i \to \widetilde{E}\lvert_{\C\P^1\times U_i},
\end{align*}
as well as the isomorphism
\begin{align*}
\psi_i^{-1}:(\C\P^1 \times U_i) \times \C^2 & \to \widetilde{E}\lvert_{\C\P^1\times U_i} \\
((p,t),(z,w)) &\mapsto  \varphi_i^{-1}(t,(z,w))(p,t).
\end{align*}
Hence, there exists an open subset $U$ of $T$ such that $\widetilde{E}\lvert_{\C\P^1\times U} \simeq (\C\P^1\times U) \times \C^2$
%we can assume, without loss of generality, that $\widetilde{E}= (\C\P^1 \times T) \times \C^2$. 
and this set $U$ parametrizes holomorphically trivial stable quasi-parabolic Higgs bundles over $\C \P^1$ at $D$. In particular, we have a map
\begin{align*}
\widetilde{f}: U & \to (\C \P^1)^n \times (\C^*)^n\\
\widetilde{f} & \mapsto (X_1(t),\ldots, X_n(t), \lambda_1(t),\ldots, \lambda_n(t))
\end{align*}
with 
$$
\widetilde{f}(U) \subseteq \{(X_1,\ldots, X_n, \lambda_1,\ldots, \lambda_n)\in (\C \P^1)^n \times (\C^*)^n : \,\, \lvert \{ X_1,\ldots, X_n\} \rvert \geq 2\}.
$$

Assuming, without loss of generality, that for each $t\in U$ we have $X_1(t)\neq X_2(t)$ and $(X_1(t),\ldots, X_n(t))\in U_{A,B}$ for some $A,B$ satisfying \eqref{eq:conditions1}, \eqref{eq:conditions2} and \eqref{eq:conditions3},  it is enough to show that the quotient map
from the set $H_{A,B}$ 
%of trivial quasi-parabolic Higgs bundles with quasi-parabolic structures given by $ (X_1,\ldots,X_n) \in U_{A,B}$, with $X_1\neq X_2$,   $X_i=[1:w_i]$, for $i\in A$, $X_i=[z_i:1]$ for $i\in B$ with  $z_i,w_i\in \C$, and Higgs field defined by residue matrices  $N_i$ of the form \eqref{eq_resmat} determined by the $n$ nonzero complex numbers  $\lambda_1,\ldots,\lambda_n\in \C^*$ satisfying   \eqref{eq:sumzero},
%\begin{align*}
%\{  & (X_1,\ldots,X_n,\lambda_1,\ldots,\lambda_n)   \in U_{A,B} \times (\C^*)^n :\,\, X_1\neq X_2\,\,   \text{with   $X_i=[1:w_i]$, for $i\in A$,}   \\ &  \text{$X_i=[z_i:1]$ for $i\in B$, and $z_i,w_i\in \C$,  $\lambda_i \in \C^*$ satisfying   \eqref{eq:sumzero}, \eqref{eq:sumzero} and \eqref{eq:sumzero3}} \} \to \C^{n-3}\times (\C^*)^{n-3}
%\end{align*}
which sends $(X_1,\ldots,X_n)$ to 
$$(gX_3,\ldots,\ldots,\widehat{gX_{i_0}},\ldots, gX_n)$$ and $(N_1,\ldots, N_n)$ to $(g N_3 g^{-1},\ldots,\widehat{g N_{i_0} g^{-1}},\ldots, g N_n g^{-1})$, with  $gX_1=[1:0]$, $g X_2= [0:1]$, 
$$g N_1 g^{-1} = \left( \begin{array}{cc} 0 & 1 \\ 0 & 0 \end{array} \right) \quad \text{and}\quad g N_2 g^{-1} = \widetilde{\lambda}_2 \left( \begin{array}{rc}  0 & 0 \\ -1 & 0 \end{array} \right)
$$  
with $\widetilde{\lambda}_2$ determined by $\eqref{eq:sumzero3}$
(and consequently sends $(\lambda_1,\ldots,\lambda_n)$ to 
$$(g\cdot \lambda_3,\ldots,\widehat{g \cdot  \lambda_{i_0}},\ldots,g \cdot\lambda_n)$$ 
with $g \cdot\lambda_1=1$ and $g \cdot\lambda_2= \widetilde{\lambda}_2 = (1-w_1z_2)^2 \lambda_1 \lambda_2$)  is holomorphic.

Here 
$$
g\cdot \lambda_i=\frac{ (1-z_2w_i)^2}{\lambda_1(1-z_2w_1)^2} \, \lambda_i = \left(g \left(\begin{array}{c} 1 \\ w_i \end{array}\right)_1\right)^2 \lambda_i, \quad \text{if $i \in A$, where $w_i$ is such that $X_i=[1:w_i]$},
$$
$$
g\cdot \lambda_i= \lambda_1(1-w_1z_i)^2 \,\lambda_i = \left(g \left(\begin{array}{c} z_i \\ 1 \end{array}\right)_2\right)^2 \lambda_i, \quad \text{if $i \in B$, where $z_i$ is such that $X_i=[z_i:1]$},
$$
and $i_0$ is the index used in \eqref{eq:i01},  \eqref{eq:i02},  \eqref{eq:i03} and  \eqref{eq:i04}. The corresponding values of $X_{i_0}$ and $\lambda_{i_0}$ are determined by the equations \eqref{eq:sumzero}, \eqref{eq:sumzero2} and  \eqref{eq:sumzero3}, a consequence of the fact that the sum of the residues is zero.

The quotient map is the composition $h_3 \circ h_2\circ h_1$ of maps
\begin{align*}
(X_1,\ldots, X_n, \lambda_1,\ldots, \lambda_n ) & \stackrel{h_1}{\mapsto}  \left( [1:0],[0:1],  gX_3, \ldots, gX_n, 1, (1-w_1z_2)^2 \lambda_1 \lambda_2, g\cdot \lambda_3, \ldots, g \cdot \lambda_n \right), 
\end{align*}
\begin{align*}
(X_1,\ldots, X_n, \lambda_1,\ldots, \lambda_n ) & \stackrel{h_2}{\mapsto}  (X_3, \ldots, X_n,\lambda_2,\ldots, \lambda_n )  
\end{align*}
and
\begin{align*}
(X_3,\ldots, X_n, \lambda_2,\ldots, \lambda_n ) & \stackrel{h_3}{\mapsto}  (a_3, \ldots, \hat{a}_{i_0},\ldots, a_n,\lambda_3,\ldots, \hat{\lambda}_{i_0},\ldots  \lambda_n )  \in \C^{n-3}\times (\C^*)^{n-3}
\end{align*}
where, for $i\in A$, we have $X_i=[1:w_i]$ and $a_i=w_i$ and, for $i\in B$, we have $X_i=[z_1:1]$ and $a_i=z_i$. Moreover, $f_{\lvert_{U}} =  h_3 \circ h_2\circ h_1\circ \widetilde{f}$.
%and $i_0$ is the index used in \eqref{eq:i01},  \eqref{eq:i02},  \eqref{eq:i03} and  \eqref{eq:i04}. The corresponding values of $X_{i_0}$ and $\lambda_{i_0}$ are determined by the equations \eqref{eq:sumzero}, \eqref{eq:sumzero2} and  \eqref{eq:sumzero3}, a consequence of the fact that the sum of the residues is zero.

Note that the map $h_1$ is holomorphic since the map $g$ is holomorphic and depends holomorphicaly on $X_1$, $X_2$ and $\lambda_1$:
%$$
%g= \pm  \left( \begin{array}{cc} \displaystyle{\frac{w_2}{w_2z_1-w_1z_2}} & \displaystyle{-\frac{z_2}{ w_2z_1 - w_1z_2}} \\  \\ - \lambda_1 w_1&   \lambda_1 z_1\end{array} \right). 
%$$

$$
g= \pm  \left( \begin{array}{cc} \displaystyle{\frac{1}{  \lambda_1^{1/2} (1-w_1z_2)} }&  \displaystyle{-\frac{z_2 }{\lambda_1^{1/2} (1-w_1z_2)} }\\ \\ - \lambda_1^{1/2} w_1&   \lambda_1^{1/2} \end{array} \right) \in SL(2,\C)/\{ \pm Id\}. 
$$
\end{proof}


\begin{thebibliography}{ZZZZZ}

%\bibitem[BV]{BV} N.L. Balazs and A. Voros, \emph{Chaos on the pseudosphere}, 
%Phys. Rep. {\bf 143} (1986), 109--240.

\bibitem[BFGM]{BFGM} I. Biswas, C. Florentino, L. Godinho and A. Mandini, \emph{Polygons in Minkowski three space and parabolic Higgs bundles of rank $2$ on $\C \P^1$}, Transform. Groups {\bf 18} (2013),  995--1018.


\bibitem[BGH]{BGH} I. Biswas, \'{O}. Garc\'ia-Prada and J. Hurtubise, \emph{Higgs bundles, branes and Langlands duality}, Comm. Math. Phys. {\bf 365}  (2019), 1005--1018.

\bibitem[BGM]{BGM} O. Biquard, \'{O}. Garc\'ia-Prada and I. Mundet i Riera, \emph{Parabolic Higgs bundles and representations of the fundamental group of a punctured surface into a real group}, arXiv:1510.04207 (2015).

%\bibitem[BY]{BY} U. Boden and K. Yokogawa, \emph{Moduli spaces of parabolic 
%Higgs bundles and parabolic $K(D)$ pairs over smooth curves: I}, Inter. Jour. 
%Math. {\bf 7} (1996), 573--598.

%\bibitem[D]{D} S. K. Donaldson, \emph{Twisted harmonic maps and the self-duality equations}, Proc. London Math. Soc. (3) {\bf 55} (1987), %127--131.

\bibitem[DP]{DP} R. Donagi and T. Pantev, \emph{Langlands duality for Hitchin systems}, Invent. Math. {\bf 189} (2012), 653--735.

\bibitem[F]{F} P. Foth, \emph{Polygons in Minkowski space and the 
Gelfand-Tseltin method for pseudo-unitary groups}, Jour. Geom. 
Phy. {\bf 58} (2008), 825--832.


\bibitem[FS]{FS} M. Furuta and B. Steer, \emph{Seifert Fibred Homology $3$-Spheres and the Yang-Mills Equations on Riemann Surfaces with Marked Points},  Adv. Math. {\bf 96} (1992), 38-102.
 


\bibitem[GM]{GM} L. Godinho and A. Mandini, \emph{Hyperpolygon spaces
and moduli spaces of parabolic Higgs bundles}, Adv. Math. {\bf 244} (2013), 465-532.

\bibitem[G-P1]{G-P} O. Garc\'ia-Prada, Higgs bundles and surface group representations,  \emph{Moduli spaces and vector bundles}, 265--310, \emph{London Math. Soc. Lecture Note Ser.}, {\bf 359}, Cambridge Univ. Press, Cambridge, (2009). 

\bibitem[G-P2]{G-P2} O. Garc\'ia-Prada, Involutions of the moduli space of $SL(n, \C)$-Higgs bundles and real forms, In Vector
Bundles and Low Codimensional Subvarieties: State of the Art and Recent Developments, Quaderni di Matematica, Editors: G. Casnati, F. Catanese and R. Notari (2007).

\bibitem[Hi1]{Hi1} N.J. Hitchin, \emph{Self-duality equations on a Riemann surface}, 
Proc. London Math. Soc. (3) {\bf 55} (1987), 59--126.

\bibitem[Hi2]{Hi2} N.J. Hitchin, \emph{Lie groups and Teichm\"uller space}, Topology {\bf 31} (1992), 449--473.

%\bibitem[Hi2]{H} N.J. Hitchin, \emph{Hyper-K\"{a}hler manifolds}, S\'{e}minaire 
%Bourbaki, Ast\'{e}risque {\bf 206} (1992), 137--166.

%\bibitem[Hi3]{Hi3} N.J. Hitchin, \emph{Lie groups and Teichm\"uller space}, Topology {\bf 31} (1992), 449--473.



%\bibitem[HP]{HP} M. Harada and N. Proudfoot, \emph{Hyperpolygon spaces and their 
%cores}, Trans. Amer. Math. Soc. {\bf 357} (2005), 1445--1467.

\bibitem[HS]{HS} S. Helmke and P. Slodowy \emph{Singular elements of affine Kac-Moody groups}, European Congress of Mathematics,  155--172, Eur. Math. Soc., Z\"{u}rich, 2005.


\bibitem[HT]{HT} T. Hausel and M. Thaddeus, \emph{Mirror symmetry, Langlands duality and the Hitchin system}, Invent. Math. {\bf 153} (2003), 197--229.



%\bibitem[K1]{K1} H. Konno, \emph{Construction of the moduli space of stable 
%parabolic Higgs bundles on a Riemann surface}, Jour. Math. Soc. Japan {\bf 45} 
%(1993), 253--276.

\bibitem[K]{K2} H. Konno, \emph{On the cohomology ring of the hyperk\"{a}hler 
analogue of polygon spaces}, Integrable systems, topology and physics (Tokyo, 
2000), 129--149, Contemp. Math., {\bf 309}, Amer. Math. Soc., Providence, RI, 
2002. 


\bibitem[KN]{KN}  G. Kempf  and L. Ness, \emph{The length of vectors in representation spaces. In Algebraic Geometry, Proceedings, Copenhagen, 1978}, Berlin-Heidelberg-New York, Springer-Verlag (1979), 233--243.

\bibitem[KW]{KW} A. Kapustin and E. Witten, \emph{Electric-magnetic duality and the geometric Langlands progeram}, Commun. Number Th. Phys. {\bf 1}  (2007), 1--236.

\bibitem[M]{M} S. Mukai, An introduction to invariants and moduli.  \emph{Cambridge Studies in Advanced Mathematics}, {\bf 81}. Cambridge University Press, Cambridge, 2003. 

\bibitem[Mu]{Mu} I. Mundet i Riera, \emph{Parabolic Higgs bundles for real reductive Lie groups}, in Geometry and Physics: Volume II: A Festschrift in honour of Nigel Hitchen, Editors: A. Dancer, J. E. Andersen and \'O. Garc\'{\i}a-Prada, Oxford Scholardhip Online (2018).


\bibitem[MFK]{MFK} D. Mumford, J. Fogarty and F. Kirwan, \emph{Geometric Invariant Theory}, 3rd edition, Berlin, New York, Springer-Verlag, (1994).


\bibitem[MS]{MS} V. B. Mehta and C.S.  Seshadri, \emph{Moduli of vector bundles on curves with parabolic structures}, 
Math. Ann. {\bf 248} (1980),  205--239. 

%\bibitem[Na1]{N1} H. Nakajima, \emph{Hyper-K\"{a}ler structures on moduli spaces 
%of parabolic Higgs bundles on Riemannian surfaces}, Moduli of vector bundles
%(Ed. M. Maruyama), 199--208, Lect. Notes Pure Appl. Math. 179,
%Marcel Dekker, 1996.
%
%\bibitem[Na2]{N2} H. Nakajima, \emph{Varieties associated with quivers}, 
%Representation theory of algebras and related topics (Mexico City, 1994), 
%139--157, CMS Conf. Proc., {\bf 19}, Amer. Math. Soc. Providence, RI, 1996. 
%
%\bibitem[Na3]{N3} H. Nakajima, \emph{Quiver varieties and Kac-Moody algebras}, 
%Duke Math. Jour. {\bf 91} (1998), 515--560. 

\bibitem[OV]{OV} A. L. Onishchik  and  \`E. B. Vinberg, Lie groups and algebraic groups, Springer Series in Soviet Mathematics, Springer-Verlag, Berlin, (1990).

%\bibitem[Se]{S} C.S. Seshadri, \emph{Fibr\'{e}s vectoriels sur les courbes 
%alg\'{e}briques}, Ast\'{e}risque {\bf 96} (1982). 

\bibitem[S]{Sa} D. Salamon, Notes on complex Lie groups, preprint (2018).

%\bibitem[Sim]{Sim} Simon, B, \emph{Orthogonal polynomials on the unit circle. Part 2. Spectral theory}, American Mathematical Society %Colloquium Publications, {\bf 54}, American Mathematical Society, Providence, RI, (2005).

\bibitem[Si1]{Si1} C.T. Simpson, \emph{Constructing variations of hodge structure using Yang-Mills theory and applications to uniformization}, 
Jour. Amer. Math. Soc. {\bf 1} (1988), 867--918.

\bibitem[Si2]{Si2} C.T. Simpson, \emph{Harmonic bundles on noncompact curves}, 
Jour. Amer. Math. Soc. {\bf 3} (1990), 713--770.

\bibitem[SYZ]{SYZ} A. Strominger, S.-T. Yau and E. Zaslow, \emph{Mirror symmetry is T-duality}, Nuclear Phys. B,  {\bf 479} (1996), 243--259.


%\bibitem[V]{V} Vu The Khoi, \emph{On the symplectic volume of the moduli space of spherical and Euclidean polygons}, 
%Kodai Math. J. {\bf 28} (2005), 199--208.

%\bibitem[Z]{Z} Zilleri, \emph{Lie groups, representation theory and symmetric spaces}, ....


\end{thebibliography}
\end{document}